\numberwithin{equation}{section}
\numberwithin{figure}{section}
\theoremstyle{plain}
\newtheorem{thm}{\protect\theoremname}[section]
\DeclareMathOperator*{\Ex}{\mathbb{E}}
\DeclareMathOperator*{\Pro}{\mathbb{P}}
\newtheorem{cor}[thm]{Corollary}
\newtheorem*{thm*}{\protect\theoremname}
\theoremstyle{definition}
\newtheorem{problem}[thm]{\protect\problemname}
\newtheorem*{problem*}{Problem}
\theoremstyle{remark}
\newtheorem*{rem*}{\protect\remarkname}
\theoremstyle{remark}
\newtheorem{rem}[thm]{\protect\remarkname}
\theoremstyle{definition}
\newtheorem{defn}[thm]{\protect\definitionname}
\theoremstyle{plain}
\newtheorem{prop}[thm]{\protect\propositionname}
\theoremstyle{plain}
\newtheorem{fact}[thm]{\protect\factname}
\theoremstyle{definition}
\newtheorem{example}[thm]{\protect\examplename}
\theoremstyle{plain}
\newtheorem{lem}[thm]{\protect\lemmaname}
\theoremstyle{plain}
\newtheorem{claim}[thm]{Claim}
\theoremstyle{plain}
\let\originalleft\left
\let\originalright\right
\renewcommand{\left}{\mathopen{}\mathclose\bgroup\originalleft}
\renewcommand{\right}{\aftergroup\egroup\originalright}
   \providecommand{\fg}{\ifdim\lastskip>\z@\unskip\fi~\frqq}%
\providecommand{\definitionname}{Definition}
\providecommand{\factname}{Fact}
\providecommand{\lemmaname}{Lemma}
\providecommand{\problemname}{Problem}
\providecommand{\propositionname}{Proposition}
\providecommand{\remarkname}{Remark}
\providecommand{\theoremname}{Theorem}
\providecommand{\examplename}{Example}
\newcommand{\eps}{\varepsilon}
\newcommand{\Sym}{{\rm Sym}}
\newcommand{\Id}{{\rm Id}}
\newcommand{\FF}{\mathbb{F}}
\newcommand{\cA}{\mathcal{A}}
\newcommand{\cC}{\mathcal{C}}
\newcommand{\cD}{\mathcal{D}}
\newcommand{\cG}{\mathcal{G}}
\newcommand{\cX}{\mathcal{X}}
\newcommand{\cY}{\mathcal{Y}}
\newcommand{\cZ}{\mathcal{Z}}
\newcommand{\cN}{\mathcal{N}}
\title[Cocycle stability of random simplicial complexes]{Cocycle stability in permutations of random simplicial complexes}
\author[M.\ Chapman]{Michael Chapman}
\address{Michael Chapman\hfill\break
	 Institute for Advanced Study\hfill\break
	1 Einstein Dr, Princeton, NJ 08540, USA.}
\email{mchapman@ias.edu}
\author[Y.\ Peled]{Yuval Peled}
\address{Yuval Peled\hfill\break
	Einstein Institute of Mathematics\hfill\break
	The Hebrew University, Jerusalem 91904, Israel.}
\email{yuval.peled@mail.huji.ac.il}
\begin{document}

\begin{abstract}
    Finding a non-sofic hyperbolic group will resolve two major problems in geometric group theory: Are there non sofic groups? Are there non residually finite hyperbolic groups? 
    
    In this paper,  we propose a new probabilistic approach to this problem, based on the \emph{cocycle stability in permutations} of \emph{random $2$-dimensional Linial--Meshulam complexes}. Specifically, we study their cocycle stability rate, which measures how far cochains with small coboundaries are from being cocycles. 
      Our main contribution is the following: If, in a middle triangle density range, these random complexes typically have a linear cocycle stability rate, then there exists a non-sofic hyperbolic group.  Our proof method is inspired by a well known fact about the non local testability of Sipser--Spielman expander codes. 
\end{abstract}

\maketitle

\section{\textbf{Introduction}}

The study of random graphs was initiated by  Erd{\H{o}}s and R{\'e}nyi~\cite{erdHos1960evolution}. They defined the  model $G(n,p)$, where $n\in \mathbb{N}$ and $p\in [0,1]$, which now bares their names. A graph $\cG\sim G(n,p)$ has  $[n]=\{1,...,n\}$ as a vertex set, and an edge $xy$ is included in $\cG$ with probability $p$ independently of all other edges. The problems  Erd{\H{o}}s and R{\'e}nyi studied in their paper revolve around its connectedness, and in particular they have shown that the transition from disconnected almost surely to connected almost surely happens at the threshold $p=\frac{\log n}{n}$.

In the last couple of decades, the field of \emph{high dimensional combinatorics}, and in particular \emph{high dimensional expansion}, developed significantly. A seminal work which initiated much of this progress is due to Linial and Meshulam \cite{linial_meshulam2006homological}, who generalized the $G(n,p)$ model to higher dimensions.\footnote{This is one of several models for random complexes that have been studied (see \cite{kahle2016random} and the references therein).}
In the $2$-dimensional case, a simplicial complex $\cY\sim Y(n,p)$ has $[n]$ as a vertex set, a complete $1$-dimensional skeleton (namely, all possible edges), and every $2$-dimensional face (i.e., triangle) appears independently with probability $p$. 
Similar to  Erd{\H{o}}s and R{\'e}nyi, Linial and Meshulam studied the expected \emph{high dimensional connectivity} of such a random complex.
A graph is connected if and only if its $0^{\rm th}$ (reduced) cohomology with $\FF_2$-coefficients is trivial (all cohomological terms will be formally defined in Section \ref{sec:prelims}). 
Thus, it is natural to ask for which $n$ and $p$ does one expect the $1^{\rm st}$ cohomology of $\cY\sim Y(n,p)$ with $\FF_2$-coefficients to vanish. Linial and Meshulam showed that this occurs at $p=\frac{2\log n}{n}$. To that end, they have shown that the following property, later termed \emph{coboundary expansion with $\FF_2$-coefficients}, holds when $p$ is above the threshold: If a $1$-cochain $\alpha$ of $\cY$ with $\FF_2$-coefficients has a ``small'' coboundary $\delta \alpha$, then $\alpha$ is ``close'' to a $1$-coboundary. This property is seen today as a version of high dimensional expansion, and is both elusive and powerful (cf.\ \cite{gromov2010singularities,kaufman2016isoperimetric,evra2016bounded,dikstein2023coboundary,CL_part2,dikstein2024low,bafna2025quasi,bafna2025constant}).  Results of this form were later extended to cohomologies over other rings, to higher dimensions, and to quantitative bounds on the coboundary expansion~\cites{meshulam2009homological,hoffman2017threshold,kahle2021spectral,LuP,newman2023integer}.

One can define the following notion which generalizes the above: The \emph{cocycle stability rate} of a $2$-dimensinoal simplicial complex $\cY$ measures how far are $1$-cochains with coefficients in a metric group $(\Gamma,d)$  are from being cocycles,\footnote{Note that here we measure the distance from the cocycles and not the coboundaries as Linial and Meshulam did. This is more natural in our context.} as a function of the norm of their coboundary. 
When this rate is linear, the coefficient in the rate is an immediate generalization of the well studied Cheeger constant, which measures the edge expansion of a graph (cf.\ \cite{Hoory_Linial_Wigderson}). 
In \cites{CL_part1,CL_part2} Lubotzky and the first author initiated the study of the cocycle stability rate of  complexes with \emph{permutation coefficients equipped with the Hamming metric},\footnote{Dinur and Meshulam \cite{Dinur-Meshulam} studied this concept even before Lubotzky and the first author, but with respect to the discrete metric instead of the Hamming metric.} and defined the appropriate Cheeger constant $h_1(\cY,\Sym)$ in this context (see Section \ref{sec:prelims} for the precise definitions). 
In this paper, we take Linial and Meshulam's approach one step further, and study $h_1(\cY,\Sym)$ for complexes in their model.

To describe our main contribution, we need to recall the work of  Babson, Hoffman and Kahle ~\cite{babson2011fundamental}. They showed that for every constant $\eta>0$, if $p=n^{-1/2-\eta}$, then the fundamental group of $\cY\sim Y(n,p)$ is asymptotically almost surely (a.a.s.) hyperbolic and non-trivial, and that the exponent $-1/2$ is sharp. The upper bound was later improved to $p=O(n^{-1/2})$~\cite{luria2022simple}. In particular, there exists a vast \emph{mid-range} $p=n^{-1+\eta}$, $0<\eta<1/2$, in which $\cY$ a.a.s.\ has a non-trivial fundamental group and a trivial first-homology group. In this mid-range we prove the following:

\begin{thm}\label{thm:midrange}
     Let $0<\eta<\nicefrac{1}{2}$, $p=n^{-1+\eta}$ and $\cY\sim Y(n,p)$. Then, if a.a.s.\ $h_1(\cY,\Sym)= \omega(n^{-3}p^{-1})$,\footnote{We use the standard Bachmann--Landau asymptotic notation  $O,o,\Omega,\omega$ and $\Theta$.} then the fundamental group of such a complex is a.a.s.\ a non-sofic hyperbolic group.
\end{thm}
    Since every residually finite group is sofic, the above theorem opens a path towards two well known open problems: (i)
        Find a non-residually finite hyperbolic group \cite{kapovich2000equivalence}, and (ii)
         Find a non-sofic group \cites{Gromov_sofic,Weiss_Sofic}.

\begin{rem}
    Dogon \cite{dogon2023flexible} proved a beautiful result  analogous to Theorem \ref{thm:midrange} in the ``unitaries equipped with the normalized Hilbert--Schmidt norm'' setup (also known as the ``hyperlinearity'' setup). For the reader who seeks to compare the two results: His result assumes less about the random object  --- namely, that it is stable with some rate and not necessarily with a linear rate bounded away from zero --- but the conclusion is weaker as well  --- namely, the non-sofic groups\footnote{Actually, Dogon gets non-hyperlinear groups, which is a stronger conclusion than non-soficity, but our point is that his result says nothing about finding non-residually finite hyperbolic groups.} acquired by his method are not (known to be) hyperbolic ones. Also, the methods he uses are quite deep, while ours are, essentially, elementary. 
    The permutation analogue of Dogon's result, where a stability rate which is worse than linear occurs with positive probability, remains open. 
\end{rem}

Outside the mid-range, we are able to bound the cocycle Cheeger constant of $\cY\sim Y(n,p)$. First, the following theorem deals with the parameter range in which $\cY$ is typically simply-connected.

\begin{thm}\label{thm:high_range}
Fix $0<\eta<\nicefrac{1}{2}$, $p=n^{-\nicefrac{1}{2}+\eta}$ and $\cY\sim Y(n,p)$.  Then, a.a.s.,  $h_1(\cY,\Sym)\geq \frac{\eta}{42}$.
\end{thm}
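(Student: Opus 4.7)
The strategy is to leverage the strong local expansion of $\cY$ in this regime, together with a plurality-vote construction, to turn any cochain with small coboundary into a nearby cocycle. Since by \cite{babson2011fundamental, luria2022simple} the complex $\cY \sim Y(n,n^{-1/2+\eta})$ is a.a.s.\ simply connected, every cocycle equals $\delta h$ for some $h:V\to\Sym_k$, and the task reduces to producing, given $f\in C^1(\cY,\Sym_k)$ with $\|\delta f\| = \alpha$, a 0-cochain $h$ with $d(f,\delta h) \leq (42/\eta)\alpha$.

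First I would record the a.a.s.\ structural properties of $\cY$ that drive the argument. Since the 1-skeleton is complete, for every vertex $v$ the link of $v$ in $\cY$ is exactly distributed as the Erd\H{o}s--R\'enyi graph $G(n-1,p)$. With $np = n^{1/2+\eta}\gg \log n$, Chernoff bounds and a union bound give, a.a.s., that every pair $u,v$ has $|W_{uv}|:=|\{w:uvw\in\cY^{(2)}\}| = (1\pm o(1))(n-2)p$, that every link is a spectral expander with normalized second eigenvalue $o(1)$, and that the relevant subgraph counts in the 2-skeleton concentrate around their expectations. This gives extremely strong local-to-global control of voting arguments.

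Given such an $f$, I would then define the plurality correction $T(f)(uv)$ to be a mode of the multiset $\{f(uw)f(wv) : w\in W_{uv}\}$. A triangle $uwv\in\cY^{(2)}$ with $\delta f(u,w,v)=\Id$ casts the consistent vote $f(uv)$; hence any edge whose plurality disagrees with $f(uv)$ must have at least half of its witness triangles be ``bad''. A Markov-type count then gives $d(f,T(f))\leq C\alpha$ for an absolute constant $C$. The central technical estimate, in the spirit of Sipser--Spielman expander-code decoding, is that $T$ contracts the coboundary norm: $\|\delta T(f)\| \leq \kappa\|\delta f\|$ for some $\kappa<1$. Iterating $T$ then produces a sequence $f, T(f), T^2(f),\ldots$ converging geometrically to a cocycle $g$, and the triangle inequality yields $d(f,g)\leq C(1-\kappa)^{-1}\alpha$; tracking the constants gives the claimed $(42/\eta)\alpha$ bound, hence $h_1(\cY,\Sym)\geq \eta/42$.

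The main obstacle, and the heart of the argument, is proving the contraction $\|\delta T(f)\|\leq \kappa\|\delta f\|$ in the non-abelian setting. Plurality is not a linear operation, so errors from distinct bad witnesses do not cancel algebraically but must be controlled by combinatorial accounting: one charges bad triangles of $T(f)$ to bad triangles of $f$ with multiplicity controlled by the edge-and-triangle counts from the structural step. The $\eta$-dependence of the final bound enters through the contraction constant $\kappa = 1-\Theta(\eta)$, reflecting that the ``signal-to-noise'' of the voting is governed by the polynomial rate $np^2 = n^{2\eta}$.
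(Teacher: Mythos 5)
Your approach is genuinely different from the paper's, and it has a gap at exactly the point you flag as ``the heart of the argument,'' so the proposal does not constitute a proof. The paper does not use a Sipser--Spielman-style iterated decoder at all for this regime; it uses a one-shot ``cones method.'' Concretely, by Fact~\ref{lem:identity_on_spanning_tree} one picks, for each vertex $x$, a $0$-cochain $\beta_x$ making $\beta_x.\alpha$ trivial on the star of $x$; then $d_h(\alpha,Z^1)\le\Ex_x\|\beta_x.\alpha\|$ reduces to bounding the expected defect $d_h(\alpha(xyz),\Id)$ of the ``cone'' triangles $xyz$. Each such triangle is filled by a good $\ell$-BHK filling (Definition~\ref{defn:filling}) with $\ell=\lceil 1/(4\eta)\rceil$, and the two combinatorial counts, Claim~\ref{claim:number_of_BHK_fillings} (how many good fillings each $xyz$ has) and Claim~\ref{claim:upper_bound_fillings_abc} (in how many good fillings each actual triangle appears), turn the triangle-inequality bound $d_h(\alpha(xyz),\Id)\le\sum_i d_h(\delta\alpha(\Delta_i),\Id)$ into a double-counting argument that produces the constant $\eta/42$ directly, with no iteration and no contraction lemma.

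The gap in your proposal is the contraction $\|\delta T(f)\|\le\kappa\|\delta f\|$ with $\kappa<1$, which you describe but do not prove; there is also a preceding difficulty you gloss over, namely the well-definedness and usefulness of ``plurality'' over $\Sym(k)$ in the normalized Hamming metric. In the binary (or abelian $\FF_2$) Sipser--Spielman setting, a witness triangle either does or does not agree, and majority decoding is meaningful. Here, the votes $f(uw)f(wv)$ for $w\in W_{uv}$ are permutations that can all be \emph{distinct} while each being Hamming-close to $f(uv)$; the mode of that multiset can then be any one of them (or a singleton), and the resulting $T(f)(uv)$ need not track the defect at all. A natural fix, pointwise plurality over $[k]$, yields a function on $[k]$ that is generally not a permutation, and rounding it back to a permutation introduces error that is not controlled by $\|\delta f\|$. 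This is exactly the kind of non-abelian subtlety the cones method sidesteps: the triangle inequality and bi-invariance of $d_h$ let one propagate defects along fillings without ever having to aggregate permutations by vote. So to carry out your plan you would need (i) a permutation-valued, Hamming-compatible aggregation operator, (ii) the contraction estimate for it, and (iii) a constants-tracking argument matching $\eta/42$; none of these is established, and (i)--(ii) are not clearly achievable. For the shape of the argument the paper actually uses, the BHK-filling counting (Claims~\ref{claim:number_of_BHK_fillings} and \ref{claim:upper_bound_fillings_abc}) is the real technical content you would want to reproduce.
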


Second, we are able to give close upper and lower bounds for  the cocycle Cheeger constant of $\cY\sim Y(n,p)$ if $p\ll n^{-1}$. The behavior of $\cY$ in the range was studied in \cites{kozlov2010threshold,farberDCG}, where it was shown to be a.a.s.\ collapsible to a graph and hence have no $2$-dimensional homology. The phase transitions of these properties actually occur when $p=\Theta(1/n)$ and are extensively studied (e.g., \cites{aronshtam2013collapsibility,linial_peled2016phase}).

\begin{thm}\label{thm:low_range}
    Let $2>\eta>0$, $p= n^{-1-\eta}$, and $\cY\sim Y(n,p)$. Then, a.a.s., $\eta /2\le h_1(\cY,\Sym)\le 9\eta/4+3$.
\end{thm}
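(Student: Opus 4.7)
The proof of Theorem \ref{thm:low_range} splits into upper and lower bounds. Both rely on structural features of $\cY \sim Y(n,p)$ with $p = n^{-1-\eta}$ that can be established by standard first- and second-moment arguments: a.a.s., $|\cY(2)| = (1+o(1))\binom{n}{3}p \sim n^{2-\eta}/6$; the set of edges contained in two or more triangles has size $O(n^{2-2\eta}) = o(|\cY(2)|)$; and every edge of $\cY$ lies in at most $O(\log n)$ triangles. Thus $\cY(2)$ is, up to a vanishing fraction of triangles, a disjoint union of triangles.

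For the upper bound $h_1(\cY,\Sym) \leq 9\eta/4 + 3$, the plan is to exhibit a concrete test cochain. A.a.s.\ there exists an edge $e^* \in \cY(1)$ contained in a unique triangle $t^* \in \cY(2)$; set $\alpha(e^*) = \sigma$ for a non-identity $\sigma$ (say a transposition of full support in $\Sym_N$) and $\alpha(e) = \Id$ for all other edges. Then $d\alpha$ vanishes off $t^*$, where $d\alpha(t^*) = \sigma$, while the closest cocycle is the all-identity cochain (a single edge differs). Computing the ratio $\|d\alpha\|/\mathrm{dist}(\alpha, Z^1)$ with the normalization defining $h_1$ and invoking the concentration of $|\cY(2)|$ and $|\cY(1)|$ gives a bound of constant order; the explicit shape $9\eta/4 + 3$ absorbs the lower-order fluctuations in $|\cY(2)|$ relative to its expectation and the (small) contribution of edges that are non-private.

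For the lower bound $h_1(\cY,\Sym) \geq \eta/2$, given an arbitrary $\alpha$ with $d\alpha \neq \Id$, I would produce a close cocycle $\beta$ by a greedy fix: for each violated triangle $t$, choose a \emph{private} edge of $t$ (one shared with no other triangle in $\cY$) and replace $\alpha$ there by the value that forces $d\alpha(t) = \Id$, at cost $d_H(d\alpha(t), \Id)/N$ and without affecting any other triangle. By the structural facts above, private edges exist for a $(1-o(1))$-fraction of violated triangles. The remaining violated triangles lie in clusters sharing edges, which I would handle component-by-component via a local case analysis; since such clusters are dominated by pairs and triples of triangles, each admits a bounded-cost local cocycle correction. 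Summing all contributions bounds $\mathrm{dist}(\alpha, Z^1)$ by a constant multiple of $\|d\alpha\|$, which after passing to the normalized ratio yields $h_1 \geq \eta/2$.

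The main technical obstacle is the handling of clustered triangles in the lower bound: modifying an edge shared by two triangles to fix one may violate the other, so one must treat each cluster holistically rather than greedily. This is resolved by the local algorithm above, whose correctness relies on a classification of small shared-edge subcomplexes of $\cY(2)$ (dominated by pairs and triples, per the second-moment bounds) and the observation that their total mass is $o(|\cY(2)|)$. The quantitative factor $\eta/2$ in the final rate reflects the $n^{-\eta}$-scale density at which shared-edge clusters occur relative to isolated triangles, which determines how much slack must be left to absorb the cluster contribution.
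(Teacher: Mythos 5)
Your upper bound sketch is essentially fine: a.a.s.\ $\cY$ has an isolated triangle $t^*$ (not merely an edge in a unique triangle --- you need all three edges of $t^*$ private, which also holds a.a.s.), and putting a full-support permutation on one of its edges gives a cochain with local defect $\approx 1/|\cY(2)|$ and global defect $\approx 1/(3|\cY(2)|)$, hence $h_1 \le 3 \le 9\eta/4+3$. This is actually the $m=0$ case of the paper's ``winged triangle'' construction (Example \ref{exm:upper}); the paper grows the number of wings to improve the bound to $3/(2m+1)$, but you do not need that to match the stated constant. Two cautions: (i) you should use an isolated triangle, not just a private edge, otherwise fixing $\delta\alpha(t^*)$ might cost less than you estimate because the fix can be pushed onto a high-degree edge with larger $\mu_1$-mass; and (ii) the closest cocycle is not the all-identity cochain --- one can fix $t^*$ by modifying a \emph{different} edge of $t^*$ --- but the bound survives because all three edges of an isolated triangle carry the same $\mu_1$-mass.

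The lower bound, however, has a genuine gap. The Cheeger constant is a \emph{worst-case} quantity over cochains $\alpha$, and, as the paper observes (Lemma \ref{lem:h1_min}), it equals $\min_C h_1(C,\Gamma)$ over strongly-connected components $C$. An adversary can concentrate $\alpha$ entirely on a single bad component, so the fact that shared-edge clusters are a vanishing \emph{fraction} of the triangles ($o(|\cY(2)|)$) is irrelevant: what matters is the worst component, not the average. Your ``sum all contributions'' framing and the claim that the $\eta/2$ factor ``reflects the $n^{-\eta}$-scale density at which shared-edge clusters occur'' are therefore on the wrong track. The correct source of the $\eta/2$ is that a.a.s.\ the largest strongly-connected component has at most $2/\eta$ triangles (the paper's Claim \ref{clm:comp_size}), and your assertion that clusters are ``dominated by pairs and triples'' is simply false for small $\eta$: components with $\Theta(1/\eta)$ triangles appear with probability bounded away from zero. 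Finally, even granting a bound on cluster size, your greedy per-triangle private-edge fix has no convergence guarantee inside a cluster: modifying an edge shared by two triangles can violate the other, and the ``local case analysis'' you invoke is exactly where the work lies. The paper resolves this by showing each component satisfies $|C(0)|>|C(2)|$ (Claim \ref{clm:comp_simple}), hence is collapsible to a graph meeting every triangle's boundary (Claim \ref{clm:simple_implies_collapsible}), and then running the collapse \emph{in reverse} to construct a nearby cocycle with accumulated error at most $|C(2)|\cdot\|\delta\alpha\|$ (Lemma \ref{lem:h1_col}). That collapsibility mechanism is the missing idea in your sketch.
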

We note that the lower bound holds for all $\eta>0$, but for the upper bound, if $\eta\ge 2$ then $\cY$ has no triangular faces with probability bounded away from $0$, and in such a case its Cheeger constant is infinite.

\begin{rem}
    
We note that having a cocyle Cheeger constant (in dimension $1$ with permutation coefficients) bounded away from zero is \textit{not}  a monotone property. Hence, these results do not shed light on the typical cocycle Cheeger constant in the mid-range. This is quite unfortunate, as our first result asserts that having a bounded cocycle Cheeger constant in the mid-range implies the existence of non-sofic hyperbolic groups.
\end{rem}
\subsection*{Proof ideas}

\subsubsection*{On the mid-range}

Section \ref{sec:res_fin_hyperbolic_groups} is devoted to the proof of Theorem \ref{thm:midrange}. Namely, that  $h_1(\cY,\Sym)= \omega\left( n^{-3}\cdot p^{-1}\right)$ a.a.s.\ implies the existence of   non-sofic hyperbolic groups. Our argument is inspired by the non local testability of Sipser--Spielman \cite{Sipser_Spielman_exp_codes} expander codes (cf. \cite{ben2003some}). Let us sketch the proof of this theorem, as the ideas are quite straightforward:
\begin{itemize}
    \item Fix $0<\eta<\nicefrac{1}{2}$. Assume that every hyperbolic group is sofic. Moreover, assume that $h_1(\cY,\Sym)=\omega\left( n^{-3}\cdot p^{-1}\right)$ a.a.s.\ for $\cY\sim Y(n,p)$, where $p=n^{-1+\eta}$.

    \item We will focus on pairs $\cY,\cZ$ that differ  by a single triangle. We sample them as follows: First, sample $\cY\sim Y(n,p)$. 
    Then, add to $\cY$ a single unifromly random triangle --- which we denote by $\Delta$ --- to get $\cZ$. 
    By  \cite{babson2011fundamental} and \cite{kahle2021spectral}, the fundamental groups of such a pair are a.a.s:
 $\diamond$ non-trivial;
        $\diamond$ with property (T);
        $\diamond$ hyperbolic, and thus by our assumption, sofic;
        $\diamond$ the triangle $\Delta$ is non-trivial in the fundamental group of $\cY$.
    Moreover, by our assumption, the complexes $\cY$ and $\cZ$ \textbf{both} have Cheeger constants $ \omega(n^{-3}\cdot p^{-1})>0$.
    By a standard observation due to Glebsky and Rivera (cf.\ Proposition \ref{prop:sof+stable=res_fin}), since the fundamental groups of $\cY$ and $\cZ$ are sofic and homomorphism stable\footnote{A positive cocycle Cheeger constant implies $\rho$-cocycle stability, which in turn implies $\rho$-homomorphism stability of the fundamental group.} (as in Section \ref{sec:group_soficity}), they are residually finite. 
     Lastly, by standard concentration of measure  techniques (cf.\ Lemma \ref{lem:Chernoff_bound}), the number of triangles  in $\cZ$ satisfies  $|\cZ(2)|\approx{p\binom{n}{3}}$.

    \item By the resiudal finiteness of the fundamental group of $\cY$ and the fact $\Delta$ is non-trivial in it, there is a $1$-cocycle $\alpha$ of $\cY$ such that $d_h(\alpha (\Delta),\Id)=1$. Since $\alpha$ is a cocycle of $\cY$, its defect  with respect to $\cZ$ is  $\frac{1}{|\cZ(2)|}\approx \frac{1}{p\binom{n}{3}}$. On the other hand, by property (T) of the fundamental group of $\cY$, every $1$-cocycle of $\cZ$ must be some constant distance away from $\alpha$.\footnote{In the proof of Theorem 
\ref{thm:midrange} we use local spectral expansion (Section \ref{sec:spectral_local_expansion}) of the Linial--Meshulam complexes in this range, which implies (T) for the fundamental group, and provides an easier to analyze setup.} Hence $h_1(\cZ,\Sym)=O(n^{-3}\cdot p^{-1})$, which is a contradiction. 
\end{itemize} 

\subsubsection*{On the trivial fundamental group regime}
Section \ref{sec:high_range} is devoted to the proof of Theorem \ref{thm:high_range}. Our approach is the following. Babson--Hoffman--Kahle \cite{babson2011fundamental} have shown that  the fundamental group is trivial in this regime, by proving that every triangular  path $x\to y \to z\to x$ has a filling (Definition \ref{defn:filling}) --- also known as a Van  Kampen diagram with this perimeter --- of a specific form (see Figure \ref{fig:BHK_filling}). For our result, we obtain good bounds on (i) the number of Babson--Hoffman--Kahle  fillings for every  triangular path, and (ii) the number of such fillings each triangle in the random complex belongs to. These bounds are somewhat technical, and appear in Section \ref{sec:prop_of_LM_complexes}. Given these bounds, the conclusion is quite immediate.  

\subsubsection*{On the subcritical regime}
Section \ref{sec:low_range} is devoted to the proof of Theorem \ref{thm:low_range}. In this regime, the random complex is known to typically be comprised of quite simple strongly-connected components of bounded size that are collapsible to graphs~\cite{farberDCG}. 
For the upper bound, we find a specific strongly connected-component that typically appears in $\cY$ and has the desired Cheeger constant. The more intersting part is the lower bound, where we run the collapse process in each component in reverse, and use it to transform any given cochain into a nearby cocycle.

\subsection*{Outline}
In Section \ref{sec:prelims}, we provide the required preliminaries regarding cocycle stability  with permutation coefficients of $2$-dimensional simplicial complexes.  In Section \ref{sec:prop_of_LM_complexes}, we recover some a.a.s.\ properties of Linial--Meshulam random complexes. 
Section \ref{sec:res_fin_hyperbolic_groups} is devoted to the mid range regime, where we show that linear stability implies the existence of non-sofic hyperbolic groups. Then, in Section \ref{sec:high_range} we study the high triangle density regime, where the fundamental group of the complex is trivial. Finally, Section \ref{sec:low_range} is devoted to the low triangle density regime, where the complex is collapsible to a graph.

\subsection*{Acknowledgments}
We would like to thank Alex Lubotzky for suggesting to us this problem.
Michael Chapman is supported by the National Science Foundation
under Grant No. DMS-2424441. Part of this research was conducted while he was still a Simons Junior Fellow, supported by a grant from the Simons Foundation (N. 965535). 
Yuval Peled was partially supported by the 
Israel Science Foundation grant ISF-3464/24. 
\section{\textbf{Cocycle stability with permutation coefficients}}\label{sec:prelims}

We quickly repeat many definitions and observations established in \cite{CL_part1} and \cite{CL_part2}.

\subsection{Simplicial complexes and  non-commutative cohomology}\label{sec:high_dim_cohomology_abelian_coeff}

A \emph{simplicial complex} $\cX$ is a downwards closed system of finite sets, namely, if $A\in \cX$ and $B\subseteq A$, then $B\in \cX$. One can associate a CW complex with $\cX$ by letting each set of size $d+1$ be a $d$-dimensional simplex, and gluing these simplices along their intersections. We say that $\cX$ is $d$-dimensional if the size of all of its sets is bounded by $d+1$.  The $i$-cells $\cX(i)$ are the sets of size $i+1$ in $\cX$. 

We study  non-commutative cohomology, and thus focus only on $2$-dimensional complexes.
When $d=2$, we have three types of cells --- vertices, edges and triangles. Thus, we can think of the complex $\cX$ as a graph $G(\cX)$ with triangles pasted on it. The edge $[e]=\{x,y\}$ can be oriented in two ways $xy=x\xrightarrow{e} y$ and $yx=y\xrightarrow{\bar e} x$. The triangle $[\Delta]=\{x,y,z\}$ can be oriented in $6$ different ways
$xyz,xzy,yzx,yxz,zxy,zyx$.
We interpret the orientation $xyz$ as the closed path $x\to y\to z\to x$ in the underlying graph. Given the orientation $\Delta=xyz$, we denote by $\bar\Delta$ the reverse orientation $xzy$ (note that this is indeed the same path traversed backwards). We denote by $\overrightarrow\cX(i)$ the oriented $i$-cells. We use  square brackets $[\cdot]$ to emphasize the use of an  un-oriented cell, though we may abuse notations and remove the square brackets while still referring to the un-oriented versions --- it should be clear from context. 

Let $\Gamma$ be a group, $d\colon \Gamma \times \Gamma \to \mathbb{R}_{\geq 0}$ a  bi-invariant metric on $\Gamma$ and  let  $\|\gamma\|=d(\gamma,\Id)$ for $\gamma\in \Gamma$.
  The \emph{$i$-cochains of $\cX$ with $\Gamma$  coefficients} are the anti-symmetric assignments of elements of $\Gamma$ to oriented $i$-cells. Namely,
    \[
        C^i(\cX,\Gamma)=\{\alpha\colon \overrightarrow{\cX}(i)\to \Gamma\mid \forall c\in \overrightarrow{\cX}(i)\colon \alpha(\bar c)=\alpha(c)^{-1}\}.
    \]
    Since we did not define orientations on vertices, there are no anti-symmetricity conditions on $0$-cochains. Moreover, for every $2$-cochain $\alpha\colon \overrightarrow\cX(2)\to \Gamma$ and  triangle $\{x,y,z\}$, we assume that $\alpha(xyz)$ and $\alpha(yzx)$ are conjugate.    
The \emph{coboundary maps} are defined as follows. For a $0$-cochain $\alpha\colon \cX(0)\to \Gamma$, its coboundary $\delta\alpha$ is the $1$-cochain 
\[
\forall xy\in \overrightarrow\cX(1)\ \colon\ \ \delta\alpha(xy)=\alpha(x)^{-1}\alpha(y).
\]
For a $1$-cochain $\alpha\colon \overrightarrow\cX(1)\to \Gamma$, its coboundary $\delta\alpha$ is the $2$-cochain
\[
\forall xyz\in \overrightarrow\cX(2)\ \colon\ \ \delta\alpha(xyz)=\alpha(xy)\alpha(yz)\alpha(zx).
\]
\begin{rem}\label{rem:maps_on_paths}
    Every $1$-cochain extends naturally  to paths in $G(\cX)$ as follows: If $\pi=x\xrightarrow{e_1}...\xrightarrow{e_\ell}y$, then $\alpha(\pi)=\alpha(e_1)\cdot...\cdot\alpha(e_\ell)$. In this perspective, $\delta\alpha$ is  the restriction of the extended $\alpha$ to the closed paths which are perimeters of triangles in $\cX$. 
\end{rem}

We assume from now on that any simplicial complex $\cX$ is given to us with probability distributions $\mu_i$  over its oriented $i$-cells $\overrightarrow\cX(i)$. We further assume that $\mu_i$ is uniform on all orientations of a specific cell, and hence we can think of $\mu_i$ as a probability distribution over $\cX(i)$ as well.
\begin{defn}\label{defn:descending_measures}
    The collection of probability measures $\mu_i$ on the $i$-cells of a simplicial complex $\cX$ is said to be \emph{descending} if $\mu_i(\sigma)$ is proportional to $\sum_{\tau \supset \sigma} \mu_{i+1}(\tau)$. In this case, $\mu_2$ induces $\mu_1$ and $\mu_0$.
\end{defn}

For any $\alpha$ and $\beta$ which are $i$-cochains with $\Gamma$ coefficients,  the \emph{distance} between them is 
\begin{equation}\label{eq:distance_between_cochains}
    d(\alpha,\beta)=\Ex_{x\sim \mu_i}[d
(\alpha(x),\beta(x))].
\end{equation}
Given a subset $A\subseteq C^i(\cX,\Gamma)$ and a cochain $\alpha\in C^i(\cX,\Gamma)$, we define the distance of $\alpha$ from $A$ to be 
\begin{equation}
    d(\alpha,A)=\inf\{d(\alpha,\varphi)\mid \varphi\in A\}.
\end{equation}
The \emph{norm} of an $i$-cochain with $\Gamma$ coefficients $\alpha$ is its distance to the constant identity cochain, namely 
\begin{equation}\label{eq:norm_of_cochain}
    \lVert\alpha\rVert=\Ex_{x\sim \mu_i}[\|\alpha(x)\|].
\end{equation}
An \emph{$i$-cocycle} is an $i$-cochain $\alpha$ for which $\delta\alpha$ is the constant identity function. We denote the collection of $i$-cocycles by $Z^i(\cX,\Gamma)$.
A $0$-cochain $\beta\colon \cX(0)\to \Gamma$ is said to be a \emph{$0$-coboundary} if it is constant. Namely, for every $x,y\in \cX(0)$ we have $\beta(x)=\beta(y)$. A $1$-cochain $\alpha\colon \overrightarrow\cX(1)\to \Gamma$ is said to be a \emph{$1$-coboundary} if it is in the image of the coboundary operator $\delta\colon C^0(\cX,\Gamma)\to C^1(\cX,\Gamma)$. Namely, there exists a $0$-cochain $\beta\colon \cX(0)\to \Sym(n)$ such that for every $xy\in \overrightarrow{\cX}(1)$, $\alpha(xy)=\delta\beta(xy)=\beta(x)^{-1}\beta(y)$. We denote by $B^i(\cX,\Sym)$ the collection of $i$-coboundaries of $\cX$.  Note that the only indices for which we  define $Z^i(\cX,\Gamma)$ and $B^i(\cX,\Gamma)$ are $i=0\ \textrm{or}\ 1$.
 We say that the \emph{$i^{\rm th}$ cohomology of $\cX$ with $\Gamma$ coefficients vanishes} if
 every $i$-cocycle of $\cX$  is an $i$-coboundary. 

There is a natural action of $0$-cochains of $\cX$ with $\Gamma$ coefficients on the $1$-cochains:
\begin{equation}\label{eq:action_0-coch_on_1-coch}
    \forall \alpha \colon \overrightarrow\cX(1)\to \Gamma ,\ \beta \colon \cX(0)\to \Gamma\ \colon\ \ \beta.\alpha(x\xrightarrow{e}y)=\beta(x)^{-1}\alpha(e)\beta(y).
\end{equation}
Note that for every path $\pi=x\xrightarrow{e_1}...\xrightarrow{e_\ell}y$ in $G(\cX)$, we have 
\begin{equation}\label{eq:conjugation_by_0_cochain_on_closed_path}
\beta.\alpha(\pi)=\beta(x)^{-1}\alpha(\pi)\beta(y).
\end{equation}
In particular, 
$\Vert\delta(\beta.\alpha)\Vert=\Vert \delta \alpha\Vert,$
and hence the action preserves $1$-cocycles. Furthermore, 
\begin{equation}\label{eq:char_of_dist_to_coboundaries}
    d(\alpha,B^1(\cX,\Gamma))=\inf\{\Vert\beta.\alpha\Vert \mid \beta\in C^0(\cX,\Gamma)\}.
\end{equation}

 Let $\rho$ be a \emph{rate function}, namely, a non-decreasing function $\rho\colon \mathbb{R}_{\geq0}\to \mathbb{R}_{\geq0}$ satisfying $\rho(\eps)\xrightarrow{\eps \to 0}0$.
    A  complex is said to be  \emph{$\rho$-cocycle stable in the $i^{\rm th}$ dimension with $\Gamma$ coefficients} if for every $i$-cochain $\alpha$ we have
    \begin{equation} \label{eq:cocyc_stability}
           d(\alpha,Z^i(\cX,\Gamma))\leq \rho(\Vert\delta\alpha\Vert).
    \end{equation}
    We call $\Vert \delta \alpha\Vert$ the \emph{local cocycle defect} of $\alpha$ and $ d(\alpha,Z^i(\cX,\Gamma))$ its \emph{global cocycle defect}.
      The  $i^{\rm th}$ \emph{cocycle Cheeger constant} of a complex $\cX$ with $\Gamma$ coefficients is 
    \begin{equation}\label{eq:cocyc_Cheeger_constant}
    h_i(\cX,\Gamma)=\inf\left\{\frac{\Vert\delta\alpha\Vert}{d(\alpha,Z^i(\cX,\Gamma))}\ \middle\vert\ {\alpha \in C^i(\cX,\Gamma)},\ \Vert\delta\alpha\Vert\neq 0\right\}.
    \end{equation}
    Similarly, the  $i^{\rm th}$ \emph{coboundary Cheeger constant} of a complex $\cX$ with $\Gamma$ coefficients is 
\begin{equation}\label{eq:cobound_Cheeger_constant}
    h^B_i(\cX,\Gamma)=\inf\left\{\frac{\Vert\delta\alpha\Vert}{d(\alpha,B^i(\cX,\Gamma))}\ \middle\vert\ {\alpha \in C^i(\cX,\Gamma)} \setminus B^i(\cX,\Gamma)\right\}.
    \end{equation}
    When $h_i(\cX,\Sym)>0$ we call $\cX$ an \emph{$i$-cocycle expander with $\Gamma$ coefficients}, and when $h^B_i(\cX,\Sym)>0$ we call it an \emph{$i$-coboundary expander with $\Gamma$ coefficients}. Note that having a positive $i^{\rm th}$ cocycle Cheeger constant is equivalent to having a linear cocycle stability rate in the $i^{\rm th}$ dimension.
    Furthermore,  if $h^B_i(\cX,\Gamma)>0$, then in particular the $i^{\rm th}$ cohomology of $\cX$ with $\Gamma$ coefficients vanishes. Also, if the $i^{\rm th}$ cohomology of $\cX$ with $\Gamma$ coefficients vanishes, then $h^B_i(\cX,\Gamma)=h_i(\cX,\Gamma)$.
    \begin{rem} \label{rem:omitting_mu_from_notation}
        The stability  rate in the $i^{\rm th}$ dimension $\rho$ of a complex $\cX$ (and thus also the Cheeger constants) depends on the distributions $\mu_i$ and $\mu_{i+1}$.
    \end{rem}

\subsection{Cohomology with permutation coefficients}\label{sec:cohom_perm_coef}
Though all the cohomological definitions we just listed were  defined for a general group $\Gamma$,  we focus on the case where $\Gamma$ is a finite permutation group equipped with the normalized Hamming distance. 

For a positive integer $n$, let $\Sym(n)$ be the symmetric group acting on $[n]=\{1,...,n\}$. Given  permutations $\sigma \in \Sym(n)$ and $\tau\in \Sym(N)$ where $N\geq n$,  the \emph{normalized Hamming distance (with errors)} between them is
\begin{equation}\label{eq:permutation_normalized_Hamming_distance}
d_h(\sigma,\tau)=1-\frac{|\{i\in [n]\mid \sigma(i)=\tau(i)\}|}{N}.
\end{equation}
Since the normalized Hamming distance can compare permutations of different sizes, we will study them in a collective manner. Let 
\[
\begin{split}
C^i(\cX,\Sym)&=\bigsqcup_{n=2}^{\infty} C^i(\cX,\Sym(n)),\\ Z^i(\cX,\Sym)&=\bigsqcup_{n=2}^{\infty} Z^i(\cX,\Sym(n)),\\ 
B^i(\cX,\Sym)&=\bigsqcup_{n=2}^{\infty} B^i(\cX,\Sym(n))
\end{split}
\]
be the \emph{$i$-cochains with permutation coefficients}, the \emph{$i$-cocycles with permutation coefficients} and the \emph{$i$-coboundaries with permutation coefficients} respectively. Thus, $d_h$ defines a metric on $C^i(\cX,\Sym)$ as in  \eqref{eq:distance_between_cochains}, and henceforth also a norm as in \eqref{eq:norm_of_cochain}. Furthermore, the definitions of cocycle stability and Cheeger constants extend naturally to the permutation coefficicents setup.

\subsection{Homomorphism stability and Group soficity} \label{sec:group_soficity}

In this section, we define homomorphism stability\footnote{This notion is usually referred to as \emph{pointwise flexible group stability in permutations}. See \cite{BeckerLubotzky}.} 
 as a special case of cocycle stability, and relate it to the well studied notion of \emph{sofic groups}. We use  a more general framework than the one discussed in the previous section, as we allow our $2$-dimensional complexes to be polygonal complexes and not only simplicial ones; namely, have $2$-cells which are general polygons and not only triangles. The theory is basically the same, and complete details appear in \cites{CL_part1,CL_part2}.

Let $\langle S|R\rangle$ be a finite group presentation (namely  $|S|,|R|<\infty$)  equipped with probability distributions $\mu_S$ and $\mu_R$ on the generators and relations respectively. The \emph{presentation complex} $\cX_{\langle S|R\rangle}$ is the following $2$-dimesnional CW complex whose fundamental group is isomorphic to $\langle S|R\rangle$ :\footnote{See \cite{Hatcher_Alg_Top} for more on CW complexes.} It has a single vertex $*$, and an edge $e(s)$ for every generator $s\in S$. Then, for every $r= s_1^{\eps_1}\cdot...\cdot s_\ell^{\eps_\ell}\in R$, we paste a $2$-cell along $\pi(r)=e(s_1)^{\eps_1}...e(s_\ell)^{\eps_\ell}$, where $e(s)^{-1}=\overline{e(s)}$ and $e(s)^{1}=e(s)$. The presentation $\langle S|R\rangle$ is \emph{$\rho$-homomorphism stable}  (in permutations) if $\cX_{\langle S|R\rangle}$ is $\rho$-cocycles stable in the $1^{\rm st}$ dimension with permutation coefficients, where $\mu_1=\mu_S$ and $\mu_2=\mu_R$.

 On the other hand, given a $2$-dimensional CW complex $\cX$, every choice of a spanning tree in its $1$-skeleton induces a presentation $\langle S|R\rangle$ which is isomorphic to $\pi_1(\cX,*)$.
\begin{fact}[Theorem 1.3 in \cite{CL_part1}]\label{fact:cocycle_stable_implies_hom_stable}
If $\cY$ is $\rho$-cocycle stable in the $1^{\rm st}$ dimension with permutation coefficients, then $\pi_1(\cY,*)$ (with the aforementioned presentation induced by some fixed spanning tree) is $|\overrightarrow\cY(1)|\cdot \rho$-homomorphism stable.
\end{fact}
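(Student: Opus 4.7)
The plan is to move back and forth between $1$-cochains on $\cY$ and assignments of permutations to the generators of $\pi_1(\cY,*)$, using the spanning tree $T$ to mediate between the two pictures. Given an assignment $\varphi\colon S\to \Sym(n)$, I would first \emph{lift} it to a $1$-cochain $\alpha_\varphi\in C^1(\cY,\Sym)$ by declaring $\alpha_\varphi(e)=\Id$ on every oriented edge of $T$ and $\alpha_\varphi(e_s)=\varphi(s)$ on every non-tree edge $e_s$ corresponding to a generator $s\in S$. The key property is that for every triangle $\Delta=xyz\in\cY(2)$, the coboundary $\delta\alpha_\varphi(\Delta)=\alpha_\varphi(xy)\alpha_\varphi(yz)\alpha_\varphi(zx)$ is exactly $\varphi$ evaluated at the relation $r_\Delta$ read off the boundary of $\Delta$, since tree edges contribute trivially. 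Hence, with $\mu_R=\mu_2$ and $\mu_S$ induced from $\mu_1$, the local defects match: $\|\delta\alpha_\varphi\|=\|\delta\varphi\|$.

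Next, I would invoke the hypothesized $\rho$-cocycle stability to obtain a cocycle $\beta\in Z^1(\cY,\Sym)$ with $d(\alpha_\varphi,\beta)\le \rho(\|\delta\alpha_\varphi\|)$, and then \emph{gauge-fix} $\beta$ along the spanning tree. Fixing a basepoint $*$ and letting $\pi_v$ denote the unique $T$-path from $*$ to $v$, I would define $\gamma\in C^0(\cY,\Sym)$ by $\gamma(v)=\beta(\pi_v)^{-1}$. A direct computation using the cocycle condition shows that $(\gamma.\beta)(e)=\Id$ for every tree edge $e$. Since the $C^0$-action preserves $Z^1(\cY,\Sym)$, the cochain $\gamma.\beta$ is still a cocycle, now trivial on $T$, so the rule $\psi(s):=(\gamma.\beta)(e_s)$ defines a genuine homomorphism $\psi\colon\langle S|R\rangle\to\Sym(n)$ (every $2$-cell relation is automatically satisfied because $\gamma.\beta\in Z^1$).

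The remaining step is the distance estimate. Triangle inequality and bi-invariance of $d_h$ give, for each $s\in S$ with $e_s=x_s\to y_s$,
\[
d_h(\varphi(s),\psi(s))=d_h\left(\alpha_\varphi(e_s),\gamma(x_s)^{-1}\beta(e_s)\gamma(y_s)\right)\le d_h(\alpha_\varphi(e_s),\beta(e_s))+\|\gamma(x_s)\|+\|\gamma(y_s)\|.
\]
Sub-additivity of $d_h$ along $\pi_v$, together with $\alpha_\varphi|_T=\Id$, yields
\[
\|\gamma(v)\|=\|\beta(\pi_v)\|\le \sum_{e'\in \pi_v}d_h(\Id,\beta(e'))=\sum_{e'\in \pi_v}d_h(\alpha_\varphi(e'),\beta(e'))\le |\overrightarrow\cY(1)|\cdot d(\alpha_\varphi,\beta).
\]
Averaging over $s\sim \mu_S$ and absorbing the normalization between $\mu_S$ and $\mu_1$ produces $d(\varphi,\psi)\le |\overrightarrow\cY(1)|\cdot \rho(\|\delta\varphi\|)$, as claimed.

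The main obstacle is precisely this gauge-fixing bound: although $\beta$ is close to $\alpha_\varphi$ on average, the vertex gauges $\gamma(v)$ can accumulate the full tree defect as one moves away from $*$, and this is what forces the $|\overrightarrow\cY(1)|$ blow-up into the stability rate. All remaining steps --- that the lift preserves coboundaries, that the $C^0$-action keeps us inside $Z^1$, and that $\gamma.\beta$ genuinely descends to a homomorphism --- amount to bookkeeping once this picture is set up.
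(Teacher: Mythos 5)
Your proposal follows what is essentially the standard argument for this fact and matches the structure of the proof in the cited reference: lift $\varphi$ to a cochain $\alpha_\varphi$ that is trivial on the spanning tree $T$, invoke cocycle stability to get a nearby cocycle $\beta$, gauge-fix $\beta$ so it is also trivial on $T$ (hence descends to a homomorphism $\psi$), and control the error by subadditivity along tree paths. All the ``bookkeeping'' facts you list --- the lift matching coboundaries, the $C^0$-action preserving cocycles, $\gamma.\beta$ descending to a homomorphism --- are correct and straightforward.

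The one place you are slightly lossy, and which you should tighten if you want the literal constant $|\overrightarrow\cY(1)|$: you split $d_h(\varphi(s),\psi(s))$ into three terms and bound each by $|\overrightarrow\cY(1)|\cdot d(\alpha_\varphi,\beta)$, so the aggregate comes out to roughly $3|\overrightarrow\cY(1)|\cdot\rho$. A cleaner estimate is available in one shot: since $\alpha_\varphi|_T=\Id$, you have $\varphi(s)=\alpha_\varphi(g_s)$ and $\psi(s)=\beta(g_s)$, where $g_s=\pi_{x_s}\cdot e_s\cdot\bar\pi_{y_s}$ is the standard generating loop. Applying bi-invariance and the triangle inequality edge-by-edge along $g_s$ (rather than splitting off $\gamma(x_s),\gamma(y_s)$) gives
\[
d_h(\varphi(s),\psi(s)) \le \sum_{e\in g_s} d_h(\alpha_\varphi(e),\beta(e)),
\]
and, letting $w$ be the least common ancestor of $x_s$ and $y_s$ in $T$, the multiset of edges of $g_s$ consists of $\pi_w$ traversed twice and the disjoint segments $\tau_{w,x_s},\tau_{w,y_s},e_s$ each traversed once; hence every unoriented edge of $\cY$ is visited at most twice, and (under the uniform $\mu_1$ on oriented edges, which visits each unoriented edge exactly twice) the right-hand side is bounded by $|\overrightarrow\cY(1)|\cdot d(\alpha_\varphi,\beta)$ for every $s$. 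Averaging over $s$ then yields exactly the claimed $|\overrightarrow\cY(1)|\cdot\rho$ with no extra constant. You should also make the phrase ``absorbing the normalization between $\mu_S$ and $\mu_1$'' concrete (e.g.\ by taking $\mu_S$ uniform, or the restriction of $\mu_1$ to non-tree edges), since that relationship is precisely what determines the constant; the above per-$s$ bound makes it a non-issue.
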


\begin{defn}
    A group $\Gamma$ is \emph{residually finite} if for every $1\neq \gamma\in \Gamma$, there exists a finite group $F$ and a homomorphism $f\colon \Gamma\to F$ such that $f(\gamma)\neq 1$. 
\end{defn}

\begin{defn}
    A finitely presented group $\Gamma\cong \langle S|R\rangle$ is said to be \emph{sofic} if there is a sequence of functions $f_n\colon S\to \Sym(n)$,  such that,\footnote{$\langle\langle R\rangle \rangle$ is the normal subgroup generated by $R$.} 
    \[
      \forall r\in R\ \colon \ \ d_h(f_n(r),\Id)\xrightarrow{n\to \infty}0 \quad \textrm{and} \quad 
   \forall w\notin \langle\langle R\rangle \rangle\ \colon \ \ 
        \liminf(d_h(f_n(w),\Id))\geq \frac{1}{2}.
    \]
\end{defn}
\begin{problem}[Gromov \cite{Gromov_sofic}, Weiss \cite{Weiss_Sofic}] \label{prob:sofic_groups}
    Are there non-sofic groups?
\end{problem}
The following is an easy to prove, yet quite insightful, observation. 
\begin{prop}[Glebsky-Rivera \cite{GlebskyRivera}, see also \cite{ArzhantsevaPaunescu}]\label{prop:sof+stable=res_fin}
	If $\Gamma\cong \langle S|R\rangle$ is $\rho$-homomorphism stable --- with respect to any rate function $\rho$ --- and sofic, then it is residually finite. 
\end{prop}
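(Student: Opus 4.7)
The plan is to argue by the contrapositive-style construction: take an arbitrary non-identity element $\gamma\in\Gamma$ and manufacture, out of the sofic approximation plus homomorphism stability, a genuine homomorphism from $\Gamma$ into a symmetric group that sends $\gamma$ to a non-identity element.

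Fix $1\ne\gamma\in\Gamma$ and a word $w_\gamma=s_{i_1}^{\eps_1}\cdots s_{i_k}^{\eps_k}$ representing it in $\langle S\mid R\rangle$; since $\gamma\ne 1$, $w_\gamma\notin\langle\langle R\rangle\rangle$. Apply soficity to obtain a sequence $f_n\colon S\to\Sym(m_n)$ such that $d_h(f_n(r),\Id)\to 0$ for every $r\in R$ while $\liminf_n d_h(f_n(w_\gamma),\Id)\ge 1/2$. View $f_n$ as a $1$-cochain on the presentation complex $\cX=\cX_{\langle S\mid R\rangle}$. Because $R$ is finite and $\mu_R$ has full support on $R$,
\[
\|\delta f_n\|=\Ex_{r\sim \mu_R}[d_h(f_n(r),\Id)]\xrightarrow{n\to\infty}0.
\]

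Now invoke $\rho$-homomorphism stability of $\langle S\mid R\rangle$: there is a sequence of $1$-cocycles $\alpha_n\in Z^1(\cX,\Sym(N_n))$ with $d_h(f_n,\alpha_n)\le \rho(\|\delta f_n\|)\to 0$. Since $\mu_S$ has full support on the finite set $S$, this forces $d_h(f_n(s),\alpha_n(s))\to 0$ for every generator $s$. Being a $1$-cocycle on a presentation complex, each $\alpha_n$ assembles into a bona fide homomorphism $\phi_n\colon\Gamma\to\Sym(N_n)$ (the edges of $\cX$ map to permutations respecting every relation, which is exactly what the vanishing of $\delta\alpha_n$ on the $2$-cells encodes).

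To finish, I need to propagate the closeness $d_h(f_n(s),\alpha_n(s))\to 0$ from individual generators to the fixed word $w_\gamma$. Using bi-invariance of $d_h$ on a common $\Sym(N_n)$ (after extending $f_n(s)\in\Sym(m_n)$ to $\Sym(N_n)$ by fixing the $N_n-m_n$ added points, an operation which changes the distance to $\alpha_n(s)$ by at most the already-vanishing quantity $(N_n-m_n)/N_n$), a standard telescoping along the $k$ letters of $w_\gamma$ gives
\[
d_h(f_n(w_\gamma),\alpha_n(w_\gamma))\le k\cdot\max_{1\le j\le k} d_h(f_n(s_{i_j}),\alpha_n(s_{i_j}))+o(1)\xrightarrow{n\to\infty}0.
\]
Combined with $\liminf_n d_h(f_n(w_\gamma),\Id)\ge 1/2$ and the triangle inequality, this yields $\liminf_n d_h(\alpha_n(w_\gamma),\Id)\ge 1/2>0$, so for all sufficiently large $n$ the finite-group homomorphism $\phi_n\colon\Gamma\to\Sym(N_n)$ satisfies $\phi_n(\gamma)\ne 1$. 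Since $\gamma$ was arbitrary, $\Gamma$ is residually finite.

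The main obstacle, and the only point requiring care, is the size mismatch $m_n\le N_n$ between the sofic approximation and the cocycle produced by stability: one has to justify that extending $f_n$ to act trivially on the $N_n-m_n$ extra points neither inflates the distance to $\alpha_n$ (beyond a vanishing additive term) nor destroys the lower bound on $d_h(f_n(w_\gamma),\Id)$, both of which follow because $(N_n-m_n)/N_n\le \rho(\|\delta f_n\|)/\mu_S(s)\to 0$. Everything else is formal bookkeeping using the bi-invariance of the Hamming metric.
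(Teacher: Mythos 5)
The paper does not include its own proof of this proposition; it is stated as a citation to Glebsky--Rivera and Arzhantseva--P\u{a}unescu, described only as ``an easy to prove, yet quite insightful, observation.'' Your argument is the standard one from those references and is essentially correct: a sofic approximation is a $1$-cochain on the presentation complex with small coboundary norm, homomorphism stability produces a nearby genuine $1$-cocycle (equivalently, a homomorphism $\Gamma\to\Sym(N_n)$), and bi-invariance plus telescoping propagates closeness from generators to the fixed word $w_\gamma$, so the lower bound $\liminf d_h(f_n(w_\gamma),\Id)\ge 1/2$ transfers to the cocycle and forces $\phi_n(\gamma)\neq 1$ for large $n$. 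Your discussion of the size mismatch $m_n\le N_n$ is the right point to worry about, but is actually handled automatically by the paper's ``normalized Hamming distance with errors'' \eqref{eq:permutation_normalized_Hamming_distance}: $d_h(f_n(s),\alpha_n(s))\to 0$ already forces $(N_n-m_n)/N_n\to 0$ since the with-errors distance is always at least $1-m_n/N_n$. One minor gap in presentation: stability bounds the \emph{infimum} of distances to cocycles, so you should pick $\alpha_n$ realizing that distance within, say, $1/n$; this costs nothing but deserves a word. Overall this is the intended proof.
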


\subsection{Covering stability}\label{sec:covering}

A \emph{covering} of $\cX$ is a combinatorial map $f\colon \cY\to \cX$ which is a topological covering (cf.  Chapter 1.3 in \cite{Hatcher_Alg_Top}, and \cites{Dinur-Meshulam,CL_part1}). As long as $\cX$ is connected, the \emph{degree} of the covering is well defined and is equal to $|f^{-1}(x)|$ for any point $x\in \cX$. If the degree of the covering $\cY$ is $n\in \mathbb{N}$,  we call it an $n$-covering of $\cX$. 

    Let $\cX$ be a connected simplicial complex. There is a one to one correspondence between $n$-coverings of $G(\cX)$ and orbits of $1$-cochains of $\cX$ with $\Sym(n)$ coefficients under the action described in \eqref{eq:action_0-coch_on_1-coch}.
For every $1$-cochain $\alpha\colon \overrightarrow\cX(1)\to \Sym(n)$, the corresponding covering $f\colon \cG \to G(\cX)$ is defined to be:
    \begin{itemize}
    \item $\cG(0)=\cX(0)\times [n]\quad,\quad\overrightarrow\cG(1)=\overrightarrow\cX(1)\times [n]$;
    \item $\forall x\xrightarrow{e}y\in \overrightarrow\cX(1),i\in [n]\ \colon \ \ \tau(e,i)=(y,i),\ \iota(e,i)=(x,\alpha(e).i),\ \overline{(e,i)}=(\bar e,\alpha(e).i);$\footnote{Note that the permutation $\alpha(e)$ tells us how the fiber over the terminal point $y$ is mapped to the fiber over the origin point $x$ and not the other way around. This is because of our choice of left actions.}
    \item $\forall x\in \cX(0),e\in \overrightarrow\cX(1),i\in[n]\ \colon \ \ f(x,i)=x,\quad f(e,i)=e.$
\end{itemize}
On the other hand, if $f\colon \cG\to G(\cX)$ is an $n$-covering, then one can construct a $1$-cochain $\alpha\colon \overrightarrow\cX(1)\to \Sym(n)$ as follows:
     For every $x\in \cX(0)$, $|f^{-1}(x)|=n$. Hence we can label the vertices of $f^{-1}(x)$ by $\{(x,i)\}_{i=1}^n$. Note that for each vertex there are $n!$ ways of choosing these labels. Now, for every $e\in \overrightarrow\cX(1)$ and $e'\in f^{-1}(e)$ define $e'=(e,i)$ if $\tau(e')=(y,i)$.  Then, $\alpha(e).i$ is the second coordinate of $\iota(e,i)$. The different choices of labeling for the fibers $f^{-1}(x)$ would give rise to different $1$-cochains \textbf{in the same orbit} of the action of $0$-cochains.
     This correspondence sends $1$-cocycles to  $n$-coverings of the complex $\cX$. Futhermore, $1$-coboundaries are in correspondence with disjoint unions of the base complex $\cX$. Lastly, this correspondence translates $\rho$-cocycle stability to a topological robustness of coverings --- if for a covering $\cY$  of the underlying graph $G(\cX)$ most polygons in $\cX$ lift to closed paths in $\cY$, then $\cY$ is close (in an appropriate metric on graphs) to  (the $1$-skeleton of) a covering of $\cX$.   
     
\begin{lem}\label{lem:hamming_dist_cochains_and_epxansion_in_covering}
    Let $\cY$ be a $2$-dimensional simplicial complex. Let $\alpha\colon \overrightarrow\cY(1) \to \Sym(n)$ and $\beta\colon  \overrightarrow\cY(1) \to \Sym(N)$ be two $1$-cochains of $\cY$, where $N\geq n$. Let $\alpha\times\beta\colon \overrightarrow\cY(1) \to \Sym(n\times N)$ be the cochain satisfying $\alpha\times \beta (e).(i,j)=(\alpha(e).i,\beta(e).j)$. Let $\cA$ be the covering of $G(\cY)$ associated with $\alpha\times \beta$ and equipped with the uniform lift of $\mu_1$ and $\mu_0$.  Let $D=\{(y,i,i)\mid y\in\cY(0),i\in [n]\}\subseteq \cA(0)$ be the ``diagonal'', and $f\colon \cA(0)\to \FF_2$ its indicator function. Then,
    \[
        d_h(\alpha,\beta)=\frac{n}{N}\cdot \frac{\Vert \delta f\Vert}{2\mu_0(D)}+1-\frac{n}{N},
    \]
    and as $\nicefrac{\Vert \delta f\Vert}{2\mu_0(D)}\leq 1$, it implies
    \[
     d_h(\alpha,\beta)\geq \frac{\Vert \delta f\Vert}{2\mu_0(D)}.
    \]
\end{lem}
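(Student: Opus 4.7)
The plan is a direct computation that unpacks the covering structure of $\cA$ on both sides and compares them term by term.

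First, I would record the relevant combinatorial data of the covering $\cA$ associated with the product cochain $\alpha\times\beta\colon \overrightarrow\cY(1)\to\Sym(nN)$. Its vertex fiber over $y\in\cY(0)$ is $\{(y,i,j):i\in[n],j\in[N]\}$, and an oriented edge $(e,i,j)\in \overrightarrow\cA(1)$ lying over $x\xrightarrow{e}y$ has terminal vertex $(y,i,j)$ and initial vertex $(x,\alpha(e).i,\beta(e).j)$, by the formulas recalled in Section~\ref{sec:covering}. Since the measures on $\cA$ are uniform lifts of $\mu_0,\mu_1$, every vertex in the fiber over $y$ has measure $\mu_0(y)/(nN)$, and summing over the $n$ diagonal elements in each fiber gives $\mu_0(D)=1/N$, hence $2\mu_0(D)=2/N$.

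The core of the argument is a case analysis computing $\Vert \delta f\Vert$. An edge $(e,i,j)$ lies in the support of $\delta f$ iff exactly one of its endpoints belongs to $D$: the endpoint $(y,i,j)$ is in $D$ iff $i=j$, while $(x,\alpha(e).i,\beta(e).j)$ is in $D$ iff $\alpha(e).i=\beta(e).j$. Fix $e$ and $i\in[n]$, and set $j^*:=\beta(e)^{-1}(\alpha(e).i)\in[N]$, which is well defined because $\alpha(e).i\in[n]\subseteq[N]$. If $\alpha(e).i=\beta(e).i$, then $j^*=i$, so the two indicators change at the same value of $j$; both endpoints land simultaneously in $D$ (at $j=i$) or simultaneously outside $D$ (otherwise), and no $j$ contributes. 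If $\alpha(e).i\neq\beta(e).i$, then $j^*\neq i$, and precisely the two values $j=i$ and $j=j^*$ place exactly one endpoint in $D$, each contributing a single boundary edge. Hence the $\mu_1^{\cA}$-mass of coboundary edges above $e$ equals $2(n-k(e))\cdot \mu_1(e)/(nN)$, where $k(e):=|\{i\in[n]:\alpha(e).i=\beta(e).i\}|$.

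Summing over $e$ gives $\Vert \delta f\Vert=\frac{2}{nN}\sum_e\mu_1(e)(n-k(e))$, while directly from the definition of the normalized Hamming distance, $d_h(\alpha,\beta)=1-\Ex_{e\sim\mu_1}[k(e)]/N$. Combining these two expressions together with $2\mu_0(D)=2/N$ yields the claimed identity
\[
d_h(\alpha,\beta)=\frac{n}{N}\cdot\frac{\Vert\delta f\Vert}{2\mu_0(D)}+1-\frac{n}{N}
\]
by elementary algebra. The inequality $d_h(\alpha,\beta)\geq \Vert\delta f\Vert/(2\mu_0(D))$ then follows by rewriting the difference as $(1-n/N)\bigl(1-\Vert\delta f\Vert/(2\mu_0(D))\bigr)$, which is non-negative since $n\leq N$ and $\Vert\delta f\Vert/(2\mu_0(D))\leq 1$. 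The only step that requires genuine care is the two-case count of how many $j\in[N]$ make $(e,i,j)$ a coboundary edge for a fixed $(e,i)$; everything else is bookkeeping.
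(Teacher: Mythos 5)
Your proof is correct and takes essentially the same approach as the paper's: you compute $\mu_0(D)=1/N$, run exactly the same case analysis on whether $\alpha(e).i=\beta(e).i$ (finding $0$ or $2$ contributing values of $j$, identified via $j^*=\beta(e)^{-1}(\alpha(e).i)$), and combine to get the identity. The only stylistic difference is that you package the count as $k(e)$ and verify the inequality by factoring the difference as $(1-n/N)(1-\Vert\delta f\Vert/(2\mu_0(D)))$, whereas the paper keeps the probabilities explicit, but the underlying argument is identical.
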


\begin{proof}
Recall that 
\[
\begin{split}
    d_h(\alpha,\beta)&=\Pro_{\substack{e\sim \mu_1\\i\in[N]}}[\alpha(e).i\neq\beta(e).i]\\
    &=\frac{n}{N}\cdot \Pro_{\substack{e\sim \mu_1\\i\in[n]}}[\alpha(e).i\neq\beta(e).i]+\frac{N-n}{N}.
\end{split}
\]
Thus,
\[
d_h(\alpha,\beta)-1+\frac{n}{N}=\frac{n}{N}\Pro_{\substack{e\sim \mu_1\\i\in[n]}}[\alpha(e).i\neq\beta(e).i].
\]
        By the definition of the uniform lift of $\mu_0$ to $\cA$, $\mu_0(D)=\frac{1}{N}$, since $D$ contains for every $y\in \cY(0)$ exactly $\frac{n}{N\cdot n}$ of the fiber above it. By the definition of the uniform lift of $\mu_1$ to $\cA$, 
    \[
    \begin{split}
         \Vert \delta f\Vert &=\Pro_{e\sim \mu_1}\Pro_{(i,j)\in [n]\times [N]}[f(\iota(e,i,j))\neq f(\tau(e,i,j))]\\
         &=\frac{1}{nN}\sum_{\substack{e\in \overrightarrow\cY(1)\\i\in [n],j\in [N]}}\mu_1(e){\bf 1}_{f(\iota(e),\alpha(e).i,\beta(e).j)\neq f(\tau(e),i,j)}\\
         &=(\diamond).
    \end{split}
    \]
    Now, if $i= j$, then $(\tau(e),i,j)\in D$, and ${\bf 1}_{f(\iota(e),\alpha(e).i,\beta(e).j)\neq f(\tau(e),i,j)}=1$ if and only if $\alpha(e).i\neq \beta(e).i$. 
    On the other hand, if $i\neq j$, then ${\bf 1}_{f(\iota(e),\alpha(e).i,\beta(e).j)\neq f(\tau(e),i,j)}=1$ if and only if $\alpha(e).i = \beta(e).j$. Combining both cases, for every $i$ and $e$,  the sum over $j$'s contributes either $0$, when $\alpha(e).i=\beta(e).i$, or $2$, when $\alpha(e).i\neq\beta(e).i$. Hence,
    \[
    \begin{split}
        (\diamond)=\frac{1}{nN}\sum_{\substack{e\in \overrightarrow\cY(1)\\i\in [n]}}2\mu_1(e){\bf 1}_{\alpha(e).i\neq \beta(e).i}.
    \end{split}
    \]
    Therefore, 
    \[
    \begin{split}
    \frac{\Vert \delta f \Vert}{\mu_0(D)}&=2\Pro_{\substack{e\sim \mu_1\\ i\in[n]}}[\alpha(e).i\neq \beta(e).i]\\
    \end{split}
    \]
    and we conclude.
\end{proof}

\begin{rem}
Though Lemma \ref{lem:hamming_dist_cochains_and_epxansion_in_covering} is  technical, it is quite fundamental. In some sense, it should have appeared in \cites{CL_part1,CL_part2} beforehand, as an observation. The level technique which was used in the proof of Proposition 4.3 in \cite{CL_part2}  is a special case of this Lemma. 

This idea of comparing cochains using a common diagonal action is similar to the way one compares representations by analysing their tensor product. The cochain $\alpha\times \beta$ is essentially the tensor product  of $\alpha$ and $\beta$. See Proposition 2.4 in \cite{BeckerLubotzky} for a similar result.
\end{rem}

\subsection{Local spectral expansion}\label{sec:spectral_local_expansion}
This section discusses local spectral expansion, which using a local to global result a la Garland \cite{garland1973p}, specifically in the form known as Oppenheim's trickling down theorem \cite{oppenheim2018local}, 
 implies global spectral expansion.
 We follow  \cite{Dikstein_lecture_notes_trickling_down} in our presentation --- see also Appendix A of \cite{harsha2022note}, and the original paper by Oppenheim for the uniform case \cite{oppenheim2018local}. 

Let $\cX$ be a connected $2$-dimensional simplicial complex. Assume $\mu_2$ is a fully supported distribution on the triangles of $\cX$. Furthermore, assume $\mu_1$ and $\mu_0$ are descendent (as in Definition \ref{defn:descending_measures}) from $\mu_2$, and are fully supported. This implies in particular that the complex $\cX$ is \emph{pure}: Every edge (and vertex) participates in some triangle. The link of a cell  $\sigma\in \cX$ is the simplicial complex
\begin{equation}\label{eq:defn_link}
    \cX_\sigma=\{\tau\in \cX\mid \tau\cup \sigma\in \cX, \tau \cap \sigma=\emptyset\}.
\end{equation}
As this paper focuses on  two dimensional complexes, mostly vertex links will be used, which are themselves graphs (and sometimes edge links will be used, which are collections of vertices). The links inherit a distribution on their edges as follows: For every vertex $v$ and edge $e\in \cX_v$, we have $\mu_{v,1}(e)=\frac{\mu_2(v\cup e)}{\sum_{e'\in \cX_v(1)}\mu_2(v \cup e')}$. The measure on vertices of $\cX_v$ descends from $\mu_{v,1}$. Given a simplical complex $\cX$, we can define an adjacency operator $A$ on the complex valued functions on its vertices: Given $f\colon \cX(0)\to \mathbb{C}$, $$Af(v)=\Ex_{u\sim \mu_{v,0}}[f(u)],$$
namely, we sample an edge $vu$ according to $\mu_1$ conditioned on the edge containing $v$. Furthermore, we choose the following inner product on $\mathbb{C}^{\cX(0)}$:
\[
\langle f,g\rangle=\Ex_{v\sim \mu_0}[\overline{f(v)}g(v)].
\]
The adjacency operator $A$ has the following properties:
    $\bullet$ The constant functions are eigenfunctions of it with eigenvalue $1$.
    $\bullet$ It is self adjoint with respect to the chosen inner product.

\begin{defn}\label{defn:spectral_expansion}
    We say that $\cX$ is a \emph{$\lambda$-spectral expander} if the second eigenvalue of $A$ is bounded by $\lambda$ from above. We say that $\cX$ is a \emph{$\lambda$-local spectral expander} if the links $\cX_v$ are $\lambda$-spectral expanders for all $v\in \cX(0)$.
\end{defn}

\begin{fact}[Oppenheim's trickling down theorem, \cites{Dikstein_lecture_notes_trickling_down,oppenheim2018local}]\label{fact:trickle}
   Let $\cX$ be a pure, connected, $2$-dimensional simplicial complex with fully supported descendent measures on its cells.  Assume also that $\cX$ is a $\lambda$-local spectral expander for some $\lambda<\frac{1}{2}$ --- namely, that for each vertex $v\in \cX(0)$, the link $\cX_v$ is a $\lambda$-spectral expander. Then $\cX$ is a $\frac{\lambda
}{1-\lambda}$-spectral expander.
\end{fact}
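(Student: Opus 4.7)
The plan is to exploit the classical local-to-global identity that rewrites the global quadratic form $\langle Af,f\rangle$ as an average of the local quadratic forms on vertex links, and then to apply the hypothesis $\lambda<1/2$ link-by-link. First I would set up the key identity, then use the local spectral bound to derive a single global inequality, and finally plug in an eigenfunction.

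\emph{Step 1 (key identity).} For $f\colon \cX(0)\to\mathbb{R}$ and $v\in \cX(0)$, let $f_v$ denote the restriction of $f$ to the neighbors of $v$ inside the link $\cX_v$, and let $A_v$ be the adjacency operator on $\cX_v$ equipped with its descending measures $\mu_{v,1},\mu_{v,0}$. Using the descending-measure hypothesis, I would first verify the two identities
\[
\langle Af,f\rangle = \Ex_{v\sim\mu_0}\langle A_v f_v, f_v\rangle_v \quad\text{and}\quad \Ex_{v\sim\mu_0}\|f_v\|_v^2=\|f\|^2.
\]
Both follow by unfolding the definitions: $\langle Af,f\rangle = \Ex_{e\sim\mu_1}[f(u)f(v)] = \Ex_{\tau\sim\mu_2}\Ex_{e\subset\tau}[f(u)f(v)]$, while $\Ex_v\langle A_v f_v,f_v\rangle_v = \Ex_{\tau\sim\mu_2}\Ex_{v\in\tau}[f(u)f(w)]$ with $\{u,w\}=\tau\setminus\{v\}$; both sides simply average $f(x)f(y)$ over the three unordered pairs $\{x,y\}\subset\tau$ when $\tau\sim\mu_2$. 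The identity $\Ex_v\|f_v\|_v^2=\|f\|^2$ follows analogously from the fact that sampling $v\sim\mu_0$ and then $u\sim\mu_{v,0}$ is the same as sampling an edge $\{u,v\}\sim\mu_1$ and forgetting the endpoint labels.

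\emph{Step 2 (local spectral step).} Fix $v$ and orthogonally decompose $f_v = c(v)+g_v$ in $L^2(\cX_v,\mu_{v,0})$, where $c(v) = \Ex_{u\sim\mu_{v,0}}[f(u)] = (Af)(v)$ and $g_v\perp \mathbf{1}$. Since $A_v\mathbf{1}=\mathbf{1}$ and the second eigenvalue of $A_v$ is at most $\lambda$,
\[
\langle A_v f_v,f_v\rangle_v = c(v)^2 + \langle A_v g_v,g_v\rangle_v \le c(v)^2 + \lambda\|g_v\|_v^2 = (1-\lambda)c(v)^2 + \lambda\|f_v\|_v^2,
\]
the last equality using $\|g_v\|_v^2 = \|f_v\|_v^2 - c(v)^2$. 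Averaging over $v\sim\mu_0$ and invoking Step~1 yields the clean global inequality $\langle Af,f\rangle \le (1-\lambda)\|Af\|^2 + \lambda\|f\|^2$.

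\emph{Step 3 (eigenfunction and conclusion).} Apply this inequality to an eigenfunction $f\perp\mathbf{1}$ with $Af=\mu f$; connectedness of $\cX$ ensures that $1$ is a simple eigenvalue of $A$, so $\mu<1$ strictly. Substituting $\langle Af,f\rangle=\mu\|f\|^2$ and $\|Af\|^2 = \mu^2\|f\|^2$ gives $(1-\lambda)\mu^2 - \mu + \lambda \ge 0$, whose roots are $\mu=1$ and $\mu=\lambda/(1-\lambda)$. Since $\lambda<1/2$ forces $\lambda/(1-\lambda)<1$ and we have $\mu<1$, the quadratic forces $\mu\le\lambda/(1-\lambda)$, which is exactly the claimed bound. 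The main obstacle I expect is the bookkeeping in Step~1 --- keeping track of the normalization constants $2$ and $3$ coming from the sizes of edges and triangles when relating $\mu_0,\mu_1,\mu_2$ to their link counterparts $\mu_{v,0},\mu_{v,1}$; once these identities are in place, the rest is a one-line spectral computation followed by an elementary quadratic argument.
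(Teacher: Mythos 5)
Your proof is correct and is essentially the standard local-to-global spectral argument that appears in the sources cited by the paper (Oppenheim's original paper and Dikstein's lecture notes); the paper itself states this as a Fact with citations and does not reproduce a proof, so there is nothing to compare against beyond noting that your argument matches the classical one. All three steps check out: the two averaging identities in Step~1 follow from the descending-measure hypothesis exactly as you unfold them (both reduce to averaging over a triangle $\tau\sim\mu_2$ and a pair or a vertex within $\tau$); the link-wise decomposition $f_v=c(v)\mathbf{1}+g_v$ with $c(v)=(Af)(v)$ and the resulting inequality $\langle Af,f\rangle\le(1-\lambda)\|Af\|^2+\lambda\|f\|^2$ is sound; and the quadratic $(1-\lambda)\mu^2-\mu+\lambda\ge 0$ with roots $1$ and $\lambda/(1-\lambda)$, together with $\mu<1$ from connectedness, yields the claimed bound.
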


\begin{prop}[Weighted Cheeger inequalities: The lower bound. See Appendix A in \cite{CL_part2}]\label{fact:weighted_Cheegr_lower_bound}
    If $\cX$ is a $\lambda$-spectral expander, then $h_0(\cX,\FF_2)\geq 1-\lambda$.
\end{prop}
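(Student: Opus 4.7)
The plan is to reduce the problem to a real-valued Rayleigh-quotient computation for the adjacency operator $A$ on $\cX(0)$. Fix any $f\colon \cX(0)\to \FF_2$ with $\|\delta f\|\ne 0$, set $a=\mu_0(f^{-1}(0))$ and $b=\mu_0(f^{-1}(1))=1-a$, and assume without loss of generality $a\le \nicefrac{1}{2}\le b$. Since $\cX$ is connected, the only $\FF_2$-valued $0$-cocycles are the two constants, so $d(f,Z^0(\cX,\FF_2))=\min(a,b)=a$. Now identify $f$ with its $\{0,1\}$-valued real lift $\tilde f\colon\cX(0)\to\mathbb{R}$. The descendence of $\mu_0$ from $\mu_1$ gives $\Ex_{e\sim\mu_1}\bigl[\tfrac{1}{2}(\tilde f(u)^2+\tilde f(v)^2)\bigr]=\Ex_{v\sim\mu_0}[\tilde f(v)^2]=b$ (where $e=\{u,v\}$), and unpacking $(\tilde f(u)-\tilde f(v))^2$ yields
\[
\|\delta f\|=\Pro_{e\sim\mu_1}[f(u)\ne f(v)]=\Ex_{e\sim\mu_1}\bigl[(\tilde f(u)-\tilde f(v))^2\bigr]=2b-2\langle\tilde f,A\tilde f\rangle.
\]

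Next, project onto $\mathbf{1}^\perp$. Put $g=\tilde f-b\cdot\mathbf{1}$, so that $\langle g,\mathbf{1}\rangle=0$. Because $\mathbf{1}$ is an eigenvector of $A$ with eigenvalue $1$ and $A$ is self-adjoint, a direct expansion gives $\|g\|^2=b-b^2=ab$ and $\langle g,Ag\rangle=\langle\tilde f,A\tilde f\rangle-b^2$. The $\lambda$-spectral expansion hypothesis states that the spectrum of $A$ on $\mathbf{1}^\perp$ is bounded from above by $\lambda$, so $\langle g,Ag\rangle\le\lambda\|g\|^2=\lambda ab$. Substituting back,
\[
\|\delta f\|\ge 2b-2(b^2+\lambda ab)=2ab(1-\lambda).
\]

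Dividing by $d(f,Z^0(\cX,\FF_2))=a$ and using $b\ge \nicefrac{1}{2}$ gives $\|\delta f\|/d(f,Z^0(\cX,\FF_2))\ge 2b(1-\lambda)\ge 1-\lambda$. Taking the infimum over all non-constant $f$ yields $h_0(\cX,\FF_2)\ge 1-\lambda$. The proof is essentially a manipulation of definitions; the only points to note are that the descendence of $\mu_0$ from $\mu_1$ is exactly what makes the bilinear-form identity $\Ex_{e\sim\mu_1}[\tilde f(u)\tilde f(v)]=\langle\tilde f,A\tilde f\rangle$ go through, and that the one-sided definition of $\lambda$-spectral expansion bounds $\langle g,Ag\rangle$ from above --- precisely the direction needed. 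There is no genuine obstacle.
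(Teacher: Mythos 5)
Your proof is correct, and it is the standard Rayleigh-quotient derivation of the easy direction of the (weighted) Cheeger inequality: lift $f$ to $\{0,1\}$, write $\|\delta f\|$ as a quadratic form, center against $\mathbf 1$, and invoke the spectral gap. The paper outsources the proof to Appendix A of the cited reference, which carries out essentially this same computation; your use of the descending-measure identity $\mu_0(v)=\tfrac12\sum_{e\supset v}\mu_1(e)$ to equate $\Ex_{e\sim\mu_1}[\tilde f(u)\tilde f(v)]$ with $\langle\tilde f,A\tilde f\rangle$ and $\tfrac12\Ex_{e\sim\mu_1}[\tilde f(u)^2+\tilde f(v)^2]$ with $\Ex_{v\sim\mu_0}[\tilde f(v)^2]$ is exactly the point that makes the weighted version work, and the observation that only a one-sided bound on the Rayleigh quotient is needed is correct. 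No gap.
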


\begin{cor}\label{cor:local_exp_implies_exp_of_coverings}
    Let $\cX$ be a $\lambda$-local spectral expander for some $\lambda<\frac{1}{2}$. Then, for every connected covering $\cY$ of $\cX$, we have $h_0(\cY,\FF_2)\geq \frac{1-2\lambda}{1-\lambda}$.
\end{cor}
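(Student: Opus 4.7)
The plan is to bootstrap from the local spectral expansion of $\cX$ to local spectral expansion of $\cY$, then invoke Oppenheim's trickling down to get global spectral expansion of $\cY$, and finally apply the weighted Cheeger lower bound.

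First, I would verify that each vertex link of $\cY$ is isomorphic as a weighted graph to a vertex link of $\cX$. Since the covering map $f\colon\cY\to\cX$ is a local isomorphism of CW complexes, for every $\tilde v\in\cY(0)$ the map $f$ restricts to a combinatorial isomorphism $\cY_{\tilde v}\xrightarrow{\sim}\cX_{f(\tilde v)}$. Under the uniform lift of $\mu_2$ from $\cX$ to $\cY$, the weights on the triangles containing $\tilde v$ are proportional to the weights on the triangles containing $f(\tilde v)$, so the induced measures $\mu_{\tilde v,1}$ and $\mu_{f(\tilde v),1}$ agree under this identification. Consequently $\cY_{\tilde v}$ is a $\lambda$-spectral expander because $\cX_{f(\tilde v)}$ is. In other words, $\cY$ is itself a $\lambda$-local spectral expander.

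Next, I would confirm that $\cY$ meets the hypotheses of Fact~\ref{fact:trickle}: purity and full support of the descendent measures are inherited from $\cX$ by the same local-isomorphism argument (every edge or vertex of $\cY$ lies above one of $\cX$, and the lifted measures descend correctly); connectedness of $\cY$ is assumed. Applying Fact~\ref{fact:trickle} with $\lambda<\tfrac{1}{2}$, we obtain that $\cY$ is a $\tfrac{\lambda}{1-\lambda}$-spectral expander.

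Finally, invoking the weighted Cheeger inequality of Proposition~\ref{fact:weighted_Cheegr_lower_bound} to $\cY$ gives
\[
h_0(\cY,\FF_2)\ \geq\ 1-\frac{\lambda}{1-\lambda}\ =\ \frac{1-2\lambda}{1-\lambda},
\]
as required. The only substantive point is the first step, namely identifying the vertex links of a covering with vertex links of the base, together with the matching of the descendent measures; everything else is a direct citation of the two facts quoted just above the corollary.
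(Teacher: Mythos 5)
Your proof is correct and follows the paper's argument exactly: links are preserved under coverings (with matching weights), so $\cY$ is a $\lambda$-local spectral expander; then apply Oppenheim's trickling down (Fact~\ref{fact:trickle}) using connectedness of $\cY$, and finally the weighted Cheeger lower bound (Proposition~\ref{fact:weighted_Cheegr_lower_bound}). The only difference is that you spell out the link-preservation and measure-matching step in more detail than the paper, which states it in one line.
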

\begin{proof}
    Since links are preserved by coverings, $\cY$ is a $\lambda$-local spectral expander. Since $\cY$ is connected as well, it satisfies the conditions of Fact \ref{fact:trickle}, and thus it is a $\frac{\lambda}{1-\lambda}$-spectral expander. By Proposition \ref{fact:weighted_Cheegr_lower_bound}, we deduce that $h_0(\cY,\FF_2)\geq 1-\frac{\lambda}{1-\lambda}=\frac{1-2\lambda}{1-\lambda}$.
\end{proof}

\section{\textbf{Structural properties of Linial--Meshulam complexes}}\label{sec:prop_of_LM_complexes}

This section collects various a.a.s.\ properties of complexes sampled in the Linial--Meshulam model.
We will repeatedly use the following concentration of measure lemma. It is a version of  the well known Chernoff bound \cite{chernoff1952measure}:

\begin{lem}[Chernoff bound, cf.\ Theorem 3.3 in \cite{Chernoff_ODonnell} or Theorems 4.4 and 4.5 in \cite{mitzenmacher2017probability}]\label{lem:Chernoff_bound} Let $k$ be a positive integer and $M>0$ a constant.
    Let $\{X_i\}_{i=1}^k$ be a collection of  independent and identically distributed random variables, where $0\leq X_i\leq M$ and $\Ex[X_i]=\mu$.  Then, for every $0\leq \eps\leq 1$, we have $$\Pro\left[\left|\sum X_i-k\mu\right|> \eps k\mu\right]\leq 2e^{-\eps^2k\mu/3M}.$$
\end{lem}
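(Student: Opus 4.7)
The plan is to use the classical moment-generating-function method (the original Chernoff approach) and reduce the problem to the case $M=1$ by rescaling. First I would set $Y_i := X_i/M \in [0,1]$ with mean $p := \mu/M$, so that the target inequality becomes the statement
\[
\Pro\!\left[\left|\sum_{i=1}^k Y_i - kp\right| > \eps k p\right] \le 2\exp(-\eps^2 k p/3),
\]
valid for arbitrary $[0,1]$-valued i.i.d.\ variables of mean $p$, for $0\le\eps\le 1$.

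For the upper tail, I would apply Markov's inequality to $\exp(t\sum_i Y_i)$ for a parameter $t>0$ to be optimized:
\[
\Pro\!\left[\sum_i Y_i \ge (1+\eps)kp\right] \le e^{-t(1+\eps)kp}\,\Ex[e^{tY_1}]^k.
\]
The main MGF estimate comes from convexity: since $Y_1\in[0,1]$, we have $e^{tY_1}\le 1+Y_1(e^t-1)$, hence $\Ex[e^{tY_1}]\le 1+p(e^t-1)\le \exp(p(e^t-1))$. Substituting and optimizing at $t=\log(1+\eps)$ yields the sharp bound
\[
\Pro\!\left[\sum_i Y_i \ge (1+\eps)kp\right] \le \exp\!\left(-kp\cdot \varphi(\eps)\right),\qquad \varphi(\eps):=(1+\eps)\log(1+\eps)-\eps.
\]

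The only slightly delicate point, and the step I expect to require a bit of care, is the elementary inequality $\varphi(\eps)\ge \eps^2/3$ on $[0,1]$. I would verify it by setting $g(\eps)=\varphi(\eps)-\eps^2/3$, noting $g(0)=g'(0)=0$, and observing that $g'(\eps)=\log(1+\eps)-2\eps/3$ is non-negative on $[0,1]$ (its derivative changes sign at $\eps=1/2$, but $g'(0)=0$ and $g'(1)=\log 2-2/3>0$ force $g'\ge 0$ throughout), so $g$ is non-decreasing and hence non-negative. A symmetric argument for the lower tail, applying Markov to $\exp(-t\sum_i Y_i)$ with $t>0$, produces the analogous bound $\exp(-kp\cdot\psi(\eps))$ with $\psi(\eps):=(1-\eps)\log(1-\eps)+\eps\ge \eps^2/2\ge \eps^2/3$. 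Combining the two tails by a union bound gives the stated factor of $2$, and undoing the rescaling ($kp=k\mu/M$) recovers the exponent $\eps^2 k\mu/(3M)$ in the claim.
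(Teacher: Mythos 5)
The paper does not prove this lemma; it states it as a citation to standard references (O'Donnell's notes and Mitzenmacher--Upfal). Your proof is correct and is essentially the textbook MGF derivation found in those sources: rescale to $[0,1]$, bound the moment generating function via the chord inequality $e^{tY}\le 1+Y(e^t-1)$, optimize $t$ to get the rate functions $\varphi(\eps)=(1+\eps)\log(1+\eps)-\eps$ and $\psi(\eps)=(1-\eps)\log(1-\eps)+\eps$, then lower-bound both by $\eps^2/3$ on $[0,1]$ (your convexity argument for $\varphi$ and the Taylor bound $\psi\ge\eps^2/2$ are both sound), and finish with a union bound. The rescaling step $kp=k\mu/M$ correctly recovers the stated exponent, so nothing is missing.
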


\begin{cor}[Basic properties of $Y(n,p)$]\label{cor:basic_prop}
Let $\eps,\eta>0$. The following are immediate consequences of Lemma \ref{lem:Chernoff_bound}:
    \begin{enumerate}[label=\textcolor{black}{\arabic*.}, ref=\arabic*.]
        \item \textbf{Number of triangles in $\cY$}: 
   If $p=\omega(n^{-3})$ and $\cY\sim Y(n,p)$, then a.a.s
  \[
\left\vert  \nicefrac{|\cY(2)|}{p\binom{n}{3}}-1\right\vert\leq \eps.
  \]
  \item \textbf{Edge degree in $\cY$}:  Recall that for a $2$-dimensional complex $\cY$ and an edge $xy\in \cY(1)$, the link $\cY_{xy}$ is $0$-dimensional, and it consists of the set of vertices $z\in \cY(0)$  for which $xyz\in \cY(2)$, namely $\cY_{xy}=\{z\in \cY(0) \mid xyz\in \cY(2)\}$. Recall also that for $\cY\sim Y(n,p)$, its $1$-skeleton is the complete graph on $[n]$, and thus every $x\neq y\in [n]$ form an edge.\label{clause:2_basic_prop_Ynp} 
  
Now, if $p=n^{-1+\eta}$ and $\cY\sim Y(n,p)$, then a.a.s.
    \[
\forall x\neq y\in [n]\ \colon \ \ \left\vert\nicefrac{|\cY_{xy}|}{pn}-1\right\vert\leq \eps.
    \]
    \item \textbf{The links as  Erd{\H{o}}s--R{\'e}nyi  graphs}:  It is immediate from their definitions, that for $x\in [n]$ and $\cY\sim Y(n,p)$, the link $\cY_x$ is distributed as a $G(n-1,p)$ graph. In particular, from the previous clause, one can deduce that the links in the $p=n^{-1+\eta}$ regime are ``essentially regular'', namely every vertex has approximately $np$ many neighbors.
    \item \textbf{The descendant $\mu_1$ is close to the uniform distribution}
From the first two clauses we also deduce the following. Let $p=n^{-1+\eta}$ and $\mu_2$ be the uniform distribution on the triangles of $\cY\sim Y(n,p)$. Then, a.a.s,
    \[
    \forall x\neq y\in [n]\ \colon\ \ \left\vert \mu_1(xy)\cdot \binom{n}{2}-1\right \vert\leq \eps,
    \]
    where $\mu_1$ is the distribution descendant from $\mu_2$ (Definition \ref{defn:descending_measures}).
    \end{enumerate}
\end{cor}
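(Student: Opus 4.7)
The plan is to dispatch each of the four clauses by a direct application of Lemma \ref{lem:Chernoff_bound}, working up from the global triangle count to the per-edge link sizes, then to the induced link model and finally to the edge measure.

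For clause 1, the count $|\cY(2)|$ is a sum of $k=\binom{n}{3}$ i.i.d.\ Bernoulli$(p)$ variables, so I would apply Lemma \ref{lem:Chernoff_bound} with $M=1$ and $\mu=p$. The failure probability is at most $2\exp\!\left(-\eps^{2} p\binom{n}{3}/3\right)$, which tends to $0$ exactly when $p\binom{n}{3}\to\infty$, i.e.\ when $p=\omega(n^{-3})$. For clause 2, I would fix a pair $x\neq y\in[n]$ and note that $|\cY_{xy}|=\sum_{z\neq x,y}\mathbf{1}\{xyz\in\cY(2)\}$ is a sum of $n-2$ i.i.d.\ Bernoulli$(p)$'s, so the same Chernoff bound yields failure probability at most $2\exp(-\eps^{2}(n-2)p/3)$. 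With $p=n^{-1+\eta}$ this is $2\exp(-\Omega(n^{\eta}))$, which comfortably beats a union bound over the $\binom{n}{2}$ edges.

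Clause 3 requires no probabilistic work: for a fixed vertex $x$, the triangles through $x$ are in bijection with the edges of $\cY_{x}$, and each such triangle is independently present with probability $p$; this is the definition of $G(n-1,p)$. The essential regularity follows by specializing clause 2. For clause 4, I would combine clauses 1 and 2 with the observation that since $\mu_{2}$ is uniform on $\cY(2)$ and each triangle contains three edges, the descendant measure satisfies
\[
\mu_{1}(xy)=\frac{|\cY_{xy}|}{3|\cY(2)|}.
\]
On the a.a.s.\ event that both clauses hold with tolerance $\eps'$ (taken small enough in terms of $\eps$), substituting the approximations $|\cY_{xy}|\approx pn$ and $|\cY(2)|\approx p\binom{n}{3}$ and using $(n-2)\binom{n}{2}=3\binom{n}{3}$ gives $\mu_{1}(xy)\cdot\binom{n}{2}\approx 1$.

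The only mildly delicate point is the union bound in clause 2: we need the per-edge concentration $\exp(-\Omega(n^{\eta}))$ to dominate the $n^{2}$ pairs, which it does for any fixed $\eta>0$. Clause 4 then needs all three events of clauses 1, 2, and (implicitly) the denominator nonvanishing to hold simultaneously, which follows from a final application of the union bound over three a.a.s.\ events. No genuine obstacle is expected beyond bookkeeping the tolerances.
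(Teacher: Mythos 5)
Your proof is correct and spells out exactly the argument that the paper leaves implicit when it states these are ``immediate consequences'' of the Chernoff bound (Lemma \ref{lem:Chernoff_bound}); there is no competing argument in the paper to compare against. Two minor bookkeeping points worth making explicit: in clause 2 the Chernoff bound is around the mean $(n-2)p$ rather than $pn$, so the transfer to the normalization by $pn$ uses $(n-2)/n \to 1$ and therefore requires starting from a slightly smaller $\eps'$; and in clause 4 the identity $\mu_1(xy) = |\cY_{xy}|/(3|\cY(2)|)$, which you correctly derive from uniformity of $\mu_2$ and the fact that each triangle has three edges, is the crux, after which the algebraic identity $(n-2)\binom{n}{2}=3\binom{n}{3}$ closes the estimate. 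Both of these you already flag as tolerance bookkeeping, so the proposal is complete.
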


\begin{defn}\label{defn:balls_and_spheres}
    Given a vertex $x$ in a graph, let $B_\ell(x)$ be the ball of radius $\ell$ around it, namely all vertices who have a path of length at most $\ell$ to $x$. The sphere of radius $\ell$ around $x$, which we denote by $S_\ell(x)$, is the difference between $B_\ell(x)$ and $B_{\ell-1}(x)$, namely all vertices whose shortest path to $x$ is of length $\ell$. Finally, let $US_\ell(x)$ be the unique sphere of radius $\ell$ around $x$, namely all vertices in $S_\ell(x)$ that have a \emph{single} path of length $\ell$ to $x$. We denote their sizes by $b_\ell(x)=|B_\ell(x)|, s_\ell(x)=|S_\ell(x)|$ and $us_\ell(x)=|US_\ell(x)|$.
\end{defn}

\begin{claim}[Spheres of small radius in Erd{\H{o}}s--R{\'e}nyi  graphs]\label{claim:expansion_Gnp}
    Let $0<\eps,\eta<1$, $p=n^{-1+\eta}$ and $\cX\sim G(n,p)$. Let $x\in [n]$ be a fixed vertex, and let $\ell< \nicefrac{1}{\eta}$ be a positive integer. Then, 
    \[
\Pro[|\nicefrac{s_\ell(x)}{(pn)^\ell}-1|>  \eps]\leq 4ne^{-\eps^2\cdot 2^{-2/\eta}\cdot np/24}\ ,
    \]
    where $s_\ell(x)$ is the size of the sphere $S_\ell(x)$ of radius $\ell$ around $x$, as in Definition \ref{defn:balls_and_spheres}.
\end{claim}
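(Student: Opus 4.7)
The plan is to prove the claim by induction on $\ell$, exposing the random graph in breadth-first search (BFS) layers around $x$ and applying Lemma~\ref{lem:Chernoff_bound} at each layer. For the base case $\ell=1$, the sphere $S_1(x)$ is the neighborhood of $x$, so $s_1(x)\sim\mathrm{Binom}(n-1,p)$ with mean $(n-1)p$, and Lemma~\ref{lem:Chernoff_bound} applied with $M=1$ and $k=n-1$ directly yields the desired concentration around $np$.

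For the inductive step, I would invoke the principle of deferred decisions: once BFS has exposed $B_{\ell-1}(x)$, the edges from $S_{\ell-1}(x)$ to $V:=[n]\setminus B_{\ell-1}(x)$ are still independent $\mathrm{Ber}(p)$. Hence, conditionally on $B_{\ell-1}(x)$, the indicators $X_v:=\mathbf{1}[v\in S_\ell(x)]$ for $v\in V$ are i.i.d.\ $\mathrm{Ber}(q)$ with $q=1-(1-p)^{s_{\ell-1}(x)}$, so $s_\ell(x)=\sum_{v\in V}X_v\sim \mathrm{Binom}(n-b_{\ell-1}(x),\,q)$. Using the elementary bounds $ps_{\ell-1}(1-\tfrac{1}{2}ps_{\ell-1})\leq q\leq ps_{\ell-1}$ together with $b_{\ell-1}(x)/n=O(n^{(\ell-1)\eta-1})$, this conditional mean equals $np\cdot s_{\ell-1}(x)\cdot(1+\kappa_j)$ for an explicit $\kappa_j=o(1)$; here the assumption $\ell\eta<1$ is exactly what forces both correction terms, as well as $ps_{\ell-1}(x)\approx n^{\ell\eta-1}$, to be small. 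Combining with the inductive hypothesis on $s_{\ell-1}(x)$ and invoking Lemma~\ref{lem:Chernoff_bound} again gives concentration of $s_\ell(x)$ around $(np)^\ell$.

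The real work is balancing the compounding multiplicative errors so that the final exponent matches the announced factor $2^{-2/\eta}$. I would introduce events $A_j:=\{|s_j(x)/(np)^j-1|\leq \epsilon_j\}$ with a geometric schedule $\epsilon_j:=\epsilon\cdot 2^{-(\ell-j+1)}$, so that $\epsilon_\ell=\epsilon/2$ gives the target final tolerance while the smallest (and most binding) tolerance $\epsilon_1=\epsilon\cdot 2^{-\ell}\geq \epsilon\cdot 2^{-1/\eta}$ is still large enough for Chernoff to succeed on the very first layer. After absorbing $\kappa_j=o(1)$, the allowed Chernoff tolerance at step $j$ is $\delta_j\asymp\epsilon\cdot 2^{-(\ell-j+1)}$, and the conditional failure probability is at most $2\exp(-\delta_j^2(np)^j/C)$ for an absolute constant $C$. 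The quantity $\delta_j^2(np)^j$ is minimized at $j=1$, where $4^{-\ell}\geq 4^{-1/\eta}=2^{-2/\eta}$, giving exponent at least $\epsilon^2\cdot 2^{-2/\eta}\cdot np/C$; a union bound over the $\ell\leq 1/\eta\leq n$ layers then supplies the prefactor $4n$ with room to spare on the constant $24$.

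The main obstacle I anticipate is arithmetic rather than conceptual: verifying that the schedule $\{\epsilon_j\}$ makes the recursion
\[
    (1+\epsilon_{j-1})(1+\delta_j)(1+\kappa_j)\leq 1+\epsilon_j
\]
hold at every level for all $n$ sufficiently large, while simultaneously yielding the precise constants in the final exponent. There is ample slack in the Chernoff bound and in the $o(1)$ correction $\kappa_j$ to absorb the absolute constants; however, more naive schedules (such as $\epsilon_j\propto j/\ell$) would produce a polynomial rather than an exponential dependence on $1/\eta$, so the geometric choice of $\epsilon_j$ is essential to reach the stated bound.
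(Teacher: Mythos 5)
Your argument is correct and would reach the claimed bound, but it organizes the BFS exploration differently from the paper. The paper applies Chernoff once per \emph{popped vertex}: it shows that every out-degree $\cZ_t$ in the BFS lies in $[(1-\eps/2^\ell)np,\,(1+\eps/2^\ell)np]$ with a single, uniform tolerance --- by dominating $\cZ_t$ by the degree $\cD_t$ for the upper tail, and by $\cZ_t\sim \mathrm{Binom}(\cN_{t-1},p)$ with $\cN_{t-1}\geq n-b_\ell(x)$ for the lower tail --- and then exploits the elementary observation that $a\leq\cZ_t\leq b$ for all $t$ forces $a^\ell\leq s_\ell(x)\leq b^\ell$; the union bound is over the $n$ popped vertices, which is precisely where the $4n$ prefactor comes from. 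You instead apply Chernoff once per \emph{layer}, using the conditional law $s_j\sim\mathrm{Binom}\bigl(n-b_{j-1}(x),\,1-(1-p)^{s_{j-1}(x)}\bigr)$ together with a geometric tolerance schedule $\eps_j\propto 2^{-(\ell-j+1)}$, pushing the tightest tolerance onto the first (smallest-mean) layer, where $\ell<1/\eta$ converts $4^{-\ell}$ into the announced $2^{-2/\eta}$. This buys you an $\ell$-term union bound in place of an $n$-term one, at the cost of the Poisson-approximation correction $\kappa_j$ and the nested-conditioning bookkeeping $\Pro[A_j^c\cap\bigcap_{i<j}A_i]$; the paper's uniform-tolerance, per-vertex version avoids the geometric schedule and the $\kappa_j$ term entirely. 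Both routes produce the stated exponent with constants to spare, and the paper's $4n$ prefactor is generous enough to cover either union bound.
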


\begin{proof}
The idea is to use a Breadth First Search (BFS) algorithm on a $ G(n,p)$ random graph, which induces a random process that we can analyze (cf.\ Section 11.5 of \cite{alon2016probabilistic}). 
Recall that in a BFS algorithm, one starts with a fixed vertex, in our case $x$, and an empty queue. Throughout the algorithm, the vertices can be in three states: \emph{Neutral}, which means it has not entered the queue yet; \emph{On}, which means it is currently in the queue; \emph{Off}, which means it was popped out of the queue. Hence, before the algorithm starts, all vertices are in the Neutral state. In the first step, $x$ is put into the queue (which moves it from the Neutral to the On state), and a $0$ is appended to it (which marks its distance from $x$). From that point on, at each step, we pop out the first vertex in the queue $y$ with its appended integer $r$, and add to (the end of) the queue all Neutral neighbors of $y$ in the graph with  $r+1$ appended to them. The algorithm usually halts when the queue is empty, but in our case we will halt it once the popped out  vertex has an appended integer $\ell$. At this point, all non-Neutral vertices consist of the ball of radius $\ell$ around $x$, and the appended ineteger to each of them marks their distance from $x$. In particular, the vertices in the On state when the algorithm halts consist of the sphere of radius $\ell$. 

We now associate random variables with various parameters along the BFS algorithm. First, let $\cY_t$ be the number of vertices in the queue (namely in the On state) at time step $t$, and let $\cY_0=1$, indicating that $x$ is the only vertex in the queue in the beginning of the process. Let $\cZ_t$ be the number of vertices added to the  queue in time $t$,  let $\cN_t$ be the number of Neutral vertices at time $t$, and let $\cD_t$ be the degree of the vertex popped in time $t$. Then, $\cN_0=n-1$, and the random variables satisfy the recursive relations $\cY_t=\cY_{t-1}-1+\cZ_t$ and $\cN_t=\cN_{t-1}-\cZ_t$, as well as the obvious inequality $\cD_t\geq \cZ_t$. 

Note that, if for every $t$ we have that $a\leq \cZ_t\leq b$ for some $a,b>0$, then $a^\ell\leq s_\ell(x)\leq b^{\ell}$. To see this, let us denote by $y_t$ the vertex that was popped in time $t$, and note that from the properties of the BFS algorithm we have $s_i(x)=\sum_{y_t\in S_{i-1}(x)}  \cZ_t$. Combined with the assumed bound on $\cZ_t$, we deduce that $s_{i-1}(x)a\leq s_i(x)\leq s_{i-1}(x)b$. As $s_0(x)=1$, we deduce this observation.
Hence, if we can show that
\begin{equation}\label{eq:main_consequence_sphere_bound}
    \Pro[\forall t\colon (1-\nicefrac{\eps}{2^{\ell}})np \leq  \cZ_t\leq (1+\nicefrac{\eps}{2^{\ell}})np]\geq 1-4ne^{-\eps^2\cdot 2^{-2/\eta}\cdot np/24}\ ,
\end{equation}
then the Claim is deduced, as $(1+\nicefrac{\eps}{2^{\ell}})^\ell\leq 1+\eps$ and $(1-\nicefrac{\eps}{2^{\ell}})^\ell\geq 1-\eps$. To the end of proving \eqref{eq:main_consequence_sphere_bound}, let us notice that 
\begin{align}
    &1- \Pro[\forall t\colon (1-\nicefrac{\eps}{2^{\ell}})np \leq  \cZ_t\leq (1+\nicefrac{\eps}{2^{\ell}})np]=\\
    &\Pro[\exists t\colon \cZ_t>(1+\nicefrac{\eps}{2^\ell})np]\label{eq:reduction_of_3.1}
    +\Pro[\forall t\colon \cZ_t\leq (1+\nicefrac{\eps}{2^\ell})np\  \land\  \exists t'\colon \cZ_{t'}< (1-\nicefrac{\eps}{2^\ell})np]\ .\notag
\end{align}
So, we deduce the claim by bounding each of the summands on the right hand side of the above equation.

By applying Chrenoff (Lemma \ref{lem:Chernoff_bound}) and a union bound, and using the inequality $\cD_t\geq \cZ_t$, we can deduce that 
\begin{equation}\label{eq:upper_bound_degree}
\begin{split}
    \Pro[\exists t\colon \cZ_t>(1+\nicefrac{\eps}{2^\ell})np]&\leq \Pro[\exists t\colon \cD_t\geq (1+\nicefrac{\eps}{2^\ell})np]\\
    &\leq 2ne^{-\eps^2\cdot np/{3\cdot2^{2\ell}}}\\
    &\underset{\substack{\ell\leq \nicefrac{1}{\eta}\\ 3\le 24}}{\leq} 2ne^{-\eps^2\cdot 2^{-2/\eta}\cdot np/{24}}\ .
\end{split}
\end{equation}
Let us assume that the complement of this event occurred, namely $(1+\nicefrac{\eps}{2^\ell})np$ is an upper bound on the degree of all vertices in the graph.  Hence, by our previous analysis, for every $i\leq \ell$ we have  $s_i(x)\leq (1+\eps)(np)^i$. 
The size of the ball $b_\ell(x)$ of radius $\ell$ around $x$ can thus be bounded by
\[
b_\ell(x)=\sum_{i=0}^\ell s_i(x)\leq (1+\frac{\ell}{np})(1+\eps)(np)^\ell.
\]
As $\ell$ is a fixed positive integer, $\frac{\ell}{np}\leq \frac{\eps}{2}$ for some large enough $n$. In addition, as $\ell<\nicefrac{1}{\eta}$, $(np)^{\ell}=n^{\ell \eta}$  is smaller than $Cn$ for every constant $C$ and large enough $n$. Thus, for large enough $n$, 
\[
 b_\ell(x)\leq (1+2\eps)n^{\ell \eta}\leq \frac{\eps}{2\cdot 2^{\nicefrac{1}{\eta}}} \cdot n \ .
\]
When the BFS algorithm terminates (in our case), the non-Neutral vertices are exactly those from the ball of radius $\ell$ around $x$, and thus the number of Neutral vertices ends up being  $n-b_\ell(x)$. Hence, throughout the BFS algorithm we have $\cN_t\geq n-b_\ell(x)\geq (1-\frac{\eps}{2\cdot 2^{\nicefrac{1}{\eta}}})n$. As our graph is from $G(n,p)$, every edge between the recently popped $y$ and the currently Neutral vertices appears independently with probability $p$, and thus $\cZ_t$ is distributed binomially as  $Bin(\cN_{t-1},p)$. Therefore, by Chernoff and a union bound, we have that
\begin{equation}\label{eq:lower_bound_degree_out_in_BFS}
\begin{split}
    \Pro[\forall t\colon \cZ_t\leq (1+\nicefrac{\eps}{2^\ell})np\  \land\  \exists t'\colon \cZ_{t'}\leq (1-\nicefrac{\eps}{2^{\nicefrac{1}{\eta}}})np]
    &\leq 2ne^{-\left(\frac{\eps}{2\cdot 2^{\nicefrac{1}{\eta}}} \right)^2\cdot\overbrace{(1-\frac{\eps}{2\cdot 2^{\nicefrac{1}{\eta}}})}^{\geq \nicefrac{1}{2}} np/3}\\
    &\leq 2ne^{-\eps^2\cdot 2^{-2/\eta}\cdot np/24}\ .
\end{split}
\end{equation}
The combination of \eqref{eq:upper_bound_degree} and \eqref{eq:lower_bound_degree_out_in_BFS} provides the appropriate upper bounds on the summands in \eqref{eq:reduction_of_3.1}. Therefore,  \eqref{eq:main_consequence_sphere_bound} is deduced and so is the claim itself.
\end{proof}

\begin{rem}\label{rem:unique_sphere_bound}
    Recall the notion of the unique sphere around $x$ from Definition \ref{defn:balls_and_spheres}.  A bound similar to that of Claim \ref{claim:expansion_Gnp} holds for $us_\ell(x)$ instead of $s_\ell(x)$, namely under the same setup 
     \[
\Pro[|\nicefrac{us_\ell(x)}{(pn)^\ell}-1|>  4\eps]\leq 4ne^{-\eps^2\cdot 2^{-2/\eta}\cdot np/24}\ .
    \]
    This is because, on the one hand, $us_\ell(x)\leq s_\ell(x)$. On the other hand, if for every $y\in S_\ell(x)$ we let $w(y)$ be the number of paths of length $\ell$ from $x$ to $y$, then 
    \[
s_\ell(x)-us_\ell(x)\leq \sum_{y\in S_\ell(x)} w(y)-1 \quad \textrm{and}\quad \sum_{y\in S_\ell(x)} w(y)\leq b_\ell(x).
    \]
    Combining the two inequalities, we deduce that  $s_\ell(x)-us_\ell(x)\leq b_\ell(x)-s_\ell(x)$.  But the proof of Claim \ref{claim:expansion_Gnp} shows that 
    \[
    \underbrace{b_\ell(x)}_{\leq (1+2\eps)(np)^\ell}-\underbrace{s_\ell(x)}_{\geq (1-\eps)(np)^{\ell}}\leq 3\eps (pn)^\ell
    \]
    with probability of at least $1-4ne^{-\eps^2\cdot 2^{-2/\eta}\cdot np/24}$. Thus, 
    \[
    us_\ell(x)= s_\ell(x)- (s_\ell(x)-us_\ell(x))\geq (1-\eps)(pn)^\ell- 3\eps(pn)^\ell\geq (1-4\eps)(pn)^\ell    
    \]
    with the same probability and we are done. 
\end{rem}
\begin{claim}\label{claim:intersection_of_independent_sets}
    Let $0<\alpha,\beta<1, \eps>0$ be constants. If two sets $A$ and $B$ of sizes 
    \[
     (1-\eps)n^\alpha\leq |A|\leq (1+\eps)n^\alpha\ ,\ (1-\eps)n^\beta\leq |B|\leq (1+\eps)n^\beta\ ,
    \]
 are sampled uniformly from $[n]$, then we have that 
 \[
\Pro[\nicefrac{|A\cap B|}{n^{\alpha+\beta-1}}-1|\leq 9\eps]\leq 4e^{-\eps^2 n^{\alpha+\beta-1}/6}\ .
 \]
 In other words, the probability that $|A\cap B|$ is not $(1+o(1))n^{\alpha+\beta-1}$ is exponentially small (in $n$) given that $\alpha+\beta>1$.
\end{claim}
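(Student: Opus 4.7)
The plan is to first condition on the exact values $|A|=a$ and $|B|=b$ lying in the prescribed ranges; the event in question depends only on $|A\cap B|$, so it suffices to prove the bound conditionally on the sizes and then average. By the symmetry of the uniform distribution on subsets of fixed size, after conditioning I may fix $A$ as an arbitrary subset of $[n]$ of size $a$ and treat $B$ as a uniformly random subset of size $b$. Under this coupling, $|A\cap B|$ is distributed as a $\mathrm{Hypergeometric}(n,a,b)$ random variable, with mean $\mu = ab/n$.

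Using the assumed bounds on $a$ and $b$, one directly checks
\[
(1-\eps)^2\, n^{\alpha+\beta-1}\ \le\ \mu\ \le\ (1+\eps)^2\, n^{\alpha+\beta-1},
\]
so that $|\mu - n^{\alpha+\beta-1}|\le 3\eps\, n^{\alpha+\beta-1}$ provided $\eps$ is smaller than some absolute constant. I would then appeal to a multiplicative Chernoff-type bound for the hypergeometric distribution, namely $\Pro[|X-\mu|>\delta\mu]\le 2e^{-\delta^2\mu/3}$ for $0<\delta<1$. Setting $\delta = 3\eps$ and combining with the previous estimate yields $|X - n^{\alpha+\beta-1}| \le 9\eps\, n^{\alpha+\beta-1}$ with probability at least $1-2e^{-3\eps^2\mu}$. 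Finally, using $\mu\ge (1-\eps)^2 n^{\alpha+\beta-1}\ge \tfrac12 n^{\alpha+\beta-1}$ (for, say, $\eps\le\tfrac14$), the probability is at least $1-4e^{-\eps^2 n^{\alpha+\beta-1}/6}$, as required. Since this bound is uniform in $(a,b)$ within the prescribed ranges, averaging over the distribution of $(|A|,|B|)$ preserves it.

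The only subtlety is that the paper's Lemma~\ref{lem:Chernoff_bound} is stated for sums of i.i.d.\ bounded variables, whereas here the indicators $\{\mathbf{1}[i\in B]\}_{i\in A}$ are \emph{negatively correlated} because the size $|B|$ is held fixed. This is bypassed in one of two standard ways: (i) Hoeffding's classical observation that the moment generating function of the hypergeometric is dominated term-by-term by that of the corresponding binomial, which transfers the Chernoff bound from Lemma~\ref{lem:Chernoff_bound} verbatim; or (ii) the Chernoff bound for negatively associated families, which applies directly since the indicators $\mathbf{1}[i\in B]$ are negatively associated. Either route is essentially a black box, so the main obstacle is purely notational.
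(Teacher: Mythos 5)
Your proof is correct and takes essentially the same underlying route as the paper, but packages the key step differently. The paper also reveals the elements of $A$ one by one (exactly as in your ``conditioning'' step), bounds the conditional probability that each new element lands in $B$ from above and below, and then invokes its own Chernoff Lemma (for i.i.d.\ variables) via a hand-built stochastic domination by a binomial --- this is precisely a self-contained, ad-hoc proof of the hypergeometric concentration inequality that you cite as a black box (Hoeffding's MGF-domination observation, or negative association). In other words, where you write ``$|A\cap B|$ is $\mathrm{Hypergeometric}(n,a,b)$, apply the hypergeometric Chernoff bound,'' the paper inlines a proof of that bound. Your version is cleaner at the level of exposition, at the cost of invoking an external result; the paper's is longer but stays entirely within the toolbox (Lemma~\ref{lem:Chernoff_bound}) it has already introduced, mirroring the structure of its earlier Claim~\ref{claim:expansion_Gnp}. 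Both proofs arrive at the same error bound $4e^{-\eps^2 n^{\alpha+\beta-1}/6}$.

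One minor point worth being explicit about: the arithmetic that turns $\delta=3\eps$ plus $|\mu-n^{\alpha+\beta-1}|\le 3\eps\, n^{\alpha+\beta-1}$ into the advertised $9\eps$ window requires $\eps$ to be smaller than an absolute constant (roughly $\eps\le 1/2$, so that $(1+3\eps)(1+\eps)^2\le 1+9\eps$). You acknowledge this implicitly (``provided $\eps$ is smaller than some absolute constant''), and the paper's own constant-chasing has the identical restriction, so this is not a gap in either argument --- but the Claim as stated does not restrict $\eps$, so it is worth noting that the conclusion, as in the paper, is only meaningful and the constants only valid for small $\eps$.
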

\begin{proof}
The proof is very similar in structure to that of Claim \ref{claim:expansion_Gnp}. We can think  of $B$ as being fixed, and then revealing the elements of $A$ one by one. In that case, if after $t$ steps $k$ of the revealed elements of $A$ were from $B$ and the other $t-k$ were from its complement, then the probability that the $t+1$ element is from $B$ is 
\[
\frac{|B|-k}{n-|B|-(t-k)}\leq \frac{|B|}{n-|B|-|A|}=\underbrace{\frac{|B|}{n}}_{\leq (1+\eps)n^{\beta-1}}\cdot \underbrace{\frac{n}{n-|B|-|A|}}_{\underset{n\to \infty}{\leq}1+\eps}  \leq (1+3\eps)n^{\beta-1}\ .
\]
Hence, $|A\cap B|$ is smaller than a random variable distributed as $Bin((1+\eps)n^\alpha,(1+3\eps)n^{\beta-1})$. Therefore, using this upper bound and Chernoff (Lemma \ref{lem:Chernoff_bound}),  we deduce that 
\[
\Pro[|A\cap B|\geq (1+9\eps)n^{\alpha+\beta-1}]\leq 2e^{-\eps^2 n^{\alpha+\beta-1}/3}.
\]
Therefore, we can assume that $|A\cap B|\leq (1+9\eps)n^{\alpha+\beta-1}$.  Under this assumption, throughout the process of revealing $A$, as the parameter $k$ is bounded by $|A\cap B|$, we have that the probability that the $t+1$ element of $A$ is in $B$ is
\[
\begin{split}
\frac{|B|-k}{n-|B|-(t-k)}&\geq \frac{(1-\eps)n^\beta-(1+9\eps)n^{\alpha+\beta-1}}{n}\\
&=n^{\beta-1}\underbrace{(1-\eps-(1+9\eps)n^{\alpha-1})}_{\underset{n\to \infty}{\geq}1-2\eps }\\
&\geq (1-2\eps)n^{\beta-1}\ .
\end{split}
\]
Hence, by this upper bound and Chernoff we can deduce that 
\[
\Pro[|A\cap B|\leq (1-5\eps)n^{\alpha+\beta-1}\mid |A\cap B|\leq (1+9\eps)n^{\alpha+\beta-1}]\leq 2e^{-\eps^2n^{\alpha+\beta-1}/6},
\]
which finishes the proof.
\end{proof}

\subsection*{BHK Fillings} 

\begin{defn}[Fillings, cf.\ \cite{babson2011fundamental}]\label{defn:filling}
    We say that a closed path $\pi$ in $\cX$ can be \emph{filled} by triangles $\Delta_1,...,\Delta_k\in \cX(2)$ if one can draw a Van Kampen diagram whose perimeter is $\pi$ and its cells are the aforementioned triangles. Namely, $\pi$ is a product of conjugates of $\Delta_1,...,\Delta_k$ in the fundamental group of $\cX$ --- which is equivalent to the existence of  paths $\sigma_1,...,\sigma_k$ in $G(\cX)$ such that
    \[
        \pi=\prod_{i=1}^k \sigma_i \Delta_i \bar \sigma_i.
    \]
    We call $k$ the \emph{volume} of the filling.

Given a triplet $x,y,z\in \cX(0)$ and a positive integer $\ell$, a length $\ell$ Babson--Hoffman--Kahle  filling with (edge) base $xy$ of the path $\pi=xyz$, or just $\ell$-BHK filling for short, is of the following form: There exists a length $\ell$ path $z=a_0\to a_1 \to ...\to a_\ell$ such that 
\[
xya_\ell\in \cX(2)\quad{\rm and}\quad \forall 1\leq i\leq \ell\ \colon\ \ a_{i-1}a_ix\ ,\ a_{i-1}a_iy\in \cX(2)\ .
\]
We call the triangle $xya_\ell$ the  (triangle) \emph{base} of the filling, while the triangles $a_{i-1}a_ix$ and $a_{i-1}a_iy$ are said to be of \emph{type $i$}. 
An $\ell$-BHK filling is said to be \emph{good} if the path $z=a_0\to...\to a_\ell$ is the only path of length $\leq k$ between $z$ and $a_\ell$, namely, it is the only $r$-BHK filling of $xyz$ with (triangle) base  $xya_{\ell}$ with $r\leq \ell$. 
\begin{figure}
\begin{center}
    \begin{tikzpicture}[scale=1.5]
    \node [draw, color=black, shape=circle] (x) at (-4,0){$x$};
    \node [draw, color=black, shape=circle] (y) at (4,0){$y$};
    \node [draw, color=black, shape=circle] (z) at (0,6){$z=a_0$};
    \node [draw, color=black, shape=circle] (w) at (0,1){$a_\ell$};
    \node [draw, color=black, shape=circle] (w1) at (0,2){$a_{\ell-1}$};
    \node [draw, color=black, shape=circle] (w-1) at (0,4.5){$a_{1}$};
     \draw[purple, -, solid] (x)--(y) node[midway,above]{};
\draw[purple, -, solid] (y)--(w) ;
\draw[purple, -, solid] (y)--(z) ;
\draw[purple, -, solid] (x)--(w) ;
\draw[purple, -, solid] (w1)--(w) ;
\draw[purple, -, solid] (w1)--(x) ;
\draw[purple, -, solid] (w1)--(y) ;
\draw[purple, -, solid] (w-1)--(x) ;
\draw[purple, -, solid] (w-1)--(y) ;
\draw[purple, -, solid] (z)--(w-1) ;
\draw[purple, -, dashed] (w1)--(w-1) ;
\draw[purple, -, solid] (z)--(x) ;
    \end{tikzpicture}
    \caption[]{An $\ell$-BHK filling of $xyz$. Compare to Figure 1 in \cite{babson2011fundamental}.}\label{fig:BHK_filling}
    \end{center}
    \end{figure}
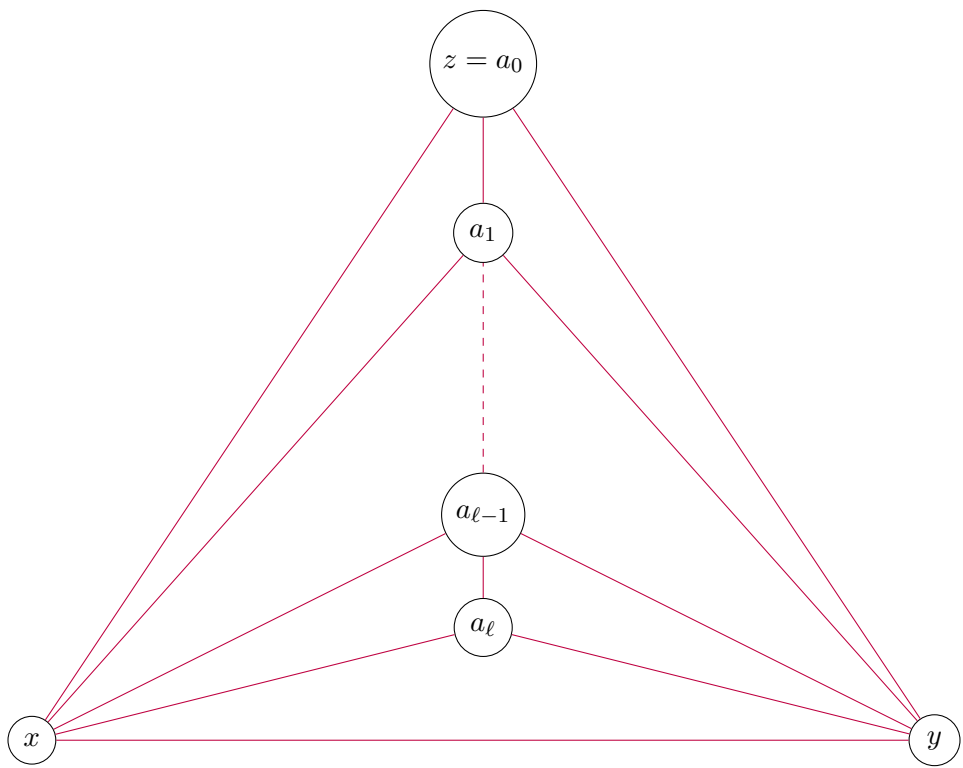
\end{defn}
\begin{rem}
If the path $a_0\to...\to a_\ell$ in an $\ell$-BHK filling (Definition \ref{defn:filling}) is simple, namely no vertex is visited more than once (as in the case of a good filling), then  an $\ell$-BHK filling of $xyz$ induces a filling of $xyz$ of volume $2\ell+1$ (see Figure \ref{fig:BHK_filling}). As every path can be simplified by removing cycles, every $\ell$-BHK filling induces a filling of $xyz$ of volume at most $2\ell+1$.
\end{rem}
The following few Claims aim to show that when $p=n^{-\nicefrac{1}{2}+\eta}$, the number of good $\ell$-BHK fillings in $\cY\sim Y(n,p)$ is well understood (for specific values of $\ell$), and that every triangle $\Delta\in \cY(2)$ participates in approximately the same number of such good fillings. 

\begin{claim}[Number of good $\ell$-BHK fillings of a specific triangle]\label{claim:number_of_BHK_fillings}
Let $\eps,\eta>0$, $p= n^{-\nicefrac{1}{2}+\eta}$, $\cY\sim Y(n,p)$ and $\ell=\lceil \nicefrac{1}{4\eta} \rceil$.  Then, for every  $x\neq y\neq z\in [n]$, the number of good $\ell$-BHK fillings of $\pi=xyz$ in $\cY$ is a.a.s.\ $(3+o(1))n^{(2\ell+1)\eta-\nicefrac{1}{2}}$.
\end{claim}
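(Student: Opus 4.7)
The proof strategy is to decompose good $\ell$-BHK fillings of $\pi = xyz$ by their edge base, which is one of the three edges $xy$, $yz$, $zx$. By symmetry it suffices to show that the number of good $\ell$-BHK fillings with a fixed base, say $xy$, is a.a.s.\ $(1+o(1)) n^{(2\ell+1)\eta - 1/2}$; summing over the three bases yields the factor of $3$. The three contributions are disjoint because the triangle base of an $\ell$-BHK filling with edge base $xy$ contains both $x$ and $y$, uniquely identifying the edge base.

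For the fixed base $xy$, introduce the \emph{double link graph} $G_{xy}$ on vertex set $[n]\setminus\{x,y\}$, whose edges are the pairs $\{u,v\}$ for which both $xuv, yuv \in \cY(2)$. Since distinct pairs $\{u,v\}$ correspond to disjoint pairs of triangles in $\cY$, $G_{xy}$ is distributed as $G(n-2, p^2)$; moreover, it is independent of the family of triangles $\{xyw : w\in[n]\setminus\{x,y\}\}$ and hence of $\cY_{xy}$. Under this identification, good $\ell$-BHK fillings of $xyz$ with base $xy$ are in bijection with
\[
US_\ell(z) \cap \cY_{xy},
\]
where $US_\ell(z)$ is the unique sphere of radius $\ell$ around $z$ \emph{in} $G_{xy}$: each $a_\ell$ in this intersection determines a unique path $z = a_0 \to \cdots \to a_\ell$ in $G_{xy}$ realizing the filling, the triangle base condition is $a_\ell \in \cY_{xy}$, and the ``good'' condition is exactly the uniqueness of the path of length $\le \ell$.

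To estimate the intersection, note that $p^2 = n^{-1+2\eta}$ and $\ell = \lceil 1/(4\eta) \rceil < 1/(2\eta)$ for $\eta \in (0, 1/2)$. Applying Remark~\ref{rem:unique_sphere_bound} to $G_{xy}$ with parameter $2\eta$ yields $|US_\ell(z)| = (1+o(1))(np^2)^\ell = (1+o(1)) n^{2\ell\eta}$, with failure probability $e^{-\Omega(n^{2\eta})}$. Conditional on $G_{xy}$, the set $\cY_{xy}$ is still an i.i.d.\ $p$-random subset of $[n]\setminus\{x,y\}$, so $|US_\ell(z) \cap \cY_{xy}|$ is binomially distributed with parameters $|US_\ell(z)|$ and $p$. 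Chernoff's inequality (Lemma~\ref{lem:Chernoff_bound}) then gives $|US_\ell(z)\cap \cY_{xy}| = (1+o(1))\, p \cdot |US_\ell(z)| = (1+o(1)) n^{(2\ell+1)\eta - 1/2}$, with failure probability exponentially small in $n^{(2\ell+1)\eta - 1/2} \ge n^{\eta}$.

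Both failure probabilities are of the form $e^{-n^c}$ for some $c > 0$ depending only on $\eta$, so a union bound over the $O(n^3)$ triples $(x,y,z)$ (and the three choices of base) preserves the a.a.s.\ conclusion uniformly. The main subtlety to handle carefully is the independence of $G_{xy}$ from $\cY_{xy}$: this is what allows the intersection to be computed as if $US_\ell(z)$ and $\cY_{xy}$ were genuinely independent random sets, which in turn is essential for the concentration step.
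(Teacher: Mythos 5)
Your argument is correct and follows essentially the same route as the paper: you identify the same auxiliary graph (your double link graph $G_{xy}$ is exactly the paper's $\cY_x\cap\cY_y$), invoke the same $G(n-2,p^2)$ identification, use Remark~\ref{rem:unique_sphere_bound} for $|US_\ell(z)|$, and exploit the independence of that graph from $\cY_{xy}$. The only (minor, and arguably cleaner) deviation is in the final concentration step: you condition on $G_{xy}$ and note $|US_\ell(z)\cap\cY_{xy}|\sim\mathrm{Bin}(|US_\ell(z)|,p)$, applying Chernoff directly, whereas the paper first estimates $|\cY_{xy}|$ via Corollary~\ref{cor:basic_prop} and then invokes Claim~\ref{claim:intersection_of_independent_sets} on the intersection of two independent random sets of prescribed sizes; both yield the same tail bounds and the same union bound over $O(n^3)$ triples.
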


\begin{proof}
    This proof is  a quantitative analogue of Lemma 2.2 in \cite{babson2011fundamental}.

    Let $\cY_x\cap \cY_y$ be the intersection of appropriate links in $\cY$, and $\cY_{xy}$ the link of the edge $xy$. 
It is straightforward that the intersection of links $\cY_x\cap \cY_y$ is distributed as an Erd{\H{o}}s--R{\'e}nyi graph $G(n-2,p^2)=G(n-2,n^{-1+2\eta})$ for every $x\neq y\in [n]$. By Claim \ref{claim:expansion_Gnp}, Remark \ref{rem:unique_sphere_bound}  and using a union bound, we can assume that for every $x\neq y\neq z\in [n]$, the unique sphere $US_\ell(z)$ of radius $\ell$ around $z$ in $\cY_x\cap \cY_y$  contains $(1+o(1))n^{2\eta \ell}$ many vertices.
By Corollary \ref{cor:basic_prop}, we can assume that $|\cY_{xy}|$, the size of the link of the edge $xy$, is $(1+o(1))n^{\nicefrac{1}{2}+\eta}$. Now, $\cY_x\cap\cY_y$ and $\cY_{xy}$ are \textbf{independent of each other}, and hence the intersection of the unique sphere of radius $\ell$ around $z$ in $\cY_x\cap \cY_y$ with the vertices in $\cY_{xy}$ satisfies the conditions of Claim \ref{claim:intersection_of_independent_sets}. Therefore, using a union bound, for every $x\neq y\neq z$, a.a.s.\ this intersection is of size $(1+o(1))n^{(2\ell+1)\eta-\nicefrac{1}{2}}$. As every vertex in this intersection plays the role of $a_\ell$ in a good $\ell$-BHK filling of $xyz$  with base $xy$, we showed that the number of such fillings is $(1+o(1))n^{(2\ell+1)\eta-\nicefrac{1}{2}}$. Since every triangle $xyz$ has 
    $3$ potential bases, the total number of good $\ell$-BHK fillings of it is $(3+o(1))n^{(2\ell+1)\eta-\nicefrac{1}{2}}$, as claimed.
\end{proof}

\begin{claim}[Upper bound on the number of good $\ell$-BHK fillings each triangle participates in]\label{claim:upper_bound_fillings_abc}
    Let $\eps,\eta>0$, $p=n^{-\nicefrac{1}{2}+\eta}$, $\cY\sim Y(n,p)$ and $\ell=\lceil \nicefrac{1}{4\eta} \rceil$.  Then a.a.s, for every  $a\neq b\neq c\in [n]$, the number of good $\ell$-BHK fillings in $\cY$ that $\pi=abc$ participates in is at most  $(6\ell+3+o(1))n^{2\eta\ell}$.
\end{claim}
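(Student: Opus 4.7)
The plan is to decompose the count according to which of the $2\ell+1$ triangles of a good $\ell$-BHK filling the given triangle $abc$ is identified with: either the base triangle $\{x,y,a_\ell\}$, or one of the two type-$i$ triangles $\{a_{i-1},a_i,x\}$ and $\{a_{i-1},a_i,y\}$ for some $i\in[\ell]$. The base role comes with $3$ designations (which vertex of $abc$ is $a_\ell$), and each type-$i$ role with $6$ designations (the apex $v\in\{x,y\}$ among $\{a,b,c\}$, and the ordering of the other two vertices as $(a_{i-1},a_i)$); the target is therefore $(1+o(1))n^{2\eta\ell}$ per designation.

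For the base role, once the vertex of $abc$ playing $a_\ell$ is fixed, the unordered pair $\{x,y\}$ is determined, and the good filling is specified by the starting vertex $a_0\in US_\ell(a_\ell)$ inside $\cY_x\cap\cY_y\sim G(n-2,p^2)$. Applying Claim \ref{claim:expansion_Gnp} and Remark \ref{rem:unique_sphere_bound} to the Erd\H{o}s--R\'enyi graph $\cY_x\cap\cY_y$, together with a union bound over the $O(n^3)$ triples $(x,y,a_\ell)$, gives $|US_\ell(a_\ell)|=(1+o(1))(np^2)^\ell=(1+o(1))n^{2\eta\ell}$ a.a.s., so the base contribution is at most $3(1+o(1))n^{2\eta\ell}$.

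For each type-$i$ role, fix a designation (say $v=x$, with $(a_{i-1},a_i)$ ordered). Goodness of the full walk from $a_0$ to $a_\ell$ implies uniqueness of its two sub-walks, so a completion is determined by the triple $(y,a_0,a_\ell)$ subject to $y\in\cY_{a_{i-1}a_i}$, $a_0\in US_{i-1}(a_{i-1})$ in $\cY_x\cap\cY_y$, and $a_\ell\in US_{\ell-i}(a_i)\cap\cY_{xy}$ in $\cY_x\cap\cY_y$. I would bound these three factors by $(1+o(1))np$ (Corollary \ref{cor:basic_prop}), $(1+o(1))(np^2)^{i-1}$ (Remark \ref{rem:unique_sphere_bound}), and $(1+o(1))p(np^2)^{\ell-i}$ (Claim \ref{claim:intersection_of_independent_sets}, using that $\cY_x\cap\cY_y$ and $\cY_{xy}$ depend on disjoint families of triangles of $\cY$ and are therefore independent). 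The product is $6(1+o(1))(np^2)^\ell=6(1+o(1))n^{2\eta\ell}$ per $i$; summing over $i\in[\ell]$ and adding the base contribution gives the desired $(6\ell+3+o(1))n^{2\eta\ell}$.

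The main obstacle I expect is that the application of Claim \ref{claim:intersection_of_independent_sets} to $|US_{\ell-i}(a_i)\cap\cY_{xy}|$ requires the size condition $\alpha+\beta=2\eta(\ell-i)+\tfrac12+\eta>1$, and for $i$ near $\ell$ this fails (in particular at $i=\ell$ where the sphere of radius $0$ is just $\{a_i\}$). In these edge cases I would merge factors by enumerating $(y,a_\ell)$ jointly: for instance at $i=\ell$ one has $a_\ell=a_i$ fixed and the constraint on $y$ becomes $y\in\cY_{a_{\ell-1}a_\ell}\cap\cY_{xa_\ell}$, an intersection of two independent link-of-edge sets of size $(1+o(1))np$ each, for which Claim \ref{claim:intersection_of_independent_sets} now applies (with $\alpha+\beta=1+2\eta>1$) and yields $(1+o(1))np^2=(1+o(1))n^{2\eta}$; combined with the backward-walk bound $(1+o(1))(np^2)^{\ell-1}$ this again produces $6(1+o(1))n^{2\eta\ell}$. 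For intermediate $i$, one counts the forward portion $(y,a_{i+1},\ldots,a_\ell)$ as a substructure in $\cY$ with $2(\ell-i)+2$ triangle conditions, whose expected count $n^{2\eta(\ell-i+1)}$ exactly compensates for the shortened backward walk $(np^2)^{i-1}$.
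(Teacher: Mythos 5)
Your decomposition by role (base with $3$ designations, each type $i\in\{1,\dots,\ell\}$ with $6$ designations, a per-designation target of $(1+o(1))n^{2\eta\ell}$) is exactly the paper's, and the three factors you multiply in the type-$i$ case give the right total. The genuine gap you flag, however, bites much harder than you estimate. The requirement $2\eta(\ell-i)+\tfrac12+\eta>1$ amounts to $\ell-i>\nicefrac{1}{4\eta}-\tfrac12$, and since $\ell=\lceil\nicefrac{1}{4\eta}\rceil<\nicefrac{1}{4\eta}+1$ this forces $i<\tfrac32$; so the per-$y$ application of Claim~\ref{claim:intersection_of_independent_sets} to $|US_{\ell-i}(a_i)\cap\cY_{xy}|$ is unavailable for every $i\ge2$ (and frequently for $i=1$ as well). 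The per-$y$ expected intersection is $o(1)$ in those cases, so there is no concentration. In other words, what you call the ``fix for intermediate $i$'' is not a patch for edge cases --- it is the main argument, and the per-$y$ route must be abandoned throughout.

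That fix is exactly what the paper does. The paper enumerates the pairs $(y,w)$ with $y\in\cY_{bc}$ (about $n^{\nicefrac12+\eta}$ choices) and $w\in US_{\ell-i}(c)$ in $\cY_a\cap\cY_y$ (about $n^{2\eta(\ell-i)}$ per $y$, by Claim~\ref{claim:expansion_Gnp} and Remark~\ref{rem:unique_sphere_bound}), and then applies Chernoff to the fraction of pairs with $ayw\in\cY(2)$. This step uses that $\{ayw\in\cY(2)\}$ is independent of the events that defined the pairs (those involve only triangles of the forms $bcy$, $auv$ and $yuv$ with $u,v\notin\{a,y\}$), and it yields $(1+o(1))p\cdot n^{2\eta(\ell-i)+\nicefrac12+\eta}=(1+o(1))n^{2\eta(\ell-i+1)}$ surviving pairs --- always $\omega(1)$, so the concentration is available. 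Multiplying by the backward-walk count $(1+o(1))(np^2)^{i-1}$ gives $(1+o(1))n^{2\eta\ell}$ per designation. This is the same ``count the forward portion jointly over $y$, then multiply'' idea you sketch at the end, and it covers your $i=\ell$ case uniformly (so the special $\cY_{a_{\ell-1}a_\ell}\cap\cY_{xa_\ell}$ argument, while valid, is unnecessary). With the joint enumeration used for all $i$ instead of only where the per-$y$ route visibly breaks, your proposal recovers the paper's proof.
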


\begin{proof}
    Recall Definition \ref{defn:filling}, and in particular the notion of a base triangle and triangles of type $i$. To bound the number of fillings $abc$ participates in, it is enough to bound the number of such fillings $abc$ participates in at each type (and sum them up). 
    
    If the triangle $abc$ is a base type, then either $ab,bc$ or $ac$ are the bases. Since this is symmetric, let us count in how many of them $ab$ is the base. Every unique path of length $\ell$ in $\cY_a\cap\cY_b$ from $c$ defines an $\ell$-BHK filling for which $abc$ is the base. Hence, by Claim \ref{claim:expansion_Gnp} and Remark \ref{rem:unique_sphere_bound}, the number of such fillings is $(1+o(1))(np^2)^\ell=(1+o(1))n^{2\eta\ell}$, and  $abc$ participates in $(3+o(1))n^{2\eta\ell}$ many $\ell$-BHK fillings as a base type triangle (a.a.s).

   In case the triangle $abc$ is of type $i$, we will construct outwards from $abc$ an $\ell$-BHK filling such that we cover all possible good ones (and will thus get an upper bound as required). Similar to the base case, there are symmetries to break, as either $a,b$ or $c$ are part of the filled up triangle. Let us assume $a$ is part of the filled up triangle, and that $bc$ plays the role of the edge $a_{i-1}a_i$ in the path $a_0\to...\to a_\ell$. There is another symmetry we need to break, as either $b$ or $c$ play the role of $a_{i-1}$ (this is an oriented edge). Assume $b$ plays the role of $a_{i-1}$. By Corollary \ref{cor:basic_prop}, the size of $\cY_{bc}$ is $(1+o(1))n^{\nicefrac{1}{2}+\eta}$.  For each $y\in \cY_{bc}$, $\cY_{a}\cap \cY_y$ is distributed as $G(n-2,p^2)$, and thus there are $(1+o(1))n^{2\eta(\ell-i)}$ many vertices $w$ in the unique sphere around $c$ in it. In total, we have $(1+o(1))n^{2\eta(\ell-i)+\nicefrac{1}{2}+\eta}$ many such pairs $(y,w)$, and by our choice they are all distinct. For each such pair, there is a $p=n^{-\nicefrac{1}{2}+\eta}$ chance that the triangle $ayw$ is included in $\cY$, and these are independent events. So, out of these, $(1+o(1))n^{2\eta(\ell-i+1)}$ induce a good $\ell-i$-BHK filling of $aby$. To complete such a $\ell-i$-BHK filling to an $\ell$-BHK filling, we just need to add a path of length $i-1$ from $b$ in $\cY_a\cap \cY_y$. As there are at most $(1+o(1))n^{2\eta(i-1)}$ many such paths, we get that there are at most $(1+o(1))n^{2\eta \ell}$ such fillings. All in all, $abc$ participates in at most $(6+o(1))n^{2\eta\ell}$ many good $\ell$-BHK fillings as a type $i$ triangle. 

   Combining the two previous bounds provides an upper bound of $(6\ell+3+o(1))n^{2\eta\ell}$-many good $\ell$-BHK fillings that $abc$ participates in, as required.

\end{proof}

\section{\textbf{The mid range: Soficity of hyperbolic groups}}\label{sec:res_fin_hyperbolic_groups}

\subsection{The sampled pair}
As mentioned in the introduction, we define a sampling procedure for a pair $\cY,\cZ$ such that with high probability they have some desired properties. The sampling procedure is as follows:
\begin{algorithm}[H]
\caption{Sampling $\cY,\cZ$}\label{alg:sample_pair}
\begin{algorithmic}[1]

\STATE Sample $\cY$ from $Y(n,p)$.
\STATE Choose a uniformly random triangle $\Delta\notin \cY(2)$.
\STATE{Add $\Delta$ to $\cY$ to get $\cZ$.}

\end{algorithmic}
\end{algorithm}

\begin{rem}
    By a standard coupling argument (cf.~Chapter 1.1 in \cite{frieze2016introduction}), as long as $p=\omega(n^{-3})$, we may assume that $\cZ$ is contained in a $Y(n,2p)$ complex.
\end{rem}

\begin{fact} [Lemmas 3.10 and 3.12 in \cite{babson2011fundamental}]
Let $\eta>0$. For every triplet $x\neq y\neq z\in [n]$, if $p=O(n^{-\nicefrac{1}{2}-\eta})$, then the path $x\to y\to z\to x$ is a.a.s.\ non-trivial in $\pi_1(\cY,x)$ for $\cY\sim Y(n,p)$.
\end{fact}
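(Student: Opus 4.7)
The plan is to argue via the contrapositive combined with a first-moment calculation on Babson--Hoffman--Kahle fillings. Suppose toward a contradiction that the loop $\pi = x \to y \to z \to x$ is null-homotopic in $\pi_1(\cY, x)$. Then either $xyz \in \cY(2)$ (so the loop bounds a single $2$-cell), or $\pi$ admits a Van Kampen diagram of positive volume whose $2$-cells are triangles in $\cY(2)$; in the latter case, fix a minimal-volume such diagram.

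The topological heart of the proof, which I treat as a black box from \cite{babson2011fundamental}, is the structural statement that any minimal Van Kampen diagram filling a triangular path in a $2$-dimensional simplicial complex must have the form of an $\ell$-BHK filling (Definition \ref{defn:filling}) for some integer $\ell \geq 1$. Granting this, the task reduces to ruling out, a.a.s., both the event $xyz \in \cY(2)$ and the existence of any $\ell$-BHK filling of $xyz$.

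Both are handled by elementary first-moment bounds. Clearly $\Pro[xyz \in \cY(2)] = p = O(n^{-1/2-\eta}) = o(1)$. For the BHK fillings: an $\ell$-BHK filling of $xyz$ is specified by a choice of base edge (at most $3$ options among $xy, yz, xz$) and an ordered sequence of auxiliary vertices $a_1, \ldots, a_\ell$ (at most $(n-3)^\ell$ options). Each such candidate requires the $2\ell+1$ prescribed triangles (the base plus two triangles of each type $i=1,\ldots,\ell$) to lie in $\cY$, an event of probability $p^{2\ell+1}$ by the independence of triangle inclusions in the Linial--Meshulam model. Hence
\[
\Ex\bigl[\#\{\ell\text{-BHK fillings of } xyz\}\bigr] \;\leq\; 3\,(n-3)^\ell\, p^{2\ell+1} \;\leq\; 3p\,(np^2)^\ell .
\]
Under $p = O(n^{-1/2-\eta})$ one has $np^2 = O(n^{-2\eta}) = o(1)$, so summing the geometric series gives
\[
\Ex\!\left[\sum_{\ell \geq 1}\#\{\ell\text{-BHK fillings of } xyz\}\right] \;\leq\; \frac{3p \cdot np^2}{1 - np^2} \;=\; O(np^3) \;=\; O(n^{-1/2 - 3\eta}) \;=\; o(1).
\]
Markov's inequality then yields that a.a.s.\ no $\ell$-BHK filling of $xyz$ exists for any $\ell \geq 1$, and combined with $xyz \notin \cY(2)$ a.a.s.\ we conclude that $\pi$ is non-trivial in $\pi_1(\cY, x)$.

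The main obstacle is the structural input cited above: that a minimal diagram bounded by a triangular path has the two-sided fan shape of an $\ell$-BHK filling. This is purely topological/combinatorial and is the genuine content of \cite{babson2011fundamental}; the probabilistic step, in contrast, is a one-line geometric series. An alternative route that avoids the structural theorem would be to bound directly the expected number of \emph{all} minimal Van Kampen diagrams of $xyz$ of volume $k$, but this requires enumerating Van Kampen diagrams by their combinatorial type and the BHK reduction is exactly what makes this enumeration tractable.
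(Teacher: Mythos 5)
Your probabilistic computation is fine as far as it goes, but the structural input you treat as a black box is not a theorem of Babson--Hoffman--Kahle, and is in fact false: not every planar Van Kampen diagram whose boundary is a $3$-cycle has the two-sided-fan shape of an $\ell$-BHK filling. A concrete counterexample: take the triangulated disk on vertices $x,y,z,w_1,w_2,w_3$ with triangles $zyw_1$, $zw_1w_2$, $zw_2w_3$, $zw_3x$, $yw_1w_2$, $yw_2x$, $w_2w_3x$. Its boundary is exactly the $3$-cycle $xyz$, yet for every choice of base edge the very first step of the BHK chain already fails (e.g.\ with base $xy$, the only $a_1$ with $za_1x\in D$ is $w_3$ while the only $a_1$ with $za_1y\in D$ is $w_1$; the same mismatch happens for bases $yz$ and $zx$). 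BHK's Lemma~2.2, which introduces the BHK shape, only asserts that such fillings \emph{exist} in the dense regime $p\gg n^{-1/2}$; it is used to prove simple-connectedness, not to classify all minimal diagrams. It says nothing of the kind you need here.

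The actual argument behind the cited Lemmas~3.10 and~3.12 of \cite{babson2011fundamental} proceeds differently and is substantially more involved. BHK first establish a local linear isoperimetric inequality for small subcomplexes of $Y(n,p)$ (essentially the first-moment statement that a.a.s.\ there is no subcomplex on a bounded number of vertices that is simultaneously ``too dense'' relative to its size and fills a short cycle), then invoke a Gromov-type local-to-global principle to conclude that every minimal Van Kampen diagram has area bounded linearly in its perimeter, and only then argue that a filling of the length-$3$ loop $xyz$ would have to be a bounded-size subcomplex that a.a.s.\ does not appear. This last step is a first-moment calculation, but it ranges over \emph{all} bounded-size fillings (of which the example above is one and BHK fillings are another family), and crucially the isoperimetric/hyperbolicity step is what rules out Van Kampen diagrams with non-injective maps to $\cY$, which could have arbitrarily large area and arbitrarily small image; your computation has no handle on those at all. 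Summing a geometric series over BHK fillings therefore does not establish the statement, and the gap between your sketch and the real proof is precisely the hyperbolicity machinery you were hoping to black-box away.
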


\begin{fact}[Theorem 1.5 in \cite{babson2011fundamental}]
    Let $\eta>0$.  If $p=O(n^{-\nicefrac{1}{2}-\eta})$, then  a.a.s.\ $\pi_1(\cY,*)$ is non-trivial and hyperbolic for $\cY\sim Y(n,p)$.
\end{fact}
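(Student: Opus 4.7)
The plan is a first-moment argument on reduced Van Kampen diagrams: both the non-triviality of $\pi_1(\cY,\ast)$ and a linear isoperimetric inequality (which by Gromov's theorem yields hyperbolicity) reduce to bounding the expected number of reduced planar disk diagrams of given area whose $2$-cells lie in $\cY$.

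For non-triviality, I would fix a triple $x,y,z\in[n]$ with $xyz\notin\cY(2)$ (a generic triple, since $p=o(1)$) and argue that the loop $\gamma=x\to y\to z\to x$ is a.a.s.\ not null-homotopic in $\cY$. If it were, there would exist a reduced disk Van Kampen diagram $D$ of some area $k\ge 3$ with boundary $\gamma$ whose $2$-cells are triangles of $\cY$. Euler's formula for a triangulated disk with $k$ interior triangular faces and boundary length $3$ gives $V_{\rm int}=(k-1)/2$ interior vertices. Fixing the boundary labels $x,y,z$, the number of labelings of the interior vertices by elements of $[n]$ is at most $n^{(k-1)/2}$, and each such labeled diagram appears as a subcomplex of $\cY$ with probability $p^k$. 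Using the standard bound $C^k$ for the number of reduced triangulated disk shapes of area $k$ (with a universal constant $C$), the first moment reads
\[
\Ex[\#\text{fillings of }\gamma] \;\le\; \sum_{k\ge 3}C^k n^{(k-1)/2}p^k \;=\; n^{-1/2}\sum_{k\ge 3}(C\sqrt{n}\,p)^k,
\]
which is $o(1)$ when $p=O(n^{-1/2-\eta})$ since then $\sqrt{n}\,p=O(n^{-\eta})$. Markov's inequality then yields a.a.s.\ non-triviality.

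For hyperbolicity, I would establish a linear isoperimetric inequality: there exists a constant $A=A(\eta)$ such that a.a.s.\ every null-homotopic loop $\gamma$ of length $\ell$ in $\cY$ admits a Van Kampen filling of area at most $A\ell$. The Euler formula with general boundary length gives $V_{\rm int}=1+(k-\ell)/2$; combining the $C^k$ shape bound, the at-most-$n^{\ell}$ count of possible boundary labelings, and the decay $(C\sqrt{n}\,p)^k$, one shows that the expected number of ``super-linear'' diagrams (area $k>A\ell$) is $o(1)$ once $A$ is chosen large enough depending on $\eta$. Gromov's characterization of hyperbolic groups by a linear isoperimetric inequality then gives hyperbolicity of $\pi_1(\cY,\ast)$.

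The hard part is the combinatorial bound $C^k$ on the number of reduced triangulated disk shapes of area $k$, together with the precise notion of ``reducibility'' needed to prevent double-counting diagrams with two triangles glued along a pair of edges (the usual Van Kampen cancellation moves). The second delicate issue is the uniform control in the hyperbolicity step: the factor $n^{\ell}$ coming from boundary labelings must be defeated by the exponential-in-$k$ decay uniformly in $\ell$, which is exactly where the strict gap $\eta>0$ in the exponent $p=O(n^{-1/2-\eta})$ is used essentially.
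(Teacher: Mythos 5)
The paper cites this statement as Theorem~1.5 of Babson--Hoffman--Kahle and provides no proof of its own, so the comparison here is against BHK's original argument.

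Your sketch has the right overall shape --- a first-moment computation over planar Van Kampen diagrams, Euler's formula to count the free interior vertices, and a Gromov-type criterion to pass from an isoperimetric inequality to hyperbolicity. The Euler count $V_{\rm int}=1+(k-\ell)/2$ is correct, and the decomposition of the difficulty into ``count diagram shapes'' plus ``count labelings'' plus ``exponential decay in $k$'' is the right way to think about it. You also correctly identify that the exponential bound $C^k$ on disk-triangulation shapes and a precise reducibility notion are where the combinatorial work lives.

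There is, however, a genuine gap in the probability estimate that your sketch glosses over: you write that each labeled diagram of area $k$ appears as a subcomplex of $\cY$ with probability $p^k$, but this is only true when the $k$ two-cells of the diagram map to $k$ \emph{distinct} triangles of $\cY$. Even a reduced (and even a minimal-area) Van Kampen diagram may send two non-adjacent 2-cells to the same triangle; in that case the probability is $p^{k'}$ for $k'<k$ and your bound is no longer an upper bound. Heuristically these ``folded'' contributions are lower-order (each coincidence costs $\approx n^{-3}$ in labelings while gaining only a factor $p^{-1}=n^{1/2+\eta}$), but the sketch as written is not an upper bound on the first moment until this is sorted out --- one either restricts to triangle-injective diagrams and argues this suffices, or stratifies by the number $f$ of distinct image triangles and controls both the probability $p^{f}$ and the reduced entropy of labelings simultaneously. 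BHK avoid this bookkeeping by a different route: they prove a sparsity statement for small subcomplexes (roughly, a.a.s.\ every subcomplex on a bounded number of vertices $v$ spans fewer than $cv$ triangles for appropriate $c<2$), and combine it with an Euler-characteristic argument about the \emph{image} of a reduced diagram, invoking Gromov's \emph{local-to-global} hyperbolicity criterion so that one only needs to control loops and diagrams of bounded size. Your global first-moment sum does appear to converge for $A$ large once $p=O(n^{-1/2-\eta})$, so the local-to-global device may not be strictly necessary for you, but you should be aware it is what lets BHK sidestep the repeated-triangle issue and keep the union bound finite. A second, smaller caveat: when you invoke ``Gromov's characterization of hyperbolic groups by a linear isoperimetric inequality,'' state precisely that you are applying it to the presentation of $\pi_1(\cY,\ast)$ coming from the (non-simply-connected) complex $\cY$ via a spanning tree, so that a linear Dehn function for null-homotopic loops in $\cY$ does give hyperbolicity of the group.
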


\begin{rem}
    We do not need the definition of a hyperbolic group in this paper, rather just the above fact. For the definition of a hyperbolic group, see for example \cite{kapovich2000equivalence}.
\end{rem}

\begin{fact} [Theorem 1.1 in \cite{kahle2021spectral}]
    Let $\lambda>0$. If $p=\omega\left(\frac{\log n}{n}\right)$, then a.a.s.\ $\cY\sim Y(n,p)$ is a $\lambda$-local spectral expander.
\end{fact}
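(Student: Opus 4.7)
The plan is to reduce the statement about local expansion of $\cY$ to a concentration statement for the spectrum of Erdős--Rényi graphs, and then conclude by a union bound over the $n$ vertex links. As observed in Clause 3 of Corollary \ref{cor:basic_prop}, for every $v\in [n]$ the link $\cY_v$ of a vertex in $\cY\sim Y(n,p)$ is distributed as $G(n-1,p)$. Moreover, taking $\mu_2$ to be uniform on $\cY(2)$ makes the descendant measure $\mu_{v,1}$ on $\cY_v(1)$ uniform on its edges, and $\mu_{v,0}$ proportional to degrees, so the operator $A$ of Definition \ref{defn:spectral_expansion} is exactly the standard random-walk transition operator on the graph $\cY_v$. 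Hence it suffices to establish: for every $\lambda>0$ there exists $C_\lambda$ such that if $np\ge C_\lambda\log n$ then $G\sim G(n,p)$ is a $\lambda$-spectral expander with probability $1-n^{-3}$; given this, a union bound over the $n$ links of $\cY$ finishes the proof, since the failure probability is at most $n\cdot n^{-3}=o(1)$.

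To prove the auxiliary $G(n,p)$ statement, I would write the adjacency matrix $A_G=p(J-I)+E$, where $J$ is the all-ones matrix and $E=A_G-p(J-I)$ is a centered symmetric random matrix with entries bounded by $1$. By a Füredi--Komlós / matrix Bernstein concentration inequality for such matrices, one has
\[
\Pr\bigl[\|E\|\ge t\sqrt{np}\bigr]\le n\cdot e^{-c t^2 \min(np,\log n)}
\]
for a universal $c>0$; taking $t$ a large enough constant and using $np=\omega(\log n)$ turns this into a $1-n^{-\omega(1)}$ tail bound on $\|E\|=O(\sqrt{np})$. In parallel, the Chernoff bound (Lemma \ref{lem:Chernoff_bound}) applied to each of the $n$ vertex degrees, together with a union bound, gives that every vertex has degree $(1+o(1))np$ with probability $1-n^{-\omega(1)}$.

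On these high probability events, the random-walk operator $D^{-1}A_G$ is a $(1+o(1))$-multiplicative perturbation of $(np)^{-1}A_G=(np)^{-1}p(J-I)+(np)^{-1}E$. The first summand is a rank-one operator whose top eigenvalue is the constant function with eigenvalue $1-O(1/n)$, and on its orthogonal complement the norm of $(np)^{-1}A_G$ is bounded by $(np)^{-1}\|E\|+O(1/n)=O(1/\sqrt{np})=o(1)$. Taking $C_\lambda$ large enough ensures the second eigenvalue is at most $\lambda$, which is what we need.

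The main obstacle is obtaining the spectral concentration for $G(n,p)$ with a tail strong enough to survive the union bound uniformly in the regime $p=\omega(\log n/n)$: standard Füredi--Komlós arguments that only bound the expected operator norm, or give constant-power polynomial tails, are not quite sufficient, and one has to invoke either matrix Bernstein or the more refined Feige--Ofek / Vu analyses that track the threshold $np\asymp \log n$ carefully. Once one has $\|A_G-p(J-I)\|=O(\sqrt{np})$ with probability $1-n^{-\omega(1)}$, the remaining degree-concentration and perturbation steps are routine.
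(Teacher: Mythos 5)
The paper does not prove this statement; it imports it verbatim as Theorem~1.1 of \cite{kahle2021spectral}, so there is no internal argument to compare yours against. That said, your sketch is a reasonable reconstruction of how such a result is proved, and the overall plan --- identify each vertex link with a $G(n-1,p)$ graph, observe that the operator $A$ from Definition~\ref{defn:spectral_expansion} is the lazy random-walk operator $D^{-1}A_G$ on that link, prove spectral concentration for a single $G(n,p)$ instance with a failure probability small enough to survive a union bound over the $n$ links --- is sound. You also correctly identify why the $\mu_{v,1},\mu_{v,0}$ chosen here make $A$ the usual random-walk operator.

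One step is garbled, though the garbling is repairable. The displayed inequality $\Pr[\|E\|\ge t\sqrt{np}]\le n\cdot e^{-c t^2\min(np,\log n)}$ is not a statement of matrix Bernstein: the exponent there should read, after substituting $t\sqrt{np}$ for the deviation, roughly $c\cdot\min(t^2,\,t\sqrt{np})$, which for constant $t$ yields only a failure probability of order $n\cdot e^{-ct^2}$ --- nowhere near small enough for a union bound over links. What matrix Bernstein actually gives you cheaply is $\|E\|=O(\sqrt{np\log n})$ with failure probability $n^{-K}$ for any fixed $K$, by choosing the deviation $t\sqrt{np}\asymp\sqrt{K\,np\log n}$. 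That weaker bound is still enough here: after normalizing by the degrees $\approx np$ you get a second eigenvalue of $O(\sqrt{\log n/np})$, which is $o(1)$ precisely because the hypothesis reads $np=\omega(\log n)$. So you do not actually need the sharper Feige--Ofek/Vu estimate $\|E\|=O(\sqrt{np})$ that you invoke at the end; the naive Bernstein route closes the argument on its own, and your remark that the ``standard'' bounds ``are not quite sufficient'' is overly pessimistic. The remaining degree-concentration and rank-one perturbation steps you describe are fine.
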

\begin{cor}\label{cor:prop_of_pair}
    Let $0<\eta<\nicefrac{1}{2}$ and $\lambda>0$. If  $p=n^{-1+\eta}$, then a.a.s.\ over sampled pairs $\cY,\cZ$ according to Algorithm \ref{alg:sample_pair},
    \begin{enumerate}
        \item \underline{Added triangle is non-trivial}: The  triangle $\Delta\in \cZ(2)\setminus \cY(2)$ is not trivial in $\pi_1(\cY,*)$.
        \item \underline{Hyperbolic}: The fundamental groups $\pi_1(\cY,*)$ and $\pi_1(\cZ,*)$ are hyperbolic.
        \item \underline{Local expanders}: The complexes $\cY$ and $\cZ$ are $\lambda$-local spectral expanders.
    \end{enumerate}
\end{cor}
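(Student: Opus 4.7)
The strategy is to couple our sampling procedure with an ordinary Erd{\H{o}}s--R{\'e}nyi complex $Y(n, p')$ where $p' := p + \binom{n}{3}^{-1}$. Since $\eta > 0$ is fixed we have $p' = (1+o(1))p$, so both $p$ and $p'$ lie in the range $O(n^{-\nicefrac{1}{2}-\eta''})$ with $\eta'' > 0$ required by the first two recalled facts, and in the range $\omega(\log n / n)$ required by the third. Draw $U_\tau \sim \mathrm{Unif}(0,1)$ independently for each potential triangle $\tau \subseteq [n]$ and set $\cY := \{\tau : U_\tau \leq p\}$, $\cY' := \{\tau : U_\tau \leq p'\}$; so $\cY \sim Y(n, p)$, $\cY' \sim Y(n, p')$, and $\cY \subseteq \cY'$. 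Conditionally on $\cY$, each triangle outside $\cY(2)$ enters $\cY' \setminus \cY$ independently with probability $(p' - p)/(1 - p) = (1+o(1))/\binom{n}{3}$, so $|\cY' \setminus \cY|$ converges to $\mathrm{Poisson}(1)$ and the event $B := \{|\cY' \setminus \cY| = 1\}$ has probability bounded away from $0$. By symmetry, given $\cY$ and $B$, the unique extra triangle is uniformly distributed over $\{\tau : \tau \notin \cY(2)\}$, exactly matching the distribution of $\Delta$ in Algorithm \ref{alg:sample_pair}. Concentration of $|\cY(2)|$ via Corollary \ref{cor:basic_prop} implies that $\Pr[B \mid \cY]$ equals $(1+o(1))/e$ with probability $1-o(1)$, so the total variation distance between the law of $(\cY, \cY')$ given $B$ and the law of $(\cY, \cZ)$ from Algorithm \ref{alg:sample_pair} is $o(1)$.

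Items (2) and (3) are then immediate from the coupling. By the recalled fact from \cite{babson2011fundamental}, applied separately to $Y(n, p)$ and $Y(n, p')$, both $\pi_1(\cY, *)$ and $\pi_1(\cY', *)$ are hyperbolic with probability $1 - o(1)$. Similarly, by the recalled fact from \cite{kahle2021spectral}, both $\cY$ and $\cY'$ are $\lambda$-local spectral expanders with probability $1 - o(1)$. Since $\Pr[B] = \Theta(1)$, conditioning on $B$ preserves these properties up to an $o(1)$ inflation of failure probability, and the total variation bound from the first paragraph transports the conclusion from $(\cY, \cY')$ to $(\cY, \cZ)$.

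For item (1), let $\epsilon_n$ be the probability over $\cY \sim Y(n, p)$ that a fixed triple $xyz$ gives a loop trivial in $\pi_1(\cY, x)$; this is $o(1)$ by the first recalled fact (Lemmas 3.10 and 3.12 in \cite{babson2011fundamental}). By symmetry, the expected number of trivial triples in $[n]$ equals $\binom{n}{3} \epsilon_n$, so Markov's inequality yields that a.a.s.\ at most a $\sqrt{\epsilon_n}$ fraction of all triples are trivial. Combined with $|\cY(2)| = (1+o(1)) p \binom{n}{3} = o(\binom{n}{3})$ from Corollary \ref{cor:basic_prop}, a $1 - o(1)$ fraction of triples outside $\cY(2)$ are non-trivial in $\pi_1(\cY, *)$; since $\Delta$ is uniform on such triples, it is a.a.s.\ non-trivial.

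The main technical point is the coupling in the first paragraph. Adding a single $2$-cell imposes one new relation on the fundamental group and can in principle destroy hyperbolicity, so item (2) for $\cZ$ does not follow from (2) for $\cY$ by any form of monotonicity. The Poisson-style coupling with $Y(n, p')$ realizes $\cZ$ as an authentic sample of an Erd{\H{o}}s--R{\'e}nyi-type complex (up to $o(1)$ total variation error), allowing us to invoke the cited facts directly for $\cZ$ rather than trying to transfer them from $\cY$.
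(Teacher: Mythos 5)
Your proof is correct, and it follows the same basic strategy the paper implicitly intends (the corollary is stated immediately after the three Facts without explicit proof, and the relevant total-variation observation surfaces later, in the proof of Theorem~\ref{thm:midrange}). The paper's shortcut for transferring (2) and (3) to $\cZ$ is cleaner than your Poisson coupling: conditioned on the number of triangles, both $\cZ$ and $Y(n,p)$ are uniform over complexes with that many triangles, so $d_{TV}(\cZ,\,Y(n,p))$ equals the total variation distance between $M+1$ and $M'$, where $M,M'\sim\mathrm{Bin}(\binom n3,p)$, which is $o(1)$ since the binomial's standard deviation diverges. Your coupling with $Y(n,p')$ reaches the same conclusion but requires the extra step of verifying that conditioning on $B$ barely tilts the marginal of $\cY$, which you do correctly but at the cost of a longer argument. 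Where your write-up adds genuine value is in two places the paper leaves implicit: the observation that hyperbolicity is not monotone (so the paper's remark that $\cZ\subseteq Y(n,2p)$ cannot be used directly for item (2)), and the Markov-inequality step for item (1), which is needed to pass from the per-triple statement in Fact (Lemmas 3.10 and 3.12 of \cite{babson2011fundamental}) to the statement about a uniformly random non-triangle $\Delta$. One very small point in that step: your bound ``at most a $\sqrt{\epsilon_n}$ fraction of all triples are trivial'' silently includes the triples in $\cY(2)$; subtracting $|\cY(2)|=(1+o(1))p\binom n3$ from both numerator and denominator when restricting to triples outside $\cY(2)$ still gives an $o(1)$ fraction, so the conclusion stands, but it is worth stating explicitly since in principle $\sqrt{\epsilon_n}$ could be smaller than $p$.
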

\subsection{Proving Theorem \ref{thm:midrange}}
Let $\eta>0$, $p=n^{-1+\eta}$. Assume by contradiction that every hyperbolic group is sofic and that a.a.s.\ $h_1(\cY,\Sym)=\omega\left(\frac{1}{pn^3}\right)$. As we assumed that the lower bound on $h_1(\cY,\Sym)$ holds a.a.s., we can deduce that also $h_1(\cZ,\Sym)=\omega\left(\frac{1}{pn^3}\right)$ a.a.s.\  as well. Indeed, the total variation distance between $\cY$ and $\cZ$ is bounded by the distance between the numbers $M$ and $M+1$ 0of triangles in $\cY$ and $\cZ$ respectively, where $M$ is distributed binomially with $\binom n3$ trials and success probability $p$.
Whenever $h_1(\cY,\Sym)>0$, the complex $\cY$ is $\rho$-cocycle stable in the $1^{\rm st}$ dimension with  permutations coefficients, where $\rho(\eps)=\frac{\eps}{h_1(\cY,\Sym)}$. The same is true for $\cZ$. Thus, by Fact \ref{fact:cocycle_stable_implies_hom_stable}, $\pi_1(\cY,*),\pi_1(\cZ,*)$ are $\rho'$-homomorphism stable, where $\rho'(\eps)=n^2\rho(\eps)$. By clause $(2)$ of Corollary \ref{cor:prop_of_pair}, the fundamental groups  $\pi_1(\cY,*),\pi_1(\cZ,*)$ are  hyperbolic, and therefore by our assumption, sofic. By Proposition \ref{prop:sof+stable=res_fin}, since the fundamental groups $\pi_1(\cY,*),\pi_1(\cZ,*)$ are sofic and $\rho'$-homomorphism stable, they are residually finite. 

Let $\Delta=abc$ be the triangle added to $\cZ$ in Algorithm \ref{alg:sample_pair}. By clause $(1)$ of Corollary \ref{cor:prop_of_pair}, a.a.s.\ $\Delta$ is non-trivial in $\pi_1(\cY,*)$. Therefore, by residual finiteness of $\pi_1(\cY,*)$, there is a homomorphism $\alpha\colon \pi_1(\cY,*)\to \Sym(n)$ such that $d_h(\alpha(\Delta),\Id)=1$. Recall that by choosing a spanning tree $T$ in $G(\cY)=G(\cZ)$  we get a presentation of $\pi_1(\cY,*)$ whose generators are the edges outside $T$. Thus, $\alpha$ induces a $1$-cocycle on $\cY$ by letting $\alpha(e)=\Id$ if $e\in T$ and keeping it the same on the rest of the edges.

Since $G(\cY)=G(\cZ)$, their $1$-cochains are the same. Hence, $\alpha$ can be thought of as a $1$-cochain of $\cZ$ as well. The local defect of $\alpha$ as a $1$-cochain of $\cZ$ is
\[
\Vert \delta \alpha \Vert =\Ex_{[\Delta']\in \cZ(2)}[\|\delta\alpha(\Delta')\|].
\]
Since $\alpha$ was a $1$-cocycle of $\cY$, $\|\delta\alpha(\Delta')\|=0$ as long as $\Delta'\neq \Delta$, and $\|\delta\alpha(\Delta)\|=1$. Hence, $$\Vert \delta \alpha \Vert =\frac{1}{|\cZ(2)|}.$$

Let $\beta \colon \overrightarrow\cZ(1)\to \Sym(N)$ be a $1$-cocycle of $\cZ$. Then, it is a $1$-cocycle of $\cY$ which satisfies  $\beta(\Delta)=\Id$.\footnote{Note that $\delta\beta$ is only defined on triangles of $\cY$, and $\Delta$ is not a triangle of $\cY$. Regardless, as in Remark \ref{rem:maps_on_paths}, $\beta$ can be evaluated on any closed path, specifically the perimeter of the triangle $\Delta$.} By Lemma \ref{lem:hamming_dist_cochains_and_epxansion_in_covering}, $d_h(\alpha,\beta)\geq \frac{\Vert \delta f\Vert}{2\mu_0(D)}$, where $D=\{(y,i,i)\mid y\in \cY(0), i\in [n]\}$ and $f\colon \cY(0)\times [n]\times [N]\to \FF_2$ is the charateristic function of $D$, and the undelying graph is  $\cA$ the covering of $\cY$ associated with $\alpha\times \beta$. Let $\{C\mid C\in \pi_0(\cA)\}$ be the connected components of $\cA$. Denote by $f_C$ the restriction of $f$ to $C$, namely the characteristic function of $C\cap D$.

\begin{claim}\label{claim:D_is_sparse_in_connected_components}
  For every connected component $C$ of $\cA$, 
\[
\frac{|\mu_0(C\cap D)|}{|\mu_0(C)|}\leq \frac{1}{2}.
\]  
\end{claim}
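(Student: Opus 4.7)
The plan is to exhibit, for each vertex $y\in \cY(0)$, a bijection $\phi_y$ of the fiber $\{y\}\times [n]\times [N]$ of $\cA$ over $y$ satisfying (i) $\phi_y$ sends every point of $D$ in this fiber to a point outside $D$, and (ii) $\phi_y$ preserves the partition of $\cA(0)$ into connected components. Once such $\phi_y$ exist, for every connected component $C$ of $\cA$ the restriction of $\phi_y$ to $C\cap (\{y\}\times[n]\times[N])$ is a self-bijection of this set that embeds $C\cap D\cap (\{y\}\times[n]\times[N])$ into its complement inside $C$, so at most half the vertices in each fiber of $C$ lie in $D$. Since $\mu_0$ is uniform on each fiber, summing this over $y\in\cY(0)$ will give $\mu_0(C\cap D)\leq \tfrac{1}{2}\mu_0(C)$, which is precisely the claim.

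The construction of $\phi_y$ will rest on two observations. First, since $\beta\in Z^1(\cZ,\Sym(N))$ and $\Delta\in \cZ(2)$, we have $\beta(\Delta)=\Id$. Second, the hypothesis $d_h(\alpha(\Delta),\Id)=1$ says that $\sigma:=\alpha(\Delta)\in \Sym(n)$ is a derangement. For the base vertex $a$ of $\Delta$, I would set $\phi_a(a,i,j)=(a,\sigma(i),j)$ (or the map with $\sigma^{-1}$, depending on the orientation convention of Section~\ref{sec:covering}). Property (i) is immediate since $\sigma$ has no fixed points. For property (ii), the image of $(a,i,j)$ under $\phi_a$ is exactly the endpoint of the lift to $\cA$ of the closed path $\Delta=a\to b\to c\to a$ starting at $(a,i,j)$ via the cocycle $\alpha\times \beta$, where the $\Sym(N)$-coordinate is preserved precisely because $\beta(\Delta)=\Id$. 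Thus $(a,i,j)$ and $\phi_a(a,i,j)$ are joined by a path in $\cA$ and lie in the same connected component.

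For a general $y\in \cY(0)$, I will pick any path $\pi$ from $y$ to $a$ in $G(\cY)$ and use the conjugate loop $\pi\cdot \Delta\cdot \bar\pi$, which is closed at $y$. Its holonomy under $\alpha\times \beta$ equals $(\alpha(\pi)\sigma\alpha(\pi)^{-1},\Id)$; the first coordinate $\tau_y$ is conjugate to $\sigma$ and therefore still a derangement. The map $\phi_y(y,i,j)=(y,\tau_y(i),j)$ then satisfies (i) and (ii) by the same reasoning. The main thing to keep in mind, which is not so much an obstacle as the crux of the argument, is that $\beta(\Delta)=\Id$ --- which is precisely the reason $\Delta$ was added to pass from $\cY$ to $\cZ$ --- is what kills the $\Sym(N)$-coordinate of the relevant holonomies and allows each $\phi_y$ to act only on the $\Sym(n)$-coordinate, thereby mapping $D$ into its complement. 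Summing the per-fiber bound over $y$ with $\mu_0$ then yields the result.
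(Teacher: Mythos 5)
Your proof is correct and follows essentially the same approach as the paper: use the holonomy of the lifted loop $\pi\Delta\bar\pi$ to produce a component-preserving map on each fiber, with $\beta(\Delta)=\Id$ killing the $\Sym(N)$-coordinate and $d_h(\alpha(\Delta),\Id)=1$ forcing a derangement in the $\Sym(n)$-coordinate, hence moving $D$ off itself. The only cosmetic difference is that you allow an arbitrary path $\pi$ from $y$ to $a$ and observe the conjugate $\alpha(\pi)\alpha(\Delta)\alpha(\pi)^{-1}$ is still a derangement, whereas the paper takes $\pi$ inside the spanning tree so that $\alpha(\pi)=\Id$; both are equally valid.
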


\begin{proof}[Proof of Claim \ref{claim:D_is_sparse_in_connected_components}] Let $\varphi\colon \cA \to \cY$ be the covering map induced by $\alpha\times \beta$, namely $\varphi(y,i,j)=y$ for every $y\in \cY(0)$, $i\in[n]$ and $j\in [N]$. Then,
since $\mu_0$ is uniform on fibers $$\varphi^{-1}(y)=\{(y,i,j)\mid i\in[n],j\in[N]\},$$ it is enough to prove that $\frac{|\varphi^{-1}(y)\cap C\cap D|}{|\varphi^{-1}(y)\cap C|}\leq \frac{1}{2}$ for every $y\in \cY(0)$. Let $(y,i,i)\in \varphi^{-1}(y)\cap C\cap D$. Choose a path $\sigma$ from $y$ to $a$ --- the first vertex of $\Delta=abc$ ---  in the spanning three $T$. By the definition of $\cA$,  $(y,\alpha\times \beta (\sigma \Delta \bar\sigma).(i,i))$  is the origin point of the path $\sigma \Delta \bar\sigma$ whose endpoint is $(y,i,i)$, and hence they are in the same connected component $C$. On the other hand,
\[
\alpha\times \beta (\sigma \Delta \bar\sigma).(i,i)=(\alpha(\sigma\Delta\bar\sigma).i,\beta(\sigma\Delta\bar\sigma).i).
\]
Since $\beta(\Delta)=\Id$, its conjugation $\beta(\sigma\Delta\bar\sigma)$ is also $\Id$. On the other hand, $\sigma$ is contained in $T$ and thus $\alpha(\sigma)=\alpha(\bar\sigma)=\Id$. Hence, $\alpha(\sigma\Delta\sigma)=\alpha(\Delta)$. But $d_h(\alpha(\Delta),\Id)=1$, namely it has no fixed points. All in all,
\[
(\alpha(\sigma\Delta\bar\sigma).i,\beta(\sigma\Delta\bar\sigma).i)=(j,i)
\]
for some $j\neq i$. This finishes the proof, since the map $(y,i,i)\to (y,j,i)\notin D$ we just defined is injective and preserves the connected component (and fiber). 
\end{proof}

Back to the proof of Theorem \ref{thm:midrange}. We have
\[
\|\delta f\|=\Pro_{e\sim \mu_1}[f(\iota(e))\neq f(\tau(e))]=\Ex_{C\in \pi_0(\cA)} \underbrace{\Pro_{e\in C}[f_C(\iota(e))\neq f_C(\tau(e))]}_{\Vert \delta f_C\Vert},
\]
where $\Ex_{c\in \pi_0(\cA)}$ is according to the uniformly lifted $\mu_0$ and $\Pro_{e\in C}$ is according to the uniformly lifted $\mu_1$. By Claim \ref{claim:D_is_sparse_in_connected_components},  $d_h(f_C,Z^0(C,\FF_2))=\nicefrac{\mu_0(C\cap D)}{\mu_0(C)}$, as $Z^0(C,\FF_2)$ consists of the constant functions, and by the aforementioned claim, $f_C$ is closer to the constant zero function than to the all one function. Furthermore, by combining clause $(3)$ of Corollary \ref{cor:prop_of_pair} together with Corollary \ref{cor:local_exp_implies_exp_of_coverings}, we get $\frac{\Vert \delta f_C\Vert}{\nicefrac{\mu_0(C\cap D)}{\mu_0(C)}}\geq h_0(C,\FF_2)\geq \frac{1-2\lambda}{1-\lambda}$.
Therefore,
\[
\Ex_{C\in \pi_0(\cA)} [\Vert \delta f_C\Vert]\geq \sum_{C\in \pi_0(\cA)}\mu_0(C\cap D)\frac{1-2\lambda}{1-\lambda}=\mu_0(D)\frac{1-2\lambda}{1-\lambda}.
\]
All in all, 
\[
d_h(\alpha,\beta)\geq \frac{\Vert \delta f\Vert}{2\mu_0(D)}\geq \frac{1-2\lambda}{2-2\lambda}.
\]
But this means that the global defect of $\alpha$ as a $1$-cochain of $\cZ$ is at least $\frac{1-2\lambda}{2-2\lambda}$, which we can choose to be larger than $\frac{1}{3}$. Hence $h_1(\cZ,\Sym)\leq \frac{3}{|\cZ(2)|}$. But, by Corollary \ref{cor:basic_prop}, a.a.s.\ $|\cZ(2)|\geq \frac{p\binom{n}{3}}{2}$. Therefore $h_1(\cZ,\Sym)=O\left(\frac{1}{pn^3}\right)$, which contradicts our assumption that $h_1(\cZ,\Sym)=\omega\left(\frac{1}{pn^3}\right)$.
\qed

\section{\textbf{The trivial fundamental group regime}}\label{sec:high_range}

Before moving to the main proposition, we need the following fact. 

\begin{fact}[Lemma 3.1 in \cite{CL_part2}]\label{lem:identity_on_spanning_tree}
    Let $\cX$ be a simplicial complex, let $T$ be a spanning tree in $G(\cX)$ and let $*\in \cX(0)$. Then, for every $1$-cochain $\alpha\colon \overrightarrow\cX(1)\to \Sym(n)$, there is a $0$-cochain $\beta\colon \cX(0)\to \Sym(n)$ such that $\beta(*)=\Id$ and $\beta.\alpha(e)=\Id$ for every $e\in T$.
    \end{fact}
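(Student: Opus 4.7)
The plan is to construct $\beta$ inductively along the spanning tree $T$, using the fact that $T$ is acyclic so that each vertex has a unique path to the basepoint $*$.

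First I would set $\beta(*) = \Id$. For any other vertex $v\in \cX(0)$, since $T$ is a spanning tree, there is a unique path
\[
\pi_v\ \colon\ *=v_0\xrightarrow{e_1}v_1\xrightarrow{e_2}\cdots \xrightarrow{e_k}v_k=v
\]
in $T$ from $*$ to $v$. Recalling the extension of $\alpha$ to paths from Remark \ref{rem:maps_on_paths}, namely $\alpha(\pi_v)=\alpha(e_1)\cdots \alpha(e_k)$, I would define
\[
\beta(v) = \alpha(\pi_v)^{-1}.
\]
Note that $\beta(*)=\alpha(\pi_*)^{-1}=\Id$ since $\pi_*$ is the empty path.

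The verification that $\beta.\alpha(e) = \Id$ for every $e\in T$ is then immediate. Indeed, for an oriented edge $e = x\xrightarrow{e}y$ of $T$, since $T$ is a tree, exactly one of $x,y$ is the parent of the other in the rooted tree $(T,*)$; say without loss of generality $x$ is the parent of $y$, so that $\pi_y$ is the concatenation of $\pi_x$ with $e$. Then
\[
\beta(y) = \alpha(\pi_y)^{-1} = (\alpha(\pi_x)\alpha(e))^{-1} = \alpha(e)^{-1}\alpha(\pi_x)^{-1} = \alpha(e)^{-1}\beta(x),
\]
and therefore, by the definition of the action in \eqref{eq:action_0-coch_on_1-coch},
\[
\beta.\alpha(e) = \beta(x)^{-1}\alpha(e)\beta(y) = \beta(x)^{-1}\alpha(e)\alpha(e)^{-1}\beta(x) = \Id.
\]
The symmetric case where $y$ is the parent of $x$ is handled identically, using the antisymmetry $\alpha(\bar e)=\alpha(e)^{-1}$ built into $C^1(\cX,\Sym(n))$.

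There is no real obstacle here; the only point to be careful about is that $\beta$ is well-defined, which is precisely why the tree hypothesis on $T$ is necessary (a unique path from $*$ to $v$ must exist, otherwise two different paths could impose inconsistent values on $\beta(v)$). The whole argument is essentially the classical observation that trivializing a $1$-cochain along a spanning tree amounts to a gauge transformation, and it fits in a few lines.
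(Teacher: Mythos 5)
Your proof is correct, and this is the standard gauge-fixing argument along a spanning tree; the paper itself does not reprove the fact but cites it from [CL\_part2], where the proof is essentially the same construction. The only point worth being slightly more explicit about is that the uniqueness of the tree path $\pi_v$ is exactly what makes $\beta$ well-defined, which you do note at the end.
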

    
\begin{prop}\label{prop:positive_Cheeger_high_p}
    Let $\eta>0$, $p=n^{-\nicefrac{1}{2}+\eta}$ and $\cY\sim Y(n,p)$. Then asymptotically almost surely, $$h_1(\cY,\Sym)\geq \frac{\eta}{42}.$$
\end{prop}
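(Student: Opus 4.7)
The plan is to show that for any $1$-cochain $\alpha \in C^1(\cY, \Sym)$ there exists a coboundary $\delta\gamma_* \in B^1(\cY, \Sym) \subseteq Z^1(\cY, \Sym)$, obtained from a well-chosen basepoint $*\in[n]$, with $d(\alpha, \delta\gamma_*) \leq (2\ell+1)(1+o(1))\Vert\delta\alpha\Vert$ for $\ell = \lceil \nicefrac{1}{4\eta}\rceil$. Since $2\ell+1\leq\nicefrac{1}{2\eta}+3\leq \nicefrac{2}{\eta}$ for $\eta\leq\nicefrac{1}{2}$, this yields $h_1(\cY,\Sym)\geq \eta/(2+o(1))$, which exceeds $\eta/42$ for large $n$. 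I assume $\cY$ satisfies the a.a.s.\ conclusions of Section \ref{sec:prop_of_LM_complexes}: by Claims \ref{claim:number_of_BHK_fillings} and \ref{claim:upper_bound_fillings_abc}, every triangular path $xyz$ in $K_n$ has at least $N_1=(3-o(1))n^{(2\ell+1)\eta-\nicefrac{1}{2}}$ good $\ell$-BHK fillings while each triangle in $\cY(2)$ participates in at most $N_2=(6\ell+3+o(1))n^{2\eta\ell}$ such fillings across all triangular paths; and by Corollary \ref{cor:basic_prop}, the measure $\mu_1$ is $(1\pm o(1))$-close to uniform on $\binom{[n]}{2}$ and $|\cY(2)|=(1\pm o(1))p\binom{n}{3}$.

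The first step uses BHK fillings to bound the \emph{triangular defect} $d_\alpha(xyz):=\alpha(xy)\alpha(yz)\alpha(zx)$ on every triple, not only on triangles of $\cY$. Each good $\ell$-BHK filling of $xyz$ is a Van Kampen diagram with $2\ell+1$ boundary triangles $\Delta_i\in\cY(2)$, so $d_\alpha(xyz)=\prod_i\sigma_i\delta\alpha(\Delta_i)\sigma_i^{-1}$ for suitable paths $\sigma_i$, giving $\Vert d_\alpha(xyz)\Vert\leq \sum_i\Vert\delta\alpha(\Delta_i)\Vert$ by bi-invariance of the Hamming norm. Averaging over the $N_1$ good fillings of $xyz$, summing over all $\binom{n}{3}$ triangular paths, and swapping the order of summation using the $N_2$ bound yields
\[
\Ex_{\{x,y,z\}\in \binom{[n]}{3}}\bigl[\Vert d_\alpha(xyz)\Vert\bigr]\leq \frac{N_2\cdot|\cY(2)|}{N_1\binom{n}{3}}\Vert\delta\alpha\Vert=\frac{N_2\cdot p}{N_1}(1+o(1))\Vert\delta\alpha\Vert=(2\ell+1)(1+o(1))\Vert\delta\alpha\Vert.
\]
The choice $\ell=\lceil\nicefrac{1}{4\eta}\rceil$ is exactly what makes the ratio $N_2\cdot p/N_1=(6\ell+3)/3=2\ell+1$ a pure constant in $n$.

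The second step is the basepoint construction. For each $*\in[n]$, let $\gamma_*\colon \cY(0)\to\Sym(n)$ be defined by $\gamma_*(*):=\Id$ and $\gamma_*(x):=\alpha(*x)$ for $x\neq *$, so $\delta\gamma_*\in B^1(\cY,\Sym)\subseteq Z^1(\cY,\Sym)$. Bi-invariance of $d_h$ yields $d_h(\alpha(xy),\delta\gamma_*(xy))=\Vert \alpha(yx)\alpha(x*)\alpha(*y)\Vert$, which is precisely $\Vert d_\alpha(yx*)\Vert$ and depends only on the unordered triple $\{x,y,*\}$ by cyclic/reflective invariance of $\Vert d_\alpha(\cdot)\Vert$. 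Averaging over a uniformly random $*\in[n]$ and using that $\mu_1$ is approximately uniform gives
\[
\Ex_*[d(\alpha,\delta\gamma_*)]=(1+o(1))\Ex_{\{x,y,z\}\in\binom{[n]}{3}}\bigl[\Vert d_\alpha(xyz)\Vert\bigr]\leq (2\ell+1)(1+o(1))\Vert\delta\alpha\Vert,
\]
and pigeonhole extracts a basepoint $*$ with $d(\alpha,Z^1(\cY,\Sym))\leq d(\alpha,\delta\gamma_*)\leq (2\ell+1)(1+o(1))\Vert\delta\alpha\Vert$.

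As promised in the introduction, the argument is quite immediate once the BHK filling counts from Section \ref{sec:prop_of_LM_complexes} are available. There is no substantial obstacle: the decisive point is simply that $\ell=\lceil\nicefrac{1}{4\eta}\rceil$ is chosen precisely to make the exponents balance in the first averaging so that $N_2\cdot p/N_1$ becomes a pure constant; everything else is standard summation interchange, pigeonhole, and bookkeeping of $(1+o(1))$ factors coming from concentration of the BHK counts and the mild non-uniformity of $\mu_1$.
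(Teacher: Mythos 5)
Your proof is correct and follows the same basic strategy as the paper's: fix a basepoint $*$, trivialize the star at $*$ to produce an explicit $1$-coboundary (your $\delta\gamma_*$ is exactly the paper's $\beta_x.\alpha$ from Fact~\ref{lem:identity_on_spanning_tree} applied with the star at $x$ as spanning tree), average over $*$ to reduce to bounding $\Ex_{\{x,y,z\}}\bigl[\|\alpha(xyz)\|\bigr]$, and then control this by averaging over BHK fillings and interchanging sums via Claims~\ref{claim:number_of_BHK_fillings} and~\ref{claim:upper_bound_fillings_abc}.

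The one real difference is quantitative bookkeeping: you track the constants more carefully (working with unordered triples throughout and cancelling $\mu_1(\{y,z\})\approx 1/\binom n2$, $|\cY(2)|\approx p\binom n3$ cleanly) and arrive at $d(\alpha,Z^1(\cY,\Sym))\le (2\ell+1)(1+o(1))\|\delta\alpha\|$, whereas the paper's accounting loses some constant factors and ends with $(24\ell+12+o(1))\|\delta\alpha\|$. Both imply the claimed $h_1\ge\eta/42$, and your version actually proves the sharper $h_1\ge \eta/(2+o(1))$, so this is a genuine improvement in the constant with no change in method.

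One small imprecision in your exposition: you say the choice $\ell=\lceil 1/(4\eta)\rceil$ is ``exactly what makes the ratio $N_2p/N_1$ a pure constant.'' In fact the exponents of $n$ in $N_2\cdot p/N_1$ cancel identically for \emph{every} $\ell$ (the exponent is $2\eta\ell+(\eta-\tfrac12)-((2\ell+1)\eta-\tfrac12)=0$); the ratio equals $2\ell+1$ no matter what $\ell$ you take. The actual role of $\ell=\lceil 1/(4\eta)\rceil$ is to make the exponent $(2\ell+1)\eta-\tfrac12$ in Claim~\ref{claim:number_of_BHK_fillings} positive, so that the number of good fillings per triple diverges and the concentration arguments (Claim~\ref{claim:expansion_Gnp}, which also needs $\ell<1/(2\eta)$ for the links) apply. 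This is a comment about motivation only and does not affect the correctness of your argument.
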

\begin{rem}
    Our proof is in the spirit of  what is known in the literature as the ``cones method'' (cf.\ \cites{gromov2010singularities,lubotzky2016expansion,kozlov2019quantitative,kaufman2019coboundary,dikstein2023coboundary}).
\end{rem}
\begin{proof}

Let $\alpha\colon \overrightarrow\cY(1)\to \Sym (N)$ be a $1$-cochain of $\cY\sim Y(n,p)$. By Fact \ref{lem:identity_on_spanning_tree}, for every $x\in [n]$ we can choose a $0$-cochain $\beta_x$ such that 
\[
\forall y\in [n]\ \colon\ \ \beta_x.\alpha(xy)=\Id. 
\]
Also, by using the characterization in \eqref{eq:char_of_dist_to_coboundaries} and the fact that $B^1(\cY,\Sym)=Z^1(\cY,\Sym)$ when $\pi_1(\cY,*)$ is trivial, we have $$d_h(\alpha,Z^1(\cY,\Sym))\leq \min_x\Vert\beta_x.\alpha\Vert \leq \Ex_x[\Vert\beta_x.\alpha\Vert ],$$ where $\Ex_x$ is the uniform distribution.  
Now, using the notation defined in Remark \ref{rem:maps_on_paths},
\[
\begin{split}
    \Ex_x[\Vert\beta_x.\alpha\Vert ]&=\Ex_{x}\Ex_{yz\sim \mu_1}[d_h(\beta_x.\alpha(yz),\Id)]\\
    &= \Ex_{x}\Ex_{yz\sim \mu_1}[d_h(\underbrace{\alpha(xy)\alpha(yz)\alpha(zx)}_{=\alpha(xyz)},\Id)]\\
    &=\frac{1}{n}\sum_{x\in[n]}\sum_{y\neq z\in[n]}\mu_1(yz)d_h(\alpha(xyz),\Id)\\
    &=\frac{1}{n}\sum_{x\neq y\neq z\in[n]}\mu_1(yz)d_h(\alpha(xyz),\Id),
\end{split}
\]
where the last equality is since $x=y$ or $x=z$ implies that $\alpha(xy)\alpha(yz)\alpha(zx)=\Id$.
Note that by the triangle inequality and the bi-invariance of the Hamming metric, if $xyz$ is filled (Definition \ref{defn:filling}) by the triangles $\Delta_1,...,\Delta_k$, then
\[
\begin{split}
 d_h(\alpha(xyz),\Id)&\leq \sum_{i=1}^k d_h(\alpha(\Delta_i),\Id).
\end{split}
\]
Therefore, $d_h(\alpha(xyz),\Id)\leq \displaystyle{\min_{\Delta_1,...,\Delta_k\ \textrm{fills}\ xyz}}\sum_{i=1}^k d_h(\alpha(\Delta_i),\Id)\leq \Ex \left[ \sum_{i=1}^k d_h(\alpha(\Delta_i),\Id)\right]$, where $\Ex$ is any distribution over fillings of $xyz$. In particular, choosing $\ell=\lceil \nicefrac{1}{4\eta} \rceil$, we have
\[
\begin{split}
    &d_h(\alpha,Z^1(\cY,\Sym))\leq\\
    &\frac{1}{n}\sum_{x\neq y\neq z\in [n]}\mu_1(yz)\Ex_{\Delta_1,...,\Delta_{2\ell+1}\ \textrm{good}\ \ell-\textrm{BHK\ filling\ of}\ xyz}\left[ \sum_{i=1}^{2\ell+1} d_h(\alpha(\Delta_i),\Id)\right],
\end{split}
\]
where the distribution over such good $\ell$-BHK fillings is the uniform one. By Claim \ref{claim:number_of_BHK_fillings}, there are at least $(3+o(1))n^{(2\ell+1)\eta-\nicefrac{1}{2}}$ many good $\ell$-BHK fillings of $xyz$, and thus 
\[
\begin{split}
&\Ex_{\Delta_1,...,\Delta_{2\ell+1}\ \textrm{good}\ \ell-\textrm{BHK\ filling\ of}\ xyz}\left[ \sum_{i=1}^{2\ell+1} d_h(\alpha(\Delta_i),\Id)\right]\leq \\
&\frac{1}{(3+o(1))n^{(2\ell+1)\eta-\nicefrac{1} {2}}}\sum_{\Delta_1,...,\Delta_{2\ell+1}\ \textrm{good}\ \ell-\textrm{BHK\ filling\ of}\ xyz}\left[ \sum_{i=1}^{2\ell+1} d_h(\alpha(\Delta_i),\Id)\right].
\end{split}
\]
By Corollary \ref{cor:basic_prop}, a.a.s.\ $\mu_1(yz)\leq {(2+o(1)})n^{-2}$ for all $y\neq z\in [n]$, and hence
\[
\begin{split}
    &d_h(\alpha,Z^1(\cY,\Sym))\leq\\
    &\frac{2}{(3+o(1))n^{(2\ell+1)\eta-\nicefrac{1} {2}}\cdot n^3}\sum_{x\neq y\neq z\in [n]}\sum_{\Delta_1,...,\Delta_{2\ell+1}\ \textrm{good}\ \ell-\textrm{BHK\ filling\ of}\ xyz}\left[ \sum_{i=1}^{2\ell+1} d_h(\alpha(\Delta_i),\Id)\right].
\end{split}
\]
By Claim \ref{claim:upper_bound_fillings_abc}, a specific $abc\in \cY(2)$ appears at most  $(6\ell+3+o(1))n^{2\eta\ell}$ many times in the above sum, and hence
\[
\begin{split}
d_h(\alpha,Z^1(\cY,\Sym))&\leq\frac{(12\ell+6+o(1))n^{2\eta\ell}}{(3+o(1))n^{(2\ell+1)\eta-\nicefrac{1} {2}}\cdot n^3}\sum_{abc\in \cY(2)}d_h(\alpha(abc),\Id)\\
&\leq (24\ell+12+o(1))\cdot \frac{\sum_{abc\in \cY(2)}d_h(\alpha(abc),\Id)}{\nicefrac{pn^3}{6}}.
\end{split}
\]
By Corollary \ref{cor:basic_prop}, $|\cY(2)|=(1+o(1))\frac{pn^3}{6}$, and we have 
\[
d_h(\alpha,Z^1(\cY,\Sym))\leq (24\ell+12+o(1))\|\delta\alpha\|.
\]
As $24\ell+12\leq \frac{6}{\eta}+36$, and $36<\frac{36}{\eta}$, we conclude that $h_1(\cY,\Sym)\geq \frac{\eta}{42}$.
\end{proof}

\section{{\textbf{The subcritical regime}}}\label{sec:low_range}

In this section we study $h_1(\cY,\Sym)$ where $\cY\sim Y(n,p)$ and $p=n^{-1-\eta}$ for some fixed $\eta>0$. In this regime, the simplicial complex $Y(n,p)$ has a well-understood {\em subcritical} structure: it is comprised of small strongly-connected components, that are collapsible to graphs ~\cite{farberDCG}.

\subsection{Strongly-connected components, collapsibility, and Cheeger constant}
We start by a general study of collapsible strongly-connected components, and their Cheeger constants. 

Given a $2$-dimensional simplicial complex $X$, its dual graph is a graph whose vertices are the triangles of $X$, and two triangles are adjacent if they share an edge. The complex $X$ is called {\em strongly-connected} if its dual graph is connected. More generally, a subcomplex of $X$ which is comprised of all the triangles in a connected component of the dual graph, and the vertices and edges they contain, is called a strongly-connected component. Clearly, every $2$-complex $X$ has an edge-disjoint decomposition to strongly-connected components and isolated edges and vertices. 

The  Cheeger constant $h_1(X,\Gamma)$ can be read off the Cheeger constants of the strongly-connected components of $X$.

\begin{lem}\label{lem:h1_min}
    Let $X$ be a $2$-complex and $\cC$ the set of its strongly-connected components. Then,
    \[
    h_1(X,\Gamma) = \min_{C\in\cC}h_1(C,\Gamma)\,.
    \]
\end{lem}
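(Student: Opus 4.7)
The plan is to exploit the fact that strongly-connected components induce a natural decomposition of both sides of the Cheeger ratio with matching weights. First I observe the structural fact that every triangle of $X$ lies in a unique strongly-connected component, and since any two triangles sharing an edge are adjacent in the dual graph, every edge contained in some triangle also lies in a unique component. Edges and vertices not contained in any triangle are invisible to the descending measure $\mu_1$. For each $C\in\cC$, equip it with the restricted-normalized measures $\mu_2^C = \mu_2|_{C(2)}/\mu_2(C(2))$ and $\mu_1^C$ descending from it, so that $\mu_1^C(e) = \mu_1(e)/\mu_1(C(1))$ on $C(1)$. The key weight identity is $\mu_1(C(1)) = \mu_2(C(2))$, obtained by summing the proportionality $\mu_1(e) \propto \sum_{\Delta\supset e}\mu_2(\Delta)$ over $e\in C(1)$ and exchanging the order of summation (each triangle $\Delta\in C(2)$ is counted three times, canceling the normalization). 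Denote this common value by $w_C$.

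For any cochain $\alpha \in C^1(X,\Gamma)$, let $\alpha_C$ denote its restriction to the oriented edges of $C$. Partitioning the sum over triangles by component immediately yields $\|\delta\alpha\|_X = \sum_{C\in\cC} w_C \cdot \|\delta\alpha_C\|_C$. Moreover, because the cocycle condition is triangle-local and the triangles partition into components, a $1$-cocycle on $X$ is equivalent to a tuple of $1$-cocycles, one per component (the values on edges not contained in any triangle are arbitrary but invisible to $\mu_1$). The minimization in $d(\alpha, Z^1(X,\Gamma))$ therefore decouples across components, giving $d(\alpha, Z^1(X,\Gamma)) = \sum_{C\in\cC} w_C \cdot d(\alpha_C, Z^1(C,\Gamma))_C$.

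For the upper bound $h_1(X,\Gamma)\leq h_1(C,\Gamma)$ for each fixed $C$, given any $\alpha_C$ on $C$ with $\|\delta\alpha_C\|_C>0$, I extend it to $X$ by setting $\alpha(e) = \Id$ on every edge outside $C(1)$. All other components then carry the constant-identity cochain, which is already a cocycle there, so their contributions to both decompositions vanish; the ratios on $X$ and on $C$ match, and taking the infimum over $\alpha_C$ yields the bound. For the reverse inequality, I apply the mediant inequality: if $a_i, b_i \geq 0$ with $a_i \geq r\, b_i$ for every $i$, then $\sum a_i \geq r \sum b_i$. Components with $\|\delta\alpha_C\|_C = 0$ have $\alpha_C \in Z^1(C,\Gamma)$, so their contribution to both sums is zero and can be discarded; for the remaining components, $\|\delta\alpha_C\|_C \geq h_1(C,\Gamma)\cdot d(\alpha_C, Z^1(C,\Gamma))_C \geq (\min_{C'}h_1(C',\Gamma))\cdot d(\alpha_C, Z^1(C,\Gamma))_C$. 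Summing with weights $w_C$ and dividing yields $\|\delta\alpha\|_X \geq (\min_{C'} h_1(C',\Gamma)) \cdot d(\alpha, Z^1(X,\Gamma))$, and taking the infimum over $\alpha$ closes the argument.

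The main point of care is verifying the weight identity $\mu_1(C(1)) = \mu_2(C(2))$, since it is precisely what synchronizes the two decompositions and lets the mediant step go through cleanly. Beyond that, only minor bookkeeping is required: discarding components where $\alpha_C$ is already a cocycle is harmless because both $\|\delta\alpha_C\|_C$ and $d(\alpha_C, Z^1(C,\Gamma))_C$ vanish simultaneously there.
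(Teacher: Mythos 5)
Your proof is correct and takes essentially the same approach as the paper: decompose the cochain by strongly-connected components, observe that the edge- and triangle-induced weights on components agree (your $w_C$ is precisely the paper's $\mu(C)$), then get the lower bound on $h_1(X,\Gamma)$ from the weighted sum and the upper bound by extending a near-optimal cochain of a minimizing component $C_0$ to $X$ by $\Id$.
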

\begin{proof}
    Let $\mu$ be the measure on $\cC$ given by sampling a triangle from $X$ by the uniform measure $\mu_2$ and selecting its component. Note that since $\mu_1$ is descending from $\mu_2$ then $\mu$ is also obtained by sampling an edge by the measure $\mu_1$ and selecting its component.
    
    For an $i$-cochain $\alpha$ of $X$, we denote by $\alpha_C$ its restriction to a strongly-connected component $C\in\cC$. A direct computation yields that 
    \(
    d(\alpha,\beta)=\Ex_{C\sim\mu}[d(\alpha_C,\beta_C)] 
    \)
     for every two $i$-cochains $\alpha,\beta$. Therefore,
    $\|\delta\alpha\| = \Ex_{C\sim\mu}[\|\delta\alpha_C\|]$ for every $1$-cochain $\alpha$, and, in addition, $d(\alpha,Z^1(X,\Gamma))=\Ex_{C\sim\mu}[d(\alpha_C,Z^1(C,\Gamma))]$, since the closest $1$-cocycle to $\alpha$ is obtained by choosing the closest $1$-cocycle to $\alpha_C$ in every component $C$.

    In conclusion, for every $1$-cochain $\alpha$,
    \[
    \|\delta \alpha\|\ge \min_{C\in\cC}h_1(C,\Gamma)\cdot \Ex_{C\sim\mu}[d(\alpha_C,Z^1(C,\Gamma))]=\min_{C\in\cC}h_1(C,\Gamma)d(\alpha,Z^1(X,\Gamma))]\,,
    \]
    whence $h_1(X,\Gamma)\ge \min_{C\in\cC}h_1(C,\Gamma).$ 
    
    On the other hand, let $C_0$ be a component attaining the minimum $h_1(C,\Gamma)$, using the $1$-cochain $\alpha_0\in C^1(C_0,\Gamma)$. Extend $\alpha_0$ to a $1$-cochain $\alpha$ on $X$ by setting $\alpha(e)=\Id$ for every $e\notin C_0$. Then, 
    \[
    \|\delta\alpha\|=\mu(C_0)\cdot\| \delta\alpha_0\|=\mu(C_0)\cdot h_1(C_0,\Gamma)\cdot d(\alpha_0,Z^1(C_0,\Gamma))=h_1(C_0,\Gamma)\cdot d(\alpha,Z^1(X,\Gamma))\,,
    \]
    whence $h_1(X,\Gamma)\le h_1(C_0,\Gamma),$ which concludes the proof. 
\end{proof}

Collapsibility is one of the most well-studied concepts in the theory of random simplicial complexes. An edge $e$ in a simplicial complex $X$ is called {\em free} if it is contained in a unique triangle $\Delta$ of $X$. In such a case, the removal $X\searrow X\setminus\{e,\Delta\}$ of $e$ and $\Delta$ from $X$ is called an elementary collapse. If all the triangles in $X$ can be removed in a sequence of elementary collapses, we say that $X$ is collapsible to a graph. 

Observe that every strongly-connected $2$-complex $C$ satisfies $|C(2)|+2\ge |C(0)|$. Indeed, explore its dual graph by some graph-search algorithm. Initially, one triangle and $3$ vertices are exposed. Afterwards, in every step the algorithm exposes one additional triangle and at most one new vertex. The following claim discusses the collapsibility of strongly-connected $2$-complexes with a few faces. 

\begin{claim}\label{clm:simple_implies_collapsible}
    Let $C$ be a strongly-connected component satisfying $|C(0)|>|C(2)|$. Then, $C$ is collapsible to a graph $G$ that intersects the boundary of every triangle in $C$.
\end{claim}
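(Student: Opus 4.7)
The plan is to run a BFS on the dual graph of $C$, use the hypothesis $|C(0)|>|C(2)|$ to show that in an appropriately chosen ordering of the triangles most of them attach with a single shared edge, and then collapse in reverse order using the resulting abundance of "new edges" at each step.

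Concretely, I would pick an ordering $\Delta_1,\ldots,\Delta_m$ of the triangles ($m=|C(2)|$) so that each $\Delta_i$ with $i\ge 2$ shares a $1$-edge with $\Delta_1\cup\cdots\cup\Delta_{i-1}$, which exists by the strong-connectedness of $C$. Let $V_i\in\{0,1\}$ denote the number of new vertices introduced at step $i$ (so $V_1=3$), and let $k_i$ count the edges of $\Delta_i$ already present in $\Delta_1\cup\cdots\cup\Delta_{i-1}$. A short case analysis shows $V_i=1\Rightarrow k_i=1$, since the two edges of $\Delta_i$ containing the new vertex cannot lie in any prior triangle, forcing the lone shared edge to be the third one. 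Because $|C(0)|=3+\sum_{i\ge 2}V_i$ and $|C(0)|>m$, at most one index $i^*$ has $V_{i^*}=0$, and hence at most one index has $k_i\ge 2$.

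I would then collapse in reverse order $\Delta_m,\Delta_{m-1},\ldots,\Delta_1$, at each step $i\ge 2$ choosing $e_i\in\Delta_i$ to be any edge not lying in $\Delta_1\cup\cdots\cup\Delta_{i-1}$ (any edge of $\Delta_1$ for $i=1$). Such an $e_i$ exists precisely when $k_i\le 2$. If the original BFS happens to produce $k_{i^*}=3$, I would reorder by inserting $\Delta_{i^*}$ immediately after the earliest-indexed triangle $\Delta_a$ containing one of its edges; since two distinct triangles share at most one $1$-edge, the revised $k_{i^*}$ equals $1$, while every triangle shifted by the relocation had original $k=1$ and is dual-adjacent to $\Delta_{i^*}$ by at most one shared edge, so its new $k$ stays at most $2$. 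The collapse is valid because at the moment of removing $\Delta_i$ the edge $e_i$ lies in no other current triangle ($\Delta_1,\ldots,\Delta_{i-1}$ by the definition of $e_i$, and $\Delta_{i+1},\ldots,\Delta_m$ already collapsed) and was not used earlier in the process ($e_j\notin\Delta_1\cup\cdots\cup\Delta_{j-1}\supseteq\Delta_i$ whenever $j>i$).

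Finally, the resulting graph $G=C(1)\setminus\{e_1,\ldots,e_m\}$ intersects the boundary of every triangle: for each $\Delta_i$ with $k_i\le 1$, the second "new" edge of $\Delta_i$ is never chosen as any $e_j$ and so remains in $G$; for the occasional $\Delta_i$ with $k_i=2$, its two shared edges come from distinct predecessors (single-shared-edge fact), each of which satisfies $k=1$ and thus offers two candidate collapse edges, so one may choose the predecessors' edges to avoid those shared with $\Delta_i$, preserving at least one boundary edge of $\Delta_i$ in $G$. The main obstacle is the bookkeeping that accompanies the reordering when $k_{i^*}=3$: I must verify that the new ordering is still a valid BFS-like traversal and that the several shifted triangles acquiring new $k=2$ can all be serviced by consistent edge choices, which reduces to a short matching-style argument leveraging the sparsity of dual-adjacencies forced by $|C(0)|>|C(2)|$.
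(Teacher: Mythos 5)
Your overall strategy matches the paper's: order the triangles by a dual-graph search, observe that the hypothesis $|C(0)|>|C(2)|$ forces at most one step $i^*$ to introduce no new vertex (hence at most one step with $k_i\ge 2$, since a step introducing a new vertex necessarily has exactly one shared edge), and then collapse in reverse using at each step a ``new'' edge as the free edge, reserving another new edge of each $\Delta_i$ as a witness that $G$ meets $\partial\Delta_i$. This is exactly the paper's construction, including the patch for the one step with $k_{i^*}=2$ (choose the collapse edges of the relevant predecessor(s) so that one old boundary edge of $\Delta_{i^*}$ survives).

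The genuine gap is your treatment of $k_{i^*}=3$. You acknowledge it yourself: the reordering fix is left at the level of ``I must verify\ldots\ which reduces to a short matching-style argument,'' which you do not carry out. Moreover, the worry you flag (``several shifted triangles acquiring new $k=2$'') is a misdiagnosis --- the same parity/new-vertex count you used at the start applies to any valid search ordering, so even after the relocation at most one index can have $k\ge 2$. More importantly, though, the case $k_{i^*}=3$ simply cannot occur under the hypothesis, and the paper closes this off directly rather than reordering around it. The reason: if all three boundary edges of $\Delta_{i^*}$ already lie in the graph $G_{i^*-1}=\Delta_1\cup\cdots\cup\Delta_{i^*-1}$, then $\partial\Delta_{i^*}$ is a cycle of $G_{i^*-1}$. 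Since every step $j<i^*$ contributes exactly one new vertex and two new edges, the cycles $\partial\Delta_1,\ldots,\partial\Delta_{i^*-1}$ form a basis of the cycle space of $G_{i^*-1}$ (each has two private new edges). Writing $\partial\Delta_{i^*}=\sum_{j\in S}\partial\Delta_j$ over $\FF_2$ and taking $j_0=\max S$, the two new edges of $\Delta_{j_0}$ appear in no other $\Delta_j$ with $j\in S$, so both must appear in $\Delta_{i^*}$; but two distinct triangles share at most one edge, a contradiction. Replacing your relocation argument with this observation closes the gap and, incidentally, shows the ``matching-style'' cleanup is never needed.

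A smaller point: you should also note that the surviving edge you exhibit for each triangle is never one of the collapse edges $e_j$, $j>i$, which you do correctly handle via $e_j\notin\Delta_1\cup\cdots\cup\Delta_{j-1}\supseteq\Delta_i$; that part of the proposal is sound.
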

\begin{proof}
    We construct a sequence of elementary collapses that establishes the claim. Suppose that in graph-search algorithm on the dual-graph of $C$ the triangles are exposed in the order $\Delta_1,...,\Delta_f$.  The fact that $|C(2)|<|C(0)|$ implies that each step exposes a new vertex, except perhaps one step.
    
    \noindent\textbf{Case 1.} If $|C(0)|=|C(2)|+2$ then in every step $j$, 2 new edges of $\Delta_j$ are exposed (in the first step $3$ new edges are exposed, but we consider only 2 of them). We arbitrarily select one of them, denote it $e_j$, and we denote the other new edge by $\tilde e_j$. We collapse $C$ in the reverse order to which it was exposed, using $e_j$ as the free edge of $\Delta_j$. Note that $\tilde e_j$ has not been collapsed, thus it belongs to the resulting graph $G$, and so $G$ intersects the boundary of every triangle $\Delta_j$ in $C$.

    \noindent\textbf{Case 2.} If $|C(0)|=|C(2)|+1$ then there exists a unique special step $s$ in which no new vertex has been exposed. If $2$ new edges have been exposed in this step, then the same construction in Case 1 works. Otherwise, note that $1$ new edge have been exposed in step $s$. Indeed, otherwise the three boundary edges of $\Delta_s$ belong to the subcomplex with triangles $\Delta_1,...,\Delta_{s-1}$, but this cannot occur since in every step $1\le j <s$ a new vertex is exposed. In such a case, we amend our construction as follows: Start by letting $e_s$ be the unique newly exposed edge in step $s$, and arbitrarily select $\tilde e_s$ from the other edges in the boundary of $\Delta_s$. Then, sequentially, for each $j< s$ we select a newly exposed edge in step $j$ that is not $\tilde e_s$ as $e_j$, and the other newly exposed edge in this step as $\tilde e_j$. This can always be done since there are two new edges and only one edge, $\tilde e_s$, we may need to avoid when choosing $e_j$. For $j>s$, we select $e_j,\tilde e_j$ arbitrarily from the new edges in step $j$. 

    As before, we collapse $C$ in the reverse order to which it was exposed, using $e_j$ as the free edge of $\Delta_j$. This is possible since $e_j$ is a new edge step $j$ for every $1\le j \le f$. In addition, no $\tilde e_j$ has been collapsed, thusthe resulting graph $G$ intersects the boundary of every triangle $\Delta_j$ in $C$.
    \end{proof}

We now give a lower bound for the Cheeger constant of such complexes
\begin{lem}\label{lem:h1_col}
    If $C$ is a strongly-connected $2$-complex with $f$ triangles that is collapsible to a graph that intersects the boundary of every triangle in $C$. Then $h_1(C,\Gamma)\ge 1/f\,.$
\end{lem}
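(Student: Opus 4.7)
The plan is to construct, from any $1$-cochain $\alpha$, a nearby $1$-cocycle $\beta$ by ``undoing'' the collapse: $\beta$ will agree with $\alpha$ on the edges of the residual graph $G$, while on each collapsed edge $e_j$ the value will be re-set so as to enforce the cocycle condition on the associated triangle $\Delta_j$. To set this up, I would fix the exposure order $\Delta_1, \ldots, \Delta_f$ from Claim \ref{clm:simple_implies_collapsible} together with the corresponding free edges $e_1, \ldots, e_f$, and extract two structural properties: (i) $e_j \notin \partial \Delta_i$ for any $i < j$, because $e_j$ was newly exposed in step $j$; and (ii) at most one edge of $\partial \Delta_j$ is of the form $e_k$ with $k < j$. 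Property (ii) is the most delicate point and is where the specific choices in Claim \ref{clm:simple_implies_collapsible} are needed --- in particular, in Case 2 the choice of $\tilde e_s$ among the two shared boundary edges of $\Delta_s$ is exactly what forces this bound; without it, the recursion below would branch and produce blown-up coefficients.

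Given these properties, I would set $\beta_0 := \alpha$ and, for $j = 1, \ldots, f$, define $\beta_j$ to agree with $\beta_{j-1}$ except at $e_j$, where $\beta_j(e_j)$ is the unique element making $\delta\beta_j(\Delta_j) = \Id$. Property (i) guarantees that modifying $e_j$ at step $j$ does not disturb the relations $\delta\beta_{j-1}(\Delta_i) = \Id$ already established for $i < j$; inductively this shows $\beta := \beta_f$ is a genuine $1$-cocycle. Bi-invariance of the metric on $\Gamma$ gives $d(\beta_{j-1}(e_j), \beta_j(e_j)) = \|\delta\beta_{j-1}(\Delta_j)\|$, so by the triangle inequality
\[
d(\alpha, \beta) \;\le\; \sum_{j=1}^{f} \mu_1(e_j)\,\|\delta\beta_{j-1}(\Delta_j)\|.
\]

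Finally, writing $d_l := \|\delta\alpha(\Delta_l)\|$ and $D_j := \|\delta\beta_{j-1}(\Delta_j)\|$, property (ii) combined with bi-invariance yields the linear recursion $D_j \le d_j + D_{k_j}$, where $k_j < j$ is the unique prior index (if any) with $e_{k_j} \in \partial \Delta_j$, and $D_j \le d_j$ otherwise. Unrolling along the strictly decreasing chain $j > k_j > k_{k_j} > \cdots$ gives $D_j \le \sum_{l \in P_j} d_l$ for some subset $P_j \subseteq \{1, \ldots, j\}$ in which each $d_l$ appears at most once. Swapping the order of summation and using $\sum_j \mu_1(e_j) \le 1$ (since $\mu_1$ is a probability measure) together with $\|\delta\alpha\| = f^{-1} \sum_l d_l$ (since $\mu_2$ is uniform on $C(2)$) yields $d(\alpha, \beta) \le \sum_l d_l = f\,\|\delta\alpha\|$, which proves $h_1(C, \Gamma) \ge 1/f$.
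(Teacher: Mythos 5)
Your proof is correct and takes essentially the same route as the paper. Both arguments construct the same cocycle $\beta$ by propagating $\alpha$'s values from the residual graph back along the collapse sequence, and both hinge on the same structural fact---your property (ii), which is equivalent to the paper's hypothesis that the residual graph meets the boundary of every triangle---to keep the recursion linear rather than branching; your bookkeeping via $\beta_0,\ldots,\beta_f$ and the explicit recursion $D_j \le d_j + D_{k_j}$ is simply a cleaner write-up of the paper's ``straightforward induction.''
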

\begin{proof}
    Let $\alpha\in C^1(C,\Gamma)$. Suppose that $C$ admits a sequence of elementary collapses
\[
C=C^{(m)}\searrow C^{(m-1)}\searrow\cdots\searrow C^{(1)}\,,
\]
where $C^{(1)}$ is a graph that intersects the boundary of every triangle in $C$. We use the collapse sequence to inductively construct a $1$-cocycle $\beta$ that is close to $\alpha$. First, for every edge $e\in C^{(1)}$ we set $\beta(e)=\alpha(e)$. For every $1<j\le m$, denote by $e_j=xy,~\Delta_j=xyz$ the edge and triangle that are removed in the elementary collapse $C^{(j)}\searrow C^{(j-1)}\,.$ Assume, by induction, that $\beta(e)$ is already defined for every $e\in C^{(1)}\cup\{e_2,...,e_{j-1}\}$. In particular, $\beta(xz),\beta(yz)$ are already defined, and we define $\beta(xy)=\beta(xz)\beta(zy)$ so that $\delta\beta(\Delta_j)=\Id$. Therefore, by the triangle inequality,
\[
d(\alpha(xy),\beta(xy)) \le 
d(\alpha(xz),\beta(xz))+
d(\alpha(yz),\beta(yz))
+d(\delta\alpha(\Delta_j),\Id).
\]
Since $C^{(1)}$ intersects the boundary of every triangle in $C$, we assume, without loss of generality, that $xz\in C^{(1)},$ hence $\alpha(xz)=\beta(xz)$. Therefore, a straightforward induction gives
\[
d(\alpha(e_j),\beta(e_j))\le \sum_{i \ge j}d(\delta\alpha(\Delta_j),\Id).
\]
In consequence, since $\beta \in Z^1(C,\Gamma)$, we find
\[
d(\alpha,Z^1(C,\Gamma)) \le \sum_{\Delta\in C}d(\delta\alpha(\Delta),\Id) = f\cdot \|\delta\alpha\|\,,
\]
as claimed.
\end{proof}

The following example shows that the lower bound from Lemma \ref{lem:h1_col} is of the right order.

\begin{example}\label{exm:upper}
    Let $C$ be a $2$-complex comprised of a triangle $xyz$ and for each edge of delta, $m$ additional triangles (which we call wings) attahced to it: $xyv_i,xzu_i,yzw_i$ for $i=1,...,m$. In particular, $|C(0)|=3+3m$ and $ |C(2)|=1+3m$. Let $\Gamma$ be a group and $\sigma\in\Gamma$ of distance $\varepsilon>0$ from $\Id$. Let $\alpha\in C^1(C,\Gamma)$ defined by $\alpha(xy)=\alpha(xv_1)=\cdots=\alpha(xv_m)=\sigma$ and $\Id$ elsewhere. Then, $\delta\alpha(xyz)=\sigma$ and $\Id$ for every other triangle. In consequence,
    \(
    \|\delta\alpha\| = \varepsilon/(3m+1)\,.
    \)
    Let $\beta \in Z^1(C,\Gamma)$. Denote by $d_e:=d(\alpha(e),\beta(e))$. Then,
    \begin{equation}\label{eq:dists}
    d(\alpha,\beta) = \frac{m+1}{3(3m+1)}(d_{xy}+d_{yz}+d_{zx}) + \frac{1}{3(3m+1)}\sum_{i=1}^{m}(d_{xv_i}+d_{v_iy}+d_{yw_i}+d_{w_iz}+d_{zu_i}+d_{u_ix})\,.    
    \end{equation}
    Note that  the cycles $xyz$ and $xv_iyw_izu_i$, for every $i=1,...,m$ are all null-homotopic in $C$. Therefore, the product of $\beta(e)$ over the edges of each of these cycles is $\Id$. In addition, the product of $\alpha(e)$ over the edges of each of these cycles is $\sigma$. By the triangle inequality we find the each sum of distances in \eqref{eq:dists} is bounded from below by $d(\sigma,\Id)=\varepsilon$. Therefore,
    \[
    d(\alpha,\beta)\ge \frac{2m+1}{3(3m+1)}\varepsilon.
    \]
    We deduce that $h_1(C,\Gamma)\le  3/(2m+1).$
\end{example}

\subsection{Subcritical Linial-Meshulam Complexes}
Consider $\cY\sim Y(n,p)$, where $p=n^{-(1+\eta)}$ for $\eta>0$. In this regime, all the strongly-connected components of $\cY$ are typically small and structurally simple.
\begin{claim}\label{clm:comp_size}
    Fix $\eta>0$, and let $\cY\sim Y(n,p)$, where $p=n^{-1-\eta}$. Then, a.a.s.\, every strongly-connected component $C$ of $\cY$ satisfies 
    $|C(2)|\le  2/\eta$.
\end{claim}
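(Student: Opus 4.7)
The plan is to apply the first moment method to strongly-connected sub-$2$-complexes of small size. First, I would establish the following reduction: if some strongly-connected component $C$ of $\cY$ has at least $k$ triangles, then $\cY$ contains a strongly-connected sub-$2$-complex with \emph{exactly} $k$ triangles. To see this, take a spanning tree $T$ of the dual graph of $C$, and remove leaves of $T$ one at a time; at every stage the remaining triangles still correspond to a subtree of $T$, so their dual graph is connected and they form a strongly-connected sub-complex. Stopping as soon as exactly $k$ triangles remain produces the desired witness.

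Now set $k:=\lfloor 2/\eta\rfloor+1$, so that $k\eta>2$. I would estimate the expected number $X_k$ of strongly-connected sub-$2$-complexes of $\cY$ with exactly $k$ triangles. As already noted in the excerpt, any strongly-connected $2$-complex with $k$ triangles has at most $k+2$ vertices. Choosing a vertex set of size $v\in\{3,\ldots,k+2\}$ contributes at most $\binom{n}{v}\le n^{k+2}$ choices, and then the number of ways to choose $k$ triangles among the $\binom{v}{3}$ triangles supported on these vertices is bounded by the constant $\binom{\binom{k+2}{3}}{k}$. Each such configuration appears in $\cY$ with probability $p^k$, hence by linearity of expectation
\[
\Ex[X_k]\;\le\;\sum_{v=3}^{k+2}\binom{n}{v}\binom{\binom{v}{3}}{k}p^k\;\le\; c_k\, n^{k+2}\, p^k\;=\;c_k\, n^{2-k\eta},
\]
where $c_k$ depends only on $k$ (hence only on $\eta$). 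Since $k\eta>2$, the right-hand side tends to $0$, so by Markov's inequality $X_k=0$ a.a.s.; combined with the reduction above, a.a.s.\ every strongly-connected component $C$ of $\cY$ satisfies $|C(2)|<k$, i.e., $|C(2)|\le 2/\eta$.

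The argument is a routine first-moment computation in the standard spirit of \textsc{Linial--Meshulam} subcritical analysis, so I do not anticipate a genuine technical obstacle. The only conceptual point that needs to be argued carefully is the reduction from a large strongly-connected component to a bounded-size strongly-connected witness, which is handled by the iterative leaf-removal on a spanning tree of the dual graph described above.
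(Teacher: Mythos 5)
Your proposal is correct and follows essentially the same first-moment/union-bound strategy as the paper. The paper carries out its own version of your reduction implicitly, by running a graph-search on the dual graph of a putative large component and terminating after $f=\lfloor 2/\eta\rfloor+1$ triangles have been exposed; at that moment at most $f+2$ vertices are exposed, so the paper unions over all ways to place $f$ triangles on $\le f+2$ vertices, getting probability $O(n^{f+2-(1+\eta)f})\to 0$. Your leaf-removal on a spanning tree of the dual graph achieves the same reduction (and is a perfectly valid way to produce a strongly-connected witness of the exact target size), and your expectation estimate $\Ex[X_k]\le c_k n^{k+2}p^k=c_k n^{2-k\eta}\to 0$ matches the paper's bound. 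The only substantive difference is cosmetic: the paper does not insist on the bounded-size witness being strongly-connected --- it only needs the weaker event that $\le f+2$ vertices span $f$ triangles --- whereas you produce a strongly-connected witness so that you can invoke the $|C(0)|\le|C(2)|+2$ bound. Both are fine; no gap.
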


\begin{proof}
    Set $f$ to be the smallest integer greater than $2/\eta$. Let $C$ be a strongly-connected complex with $|C(2)|\ge f$. explore its dual graph by some graph-search algorithm. Initially, one triangle and $3$ vertices are exposed. Afterwards, in every step the algorithm exposes one additional triangle and at most one new vertex. Terminate the search when $f$ triangles are exposed, and note that at this point $v\le f+2$ are exposed. Therefore, the event that $\cY$ has a strongly-connected component $C$ with  $|C(2)|\ge   f$ is contained in the event that some $v\le f+2$ vertices in $\cY$ span $f$ triangles. By the union-bound, this occurs with probability at most

    \[
\sum_{3\le v\le f+2}\binom nv\binom{\binom v3}{v}p^f=O\left(n^{f+2-(1+\eta) f}\right)\to 0\,,
    \]
    as $n\to\infty$, as claimed.
\end{proof}

\begin{claim}\label{clm:comp_simple}
    Fix $\eta>0$, and let $\cY\sim Y(n,p)$, where $p=n^{-1-\eta}$. Then, a.a.s.\, every strongly-connected component $C$ of $\cY$ satisfy 
    $|C(0)|>|C(2)|.$
\end{claim}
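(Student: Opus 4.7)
The plan is to follow the same first-moment / union-bound scheme used in the proof of Claim \ref{clm:comp_size}, but tailored to exclude the ``dense'' regime $|C(0)|\le |C(2)|$ rather than the ``too many triangles'' regime. First I would invoke Claim \ref{clm:comp_size} to reduce to strongly-connected components $C$ with $|C(2)|\le 2/\eta$. Combining this with the universal inequality $|C(0)|\le |C(2)|+2$ mentioned just before Claim \ref{clm:simple_implies_collapsible}, the number of vertices in any such $C$ is at most $2/\eta+2$, so I only need to rule out a bounded number of parameter pairs $(v,f)$.

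Next, for each fixed pair $(v,f)$ with $v\le f\le 2/\eta$, the event that $\cY$ contains a strongly-connected component with $v$ vertices and at least $f$ triangles is contained in the event that some $v$ vertices in $[n]$ span at least $f$ triangles of $\cY$. The expected number of such configurations is bounded by
\[
\binom{n}{v}\binom{\binom{v}{3}}{f}p^{f}=O\bigl(n^{v-(1+\eta)f}\bigr).
\]
Since $f\ge v$, this is $O(n^{-\eta v})$, and a union bound over the finitely many eligible $(v,f)$ pairs yields a total probability of $o(1)$.

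The only thing that needs a tiny sanity check is the smallest admissible value of $v$. The constraint $f\ge v$ and $f\le \binom{v}{3}$ already forces $v\ge 4$, because a set of $3$ vertices spans at most one triangle, so the case $v=3,\,f\ge 3$ is vacuous. For $v\ge 4$ the dominant term is $O(n^{-4\eta})\to 0$, which is where the argument essentially lives. The only obstacle I can foresee is purely bookkeeping: making sure the exponent $v-(1+\eta)f$ is bounded away from zero uniformly over the bounded family of $(v,f)$'s, which is immediate from $f\ge v\ge 4$ and $\eta>0$.
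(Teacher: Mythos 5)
Your proposal is correct and follows essentially the same route as the paper's proof: reduce via Claim \ref{clm:comp_size} to components with $|C(2)|\le 2/\eta$, then a first-moment union bound over the finitely many pairs $(v,f)$ with $4\le v\le f\le 2/\eta$ shows the probability that some $v$ vertices span $f$ triangles is $O(n^{v-(1+\eta)f})=O(n^{-4\eta})\to 0$. Your explicit justification that $v\ge 4$ (since $v=3$ forces $f\le 1<v$) is a small but welcome clarification of a point the paper leaves implicit.
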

\begin{proof}
    By the previous claim, we may assume that $|C(2)|\le 2/\eta$. The probability that there exist integers $4\le v\le f \le 2/\eta$, and $v$ vertices that span $f$ triangles in the $\cY$ is at most
    \[
    \sum_{4\le v\le f \le 2/\eta} \binom nv\binom {\binom v3} vp^f = O(n^{-4\eta}) \to 0\,,
    \]
as $n\to\infty$, as claimed.
\end{proof}

These two claims will let us prove a lower bound for $h_1(\cY,\Gamma)$. For the upper bound we will show that $\cY$ a.a.s.\  contains the strongly-connected component that appears in Example \ref{exm:upper}.

\begin{claim}\label{clm:wings_exist}
    Fix $2>\eta>0$, and let $\cY\sim Y(n,p)$, where $p=n^{-1-\eta}$. Then, a.a.s.\, $\cY$ has a strongly-connected component that is isomorphic to the complex $C$ in Example \ref{exm:upper} with $m\ge 0$ wings, provided $3m+1<2/\eta$...0
\end{claim}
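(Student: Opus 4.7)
The plan is to apply the second moment method to $X$, the number of strongly-connected components of $\cY$ that are isomorphic to the complex $C_m$ from Example~\ref{exm:upper}. Recall that $C_m$ has $v:=3+3m$ vertices, $3+6m$ edges, $f:=3m+1$ triangles, and a bounded automorphism group. It suffices to show $\Ex[X]\to\infty$ together with $\mathrm{Var}(X)=o(\Ex[X]^{2})$, whence Chebyshev's inequality yields $\Pro[X=0]\to 0$.

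For the first moment, the number of labeled injective embeddings of $C_m$ into $[n]$ is $\Theta(n^{v})$. All $f$ triangles of a given copy appear in $\cY$ with probability $p^{f}$, and conditional on that event, the copy forms a strongly-connected component of $\cY$ precisely when no triangle of $\cY$ outside the copy shares an edge with one of its triangles. The number of such potentially interfering external triangles is at most $(3+6m)(n-2)=O(n)$, and by independence the isolation probability equals $(1-p)^{O(n)}=e^{-O(np)}=1-o(1)$, since $np=n^{-\eta}\to 0$. Therefore
\[
\Ex[X]=\Theta\bigl(n^{v}p^{f}\bigr)=\Theta\bigl(n^{2-\eta(3m+1)}\bigr)\to\infty,
\]
using the hypothesis $3m+1<2/\eta$.

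For the second moment, I would expand $\Ex[X^{2}]=\sum_{(C_{1}',C_{2}')}\Pro[C_{1}',C_{2}'\text{ are both components}]$ over ordered pairs of labeled copies and partition by overlap type. The key observation is that two distinct strongly-connected components of $\cY$ must be \emph{edge-disjoint}, since any edge shared between triangles of two components would connect them in the dual graph; hence pairs $(C_{1}',C_{2}')$ sharing at least one edge contribute $0$. Pairs sharing exactly $s\ge 1$ vertices but no edges number $O(n^{2v-s})$, and for each such pair the probability that all $2f$ triangles appear is $p^{2f}$ while the joint isolation probability remains $1-o(1)$; summing over $s\ge 1$ yields contribution $O(\Ex[X]^{2}/n)$. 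Vertex-disjoint pairs ($s=0$) are genuinely independent events and contribute $(1-o(1))\Ex[X]^{2}$. Combining, $\Ex[X^{2}]=(1+o(1))\Ex[X]^{2}$, giving $\mathrm{Var}(X)=o(\Ex[X]^{2})$.

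The only real subtlety is the variance step, namely verifying that joint isolation probabilities for overlapping pairs remain $1-o(1)$; this holds because the union of relevant external triangles still has size $O(n)$ and $np=n^{-\eta}\to 0$, so the $(1-p)^{O(n)}$ factor is a negligible deviation from $1$. The cutoff $3m+1<2/\eta$ is exactly what forces $\Ex[X]\to\infty$, consistently with Claim~\ref{clm:comp_size}, which prevents components with more than roughly $2/\eta$ triangles from appearing.
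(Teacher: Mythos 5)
Your proof is correct and uses the same second-moment method as the paper (first moment $\Ex[X]=\Theta(n^{3m+3}p^{3m+1})\to\infty$, then Chebyshev). The only difference is bookkeeping in the variance step: you partition by the number of shared vertices, while the paper avoids the partition entirely by observing that for \emph{any} distinct $i\ne j$ with $\Pro(E_i\cap E_j)>0$ the copies must be triangle-disjoint, so $\Pro(E_i\cap E_j)\le p^{2(3m+1)}$ uniformly; it then absorbs the isolation factor into the bound via $(1-p)^{(6m+3)n}\to 1$. Both routes are valid and of comparable length.
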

\begin{proof}
    There are $N=\binom {n}{3,m,m,m,n-3m-3}$ different subcomplexes that are isomorphic to $C$. Denote them by $C_1,...,C_N$ and let $E_i$ be the event that $C_i$ is a strongly-connected component of $\cY$. Therefore, the expectation of the number $X$ of strongly-connected components isomorphic to $C$ is at least
    \[
    \Ex[X]\ge Np^{3m+1}(1-p)^{(6m+3)n}\,.
    \]
    since, for each $1\le i \le N$, each of the $6m+3$ edges in $C_i$ is contained in  most $n$ triangles, that we need to exclude from $\cY$ so that the $3m+1$ triangles in $C_i$ will form an entire component. In addition, note that $E_i\cap E_j$, for $i\ne j$, can occur  only if $C_i$ and $C_j$ are edge disjoint, since otherwise the corresponding components would have coalesce. In particular $\Pro(E_i\cap E_j)>0$ implies that $C_i,C_j$ and triangle disjoint, and therefore $\Pro(E_i\cap E_j)\le p^{6m+2}.$ We obtain that
    \[
    \Ex[X^2]\le \sum_{i=1}^{N}\Pro(E_i) + N(N-1)p^{6m+2} \le \Ex[X] + (Np^{3m+1})^2\,,
    \]
    By Checbyshev's inequality,
    \[
    \Pro(X=0) \le \frac{\mathrm{Var}(X)}{\Ex[X]^2}\le \frac{1}{\Ex[X]}+(1-p)^{-2(6m+3)n} -1\,.
    \]
    Note that $(1-p)^{(6m+3)n}\ge e^{-2p(6m+3)n}\to 1$ as $n\to\infty$, whence $\Ex[X]=\Omega(n^{2-\eta(3m+1)})\to\infty\,,$ by our assumption on $m$. Therefore, $X>0$ a.a.s., as claimed.
\end{proof}

\begin{proof}[Proof of Theorem \ref{thm:low_range}]
We start with the lower bound. Let $C$ be a strongly-connected componet of $\cY$. By Claims \ref{clm:comp_simple}, we have that a.a.s., $|C(0)|>|C(2)|$. By Claim \ref{clm:simple_implies_collapsible} and Lemma \ref{lem:h1_col} we deduce that $h_1(C,\Gamma)\ge 1/|C(2)|$. Combined with Claim \ref{clm:comp_size}, we find that a.a.s., $h_1(C,\Gamma)\ge \eta/2$ for every  strongly-connected componet $C$ of $\cY$. The lower bound in the theorem is derived by Lemma \ref{lem:h1_min}.

For the upper bound, Lemma \ref{lem:h1_min} asserts that we need to show that a.a.s.\ there exists a strongly-connected component $C$ in $\cY$ such that $h_1(C,\Gamma)\le 9\eta/4+3$, which can be derived from Claim \ref{clm:wings_exist} and Example \ref{exm:upper}. Indeed, if $2>\eta\ge 1/2$ the claim guarantees a strongly-connected component with one triangle whose cocycle Cheeger constant is $3$. And if $\eta < 1/2$ the claim asserts the existence of a strongly-connected component isomorphic to $C$ from the example with $m\ge (2/\eta -1)/3-1$, whence $h_1(C,\Gamma)\le 9\eta/4+3$.
\end{proof}

\bibliographystyle{plain}
\bibliography{Bib}

\end{document}